\newtheorem{theorem}{Theorem}[section]
\newtheorem{lemma}[theorem]{Lemma}
\newtheorem{conjecture}[theorem]{Conjecture}
\theoremstyle{definition}
\newtheorem{remark}[theorem]{Remark}
\theoremstyle{property}
\DeclareFontFamily{OT1}{rsfs}{}
\DeclareFontShape{OT1}{rsfs}{n}{it}{<-> rsfs10}{}
\DeclareMathAlphabet{\curly}{OT1}{rsfs}{n}{it}
\newcommand\I{\mathcal I}
\renewcommand\O{\mathcal O}
\newcommand\PP{\mathbb P}
\newcommand\cE{\mathcal E}
\newcommand\EE{\mathbb E}
\newcommand\F{\mathcal F}
\newcommand\sfL{\mathsf L}
\newcommand\E{\mathbb E}
\newcommand\bfv{{\mathbf v}}
\newcommand\C{\mathbb C}
\newcommand\cC{\mathcal C}
\newcommand\FF{\mathbb F}
\newcommand\sfZ{\mathsf Z}
\newcommand\Q{\mathbb Q}
\newcommand\R{\mathbb R}
\newcommand\Z{\mathbb Z}
\newcommand\cZ{\mathcal Z}
\newcommand\Res{\mathrm{Res}}
\newcommand\inst{\mathrm{inst}}
\newcommand\mono{\mathrm{mono}}
\newcommand\bsa{\boldsymbol{a}}
\newcommand\bsn{\boldsymbol{n}}
\renewcommand\t{\mathfrak t}
\newcommand\SU{\mathrm{SU}}
\newcommand\vd{\mathrm{vd}}
\newcommand\pt{\mathrm{pt}}
\newcommand\vir{\mathrm{vir}}
\newcommand\cob{\operatorname{cob}}
\newcommand\elg{\operatorname{ell}}
\newcommand\SW{\mathrm{SW}}
\newcommand\VW{\mathrm{VW}}
\newcommand\td{\mathrm{td}}
\newcommand\frakT{\mathfrak{T}}
\newcommand\rk{\operatorname{rk}}
\newcommand\tr{\operatorname{tr}}
\newcommand\ch{\operatorname{ch}}
\newcommand\ev{\operatorname{ev}}
\newcommand\Hom{\operatorname{Hom}}
\renewcommand\hom{\mathcal{H}{\it{om}}}
\newcommand\Ext{\operatorname{Ext}}
\newcommand\Pic{\operatorname{Pic}}
\newcommand\Spec{\operatorname{Spec}\,}
\newcommand\Sym{\operatorname{Sym}}
\newcommand\INTO{\ar@{^{(}->}[r]}
\newcommand{\smfr}[2]{\hbox{$\frac{#1}{#2}$}}
\DeclareRobustCommand{\SkipTocEntry}[4]{}
\begin{document}
\title[Sheaves on surfaces and virtual invariants]{Sheaves on surfaces and virtual invariants}
\author[G\"ottsche and Kool]{L.~G\"ottsche and M.~Kool}
\maketitle
\centerline{\emph{Dedicated to Prof.~S.-T.~Yau, on the occasion of his 70th birthday.}}

\begin{abstract}
Moduli spaces of stable sheaves on smooth projective surfaces are in general singular. Nonetheless, they carry a virtual class, which ---in analogy with the classical case of Hilbert schemes of points--- can be used to define intersection numbers, such as virtual Euler characteristics, Verlinde numbers, and Segre numbers. 

We survey a set of recent conjectures by the authors for these numbers with applications to Vafa-Witten theory, $K$-theoretic S-duality, a rank 2 Dijkgraaf-Moore-Verlinde-Verlinde formula, and a virtual Segre-Verlinde correspondence. A key role is played by Mochizuki's formula for descendent Donaldson invariants.
\end{abstract}
\thispagestyle{empty}
\tableofcontents

\section{Introduction}

Hilbert schemes parametrizing closed subschemes of a quasi-projective variety were introduced by A.~Grothendieck \cite{Gro}. The case of 0-dimensional subschemes of an irreducible smooth projective surface $S$ has attracted a lot of attention. The Hilbert scheme $S^{[n]}$, parametrizing 0-dimensional subschemes $Z \subset S$ of length $n$, is irreducible and smooth of dimension $2n$ by a result of J.~Fogarty \cite{Fog}. Particularly notable are H.~Nakajima's operators on the direct sum (over all $n$) of the cohomology of $S^{[n]}$, which make
it into an irreducible representation of the Heisenberg algebra \cite{Nak1, Groj}. We will not survey the vast literature on Hilbert schemes of points on surfaces. Instead, we briefly discuss two invariants, namely their topological Euler characteristics and Segre numbers. \\

\noindent \textbf{Euler characteristics.} In 1990, the first-named author determined the Betti numbers of $S^{[n]}$ \cite{Got1}. The formula specializes to the following expression for the Euler characteristics $e(S^{[n]})$ of $S^{[n]}$ in terms of the Euler characteristic $e(S)$ of $S$
\begin{equation} \label{eulerrk1}
\sum_{n=0}^{\infty} e(S^{[n]}) \, q^n = \prod_{n=1}^{\infty} (1-q^n)^{-e(S)}.
\end{equation}
Up to a factor, this is equal to $\eta(q)^{-e(S)}$, where $\eta(q)$ denotes the Dedekind eta function
$$
\eta(q) = q^{\frac{1}{24} }\prod_{n=1}^{\infty} (1-q^n).
$$
The appearance of a function with ``modular properties'' is related to a symmetry in physics called S-duality \cite{VW}, which we discuss in detail in Sections \ref{sec:VW} and \ref{sec:rk3Sduality}. 

Formula \eqref{eulerrk1} has a beautiful application to enumerative geometry discovered by S.-T.~Yau and E.~Zaslow \cite{YZ}. Let $|L|$ be a ``general'' complete linear system on a K3 surface (containing only irreducible reduced curves, which are at worst nodal cf.~\cite{Che}). Then $|L|$ contains finitely many rational curves. Their number, $n_g$, only depends on the arithmetic genus $g$ of $|L|$ given by 
$
2g-2 = L^2.
$
The famous Yau-Zaslow formula states
$$
\sum_{g=0}^{\infty} n_g \, q^{g-1} = \Delta(q)^{-1},
$$
where $\Delta(q) = \eta(q)^{24}$ is the discriminant modular form. The idea of Yau-Zaslow is to realize $n_g$ as the Euler characteristic of the relative compactified Jacobian $\overline{\mathrm{Jac}}^g(\mathcal{C} / |L|)$ of degree $g$ line bundles on the fibres of the universal curve $\cC \rightarrow |L|$. Since $\overline{\mathrm{Jac}}^g(\mathcal{C} / |L|)$ is birational to $S^{[g]}$, and both are holomorphic symplectic, their Euler characteristics are equal. Formula \eqref{eulerrk1} for $e(S)=24$ yields the result. The influence of the Yau-Zaslow formula on enumerative geometry can be measured by the large number of essentially different proofs \cite{BL, Beau, KMPS, MPT, PT, Tod}. \\

\noindent \textbf{Segre numbers.} Let $L$ be a line bundle on a smooth projective surface $S$. Denote by $\cZ \subset S \times S^{[n]}$ the universal subscheme and consider the projections $p : \cZ \to S$ and $q : \cZ \to S^{[n]}$.
For any line bundle $L$ on $S$, one defines the corresponding tautological vector bundle by $L^{[n]} = q_* p^*L$. The Segre numbers are defined by
\begin{equation} \label{Segrenumber}
\int_{S^{[n]}} s_{2n}(L^{[n]}),
\end{equation}
where $s_{2n}$ denotes the degree $2n$ Segre class. In 1999, M.~Lehn \cite{Leh} conjectured the following remarkable formula 
\begin{equation} \label{lehnconj}
\sum_{n=0}^{\infty} \int_{S^{[n]}} s_{2n}(L^{[n]}) \, z^n = \frac{(1-t)^{L K_S - 2K_S^2} (1-2t)^{(L-K_S)^2+3\chi(\O_S)}}{(1-6t+6t^2)^{\frac{1}{2}L(L-K_S) + \chi(\O_S)}}, 
\end{equation}
where
$$
z = \frac{t(1-t)(1-2t)^4}{(1-6t+6t^2)^3}.
$$
Lehn's conjecture for $K_S$-trivial surfaces was proved by A.~Marian, D.~Oprea, and R.~Pandharipande \cite{MOP1}. The general case was established by the same authors in \cite{MOP2} building on \cite{MOP1} and work of C.~Voisin \cite{Voi}. 

The Segre number \eqref{Segrenumber} has an interesting interpretation in enumerative geometry. For $S \hookrightarrow \PP^{3n-2}$ and $L \cong \O(1)|_S$, \eqref{Segrenumber} counts the number of $(n-2)$-dimensional projective linear subspaces of $\PP^{3n-2}$ that are $n$-secant to $S$. \\

These Euler characteristics and Segre numbers are examples of intersection numbers on $S^{[n]}$. More precisely, both can be expressed in terms of polynomial expressions in Chern classes of tautological bundles $L^{[n]}$ and the holomorphic tangent bundle $T_{S^{[n]}}$. Indeed, by the Poincar\'e-Hopf index theorem
$$
e(S) = \int_{S^{[n]}} c_{2n}(T_{S^{[n]}}).
$$
Now let $P(L)$ be \emph{any} polynomial expression in terms of Chern classes of $L^{[n]}$ and $T_{S^{[n]}}$. Using nested Hilbert schemes, parametrizing $Z_0 \subset Z_1 \subset S$ with $\ell(Z_1 \setminus Z_0) = 1$, the first-named author, Lehn, and G.~Ellingsrud proved that there exists a polynomial $Q \in \Q[x,y,z,w]$, independent of $S$ and $L$, with the following universal property \cite[Thm.~4.1]{EGL}. For any line bundle $L$ on any smooth projective surface $S$, we have
$$
\int_{S^{[n]}} P(L) = Q(L^2, L K_S, K_S^2, \chi(\O_S)).
$$
This result is often the first step in proofs of identities like \eqref{eulerrk1} and \eqref{lehnconj}. For instance, together with the multiplicative nature of total Chern and Segre classes, \eqref{eulerrk1} and \eqref{lehnconj} are determined by two resp.~four universal series. More precisely, there exist $A,B \in 1+q \, \Q[[q]]$ and $W,X,Y,Z \in 1+z \, \Q[[z]]$ such that
\begin{align*}
\sum_{n=0}^{\infty} e(S^{[n]}) \, q^n &= A^{\chi(\O_S)} B^{K_S^2}, \\
\sum_{n=0}^{\infty} \int_{S^{[n]}} s_{2n}(L^{[n]}) \, z^n &= W^{L^2} X^{\chi(\O_S)} Y^{L K_S} Z^{K_S^2}.
\end{align*}
In the first case, $A, B$ are easily determined: evaluate on $S = \PP^2$ and $S = \PP^1 \times \PP^1$, which are toric surfaces with torus $T$. The torus action lifts to $S^{[n]}$ and $e(S^{[n]})$ equals the Euler characteristic of its fixed point locus $(S^{[n]})^T \subset S^{[n]}$. These fixed loci are described by collections of monomial ideals of total colength $n$, so the problem is reduced to Euler's formula for enumerating partitions. In contrast, proving Lehn's conjectural formulae for $W, X, Y, Z$ is much harder and took almost two decades \cite{MOP1, Voi, MOP2}.

\begin{remark}
As an aside, we briefly mention another application of the universality result of \cite{EGL} to enumerative geometry. The generalization of the Yau-Zaslow formula to counting nodal curves of any genus in any (sufficiently ample) complete linear system $|L|$ on any smooth projective surface $S$ is known as the G\"ottsche-Yau-Zaslow formula \cite{Got3}. There are now many proofs of this formula (in algebraic geometry by \cite{Tze, KST, Ren}, see the introduction to \cite{KST} for references to other approaches). In \cite{KST}, the G\"ottsche-Yau-Zaslow formula is expressed in terms of intersection numbers of the form
$$
\int_{S^{[n]} \times \PP} c_n(L^{[n]}) \frac{c(T_{S^{[n]} \times \PP}   )}{ c(L^{[n]} (1))},
$$
where $\PP \subset |L|$ is an appropriate linear subsystem, $c$ denotes total Chern class, and $\O(1)$ is the tautological line bundle on $\PP$. Universality implies that the G\"ottsche-Yau-Zaslow formula is determined by four universal functions.
\end{remark}

\noindent \textbf{Gieseker-Maruyama moduli spaces.} Let $H$ be a polarization on a smooth projective surface $S$ satisfying $b_1(S) = 0$. Let $\rho \in \Z_{>0}$ and choose Chern classes $c_1 \in H^2(S,\Z)$ and $c_2 \in H^4(S,\Z) \cong \Z$. We denote by 
$$
M := M_S^H(\rho,c_1,c_2)
$$
the moduli space of rank $\rho$ Gieseker $H$-semistable torsion free sheaves $E$ on $S$ with $c_1(E) = c_1$ and $c_2(E) = c_2$. These moduli spaces were introduced by D.~Gieseker (surfaces) and M.~Maruyama (arbitrary dimension) \cite{Gie, Mar1, Mar2}, see also \cite{HL, Sim}. Gieseker-Maruyama moduli spaces generalize Hilbert schemes by the isomorphism
$$
S^{[n]} \cong M_S^H(\rho,0,n), \quad Z \mapsto [I_Z],
$$
where $I_Z \subset \O_S$ denotes the ideal sheaf of $Z \subset S$. The moduli space $M$ is a projective scheme and therefore provides an algebro-geometric compactification of the moduli space of rank $\rho$ Gieseker $H$-stable vector bundles on $S$ with Chern classes $c_1$ and $c_2$. 

In order to avoid complicated automorphism groups, we assume that all sheaves $E$ in $M$ are Gieseker $H$-stable. When (i) $K_S  H < 0$ and all sheaves in $M$ are $\mu$-stable or (ii) $K_S = 0$, the obstruction spaces $\Ext^2(E,E)_0$ vanish for all $[E] \in M$ and $M$ is smooth of expected dimension. In the smooth setting, the analog of \eqref{eulerrk1} has been studied in many cases (though mostly for ranks 2 and 3). A selection: \cite{Al1, Al2, Got1, Got2, Got4, GH, Kly, Koo, Man, Moz, Wei, Yos1, Yos2, Yos3}. For $S = \PP^2$, $\rho=2$, $c_1=H$, where $H$ is the class of a line, A.~Klyachko found the following formula using torus localization \cite{Kly}
\begin{equation} \label{Klyformula}
\sum_{c_2} e(M_{\PP^2}^H(2,H,c_2)) \, q^{c_2  - \frac{1}{2}} = 3 \eta(q)^{-6} \sum_{n=1}^{\infty} H(4n-1) q^{n - \frac{1}{4}},
\end{equation}
where $H(\Delta)$ is a Hurwitz class number. More precisely, $H(\Delta)$ denotes the number of (equivalence classes of) positive definite integral binary quadratic forms $AX^2 + BXY + CY^2$ with discriminant $-\Delta = B^2-4AC$ and weighted by the size of its automorphism group. By a result of D.~Zagier \cite{Zag}, this is a mock modular form of weight $-3/2$ in agreement with the S-duality predictions of C.~Vafa and E.~Witten \cite{VW}. In fact, when Vafa and Witten were writing their paper, \cite{Kly, Yos1} provided some of the few higher rank examples of such generating functions in the mathematics literature. 

In this survey, we are interested in smooth projective surfaces with holomorphic 2-form, i.e.~$p_g(S)>0$. Typically, these are surfaces of general type and their Gieseker-Maruyama moduli spaces are singular. (Although, for $c_2 \gg 0$, $M$ is irreducible and generically smooth of expected dimension, see \cite[Ch.~9]{HL} for references.) \\

\noindent \textbf{Virtual invariants.} The fundamental class of $M$ is in general out of reach. However, the moduli space $M$ carries a perfect obstruction theory in the sense of K.~Behrend and B.~Fantechi \cite{BF} or J.~Li and G.~Tian \cite{LT}. For Gieseker-Maruyama moduli spaces on surfaces, this was worked out by T.~Mochizuki \cite{Moc}. Then the virtual tangent bundle is given by
\begin{equation} \label{Tvir}
T_M^\vir = R\pi_* R\hom(\E,\E)_0[1],
\end{equation}
where $\E$ denotes the universal sheaf
on $S \times M$, $\pi : S \times M \rightarrow M$ is the  projection, and $(\cdot)_0$ denotes the trace-free part.\footnote{Although $\E$ may only exist \'etale locally, $R\pi_* R\hom(\E,\E)_0$ exists globally \cite[Thm.~2.2.4]{Cal}, see also \cite[Sect.~10.2]{HL}. Hence we do not need to assume $\E$ exists globally on $S \times M$.}  

This leads to a virtual class of degree equal to the expected dimension of $M$
\begin{equation} \label{vd}
[M]^{\vir} \in H_{2 \vd}(M), \quad \vd:=2rc_2-(r-1)c_1^2-(r^2-1)\chi(\O_S).
\end{equation}
One can now define the virtual Euler characteristic of $M$ by the virtual Poincar\'e-Hopf formula \cite{FG}
$$
e^{\vir}(M) := \int_{[M]^{\vir}} c_{\vd}(T_M^{\vir}).
$$
In Section \ref{sec:Segre:arbrk}, for any line bundle $L$ on $S$, we define the analog of $L^{[n]}$ for the Gieseker-Maruyama moduli space $M$, denoted by $L_M$, and we study the virtual Segre numbers
$$
\int_{[M]^{\vir}} s_{\vd}(L_M).
$$
We present a series of conjectures on virtual Euler characteristics and Segre numbers of Gieseker-Maruyama moduli spaces on arbitrary smooth projective surfaces $S$ satisfying $b_1(S) = 0$ and $p_g(S)>0$. More precisely, we will cover the following topics:
\begin{itemize}
\item \underline{Section \ref{sec:eulervir}}. Conjecture for virtual Euler characteristics of $M$ for rank $\rho=2$. Application to Vafa-Witten theory. Conjecture for virtual Euler characteristics of $M$ for $\rho=3$. Application to S-duality for $\rho=3$.
\item \underline{Section \ref{sec:refine}}. Conjecture for virtual $\chi_y$-genera of $M$ for $\rho=2$ and $3$. Application to $K$-theoretic S-duality conjecture. Conjecture for virtual elliptic genera of $M$ for $\rho=2$ (Dijkgraaf-Moore-Verlinde-Verlinde type formula). Conjecture for virtual cobordism classes of $M$ for $\rho=2$.
\item \underline{Section \ref{sec:verlinde}}. Conjectural Verlinde-type formula for $M$ for $\rho=2$. Application to a Verlinde-type formula for Higgs pairs on surfaces. Conjecture for virtual Verlinde numbers of $M$ in arbitrary rank. Conjecture motivated by virtual Serre duality.
\item \underline{Section \ref{sec:segre}}. Conjecture for virtual Segre numbers of $M$ in arbitrary rank. Application to a virtual Segre-Verlinde correspondence in arbitrary rank. 
\item \underline{Section \ref{sec:struc}}. Mochizuki's formula. Universal function. Verifications of conjectures.
\end{itemize}

\noindent \textbf{Quot schemes.} Instead of virtual invariants of Gieseker-Maruyama moduli spaces on surfaces, one can also consider virtual invariants of Quot schemes on surfaces. This has recently been explored in depth by Oprea-Pandharipande \cite{OP}, see also \cite{JOP,Lim}. This direction is currently attracting a lot of activity and leading to beautiful results. \\

\noindent \textbf{Acknowledgements.} Our work is influenced by many colleagues. We would like to thank A.~Gholampour, Y.~Jiang, T.~Laarakker, J.~Manschot, A.~Marian, H.~Nakajima, D.~Oprea, R.~Pandharipande, A.~Sheshmani, Y.~Tanaka, R.P.~Thomas, R.A.~Williams, S.-T.~Yau, and K.~Yoshioka. M.K~is supported by NWO grant VI.Vidi.192.012.

\section{Virtual Euler characteristics} \label{sec:eulervir}

\subsection{Rank 2} \label{sec:eulervir:rk2}

Let $(S,H)$ be a smooth polarized surface satisfying $b_1(S) = 0$ and $p_g(S)>0$. We denote by $\SW(a)$ the Seiberg-Witten invariant of $S$ in class $a \in H^2(S,\Z)$. Here we use Mochizuki's convention: $\SW(a) = \widetilde{\SW}(2a - K_S)$, where $\widetilde{\SW}(b)$ denotes the usual Seiberg-Witten invariant in class $b \in H^2(S,\Z)$ of differential geometry \cite[Sect.~6.3.2]{Moc}. We refer to $a$ as a Seiberg-Witten basic class when $\SW(a) \neq 0$. The Seiberg-Witten basic classes of $S$ are algebraic and satisfy $a(a-K_S) = 0$, i.e.~the virtual dimension of the linear system $|a|$ is zero. Moreover, Seiberg-Witten invariants satisfy the duality $\SW(a) = (-1)^{\chi(\O_S)} \SW(K_S-a)$. For $S$ minimal of general type, the Seiberg-Witten basic classes are $0$ and $K_S$, and $\SW(K_S) = (-1)^{\chi(\O_S)}$ \cite[Thm.~7.4.1]{Mor}.

We denote the normalized Dedekind eta function by $\overline{\eta}(x) = x^{-\frac{1}{24}} \eta(x)$. In order to formulate our first conjecture, we use the following theta functions
 \begin{align*}
 \theta_2(x) = \sum_{n \in \Z} x^{(n+\frac{1}{2})^2}, \quad \theta_3(x) = \sum_{n \in \Z} x^{n^2}.
\end{align*}

\begin{conjecture}\cite[Conj.~6.7]{GK1} \label{conj:evir:rk2}
Let $(S,H)$ be a smooth polarized surface satisfying $b_1(S) = 0$ and $p_g(S)>0$. Suppose $M:=M_S^H(2,c_1,c_2)$ contains no strictly Gieseker $H$-semistable sheaves. Then $e^{\vir}(M)$ equals the coefficient of $x^{\vd(M)}$ of 
\begin{align*}
4 \Bigg( \frac{1}{2 \overline{\eta}(x^2)^{12}} \Bigg)^{\chi(\O_S)}   \Bigg(\frac{2 \overline{\eta}(x^4)^2}{\theta_3(x)}\Bigg)^{K_{S}^2}  \sum_{a \in H^2(S,\Z)} \SW(a)(-1)^{a c_1} \Bigg(\frac{\theta_3(x)}{\theta_3(-x)}\Bigg)^{a K_S}.
\end{align*}
\end{conjecture}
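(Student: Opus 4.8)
The plan is to reduce this rank $2$ virtual invariant to rank $1$ data on Hilbert schemes of points, weighted by Seiberg-Witten invariants, via Mochizuki's formula (the tool highlighted in Section~\ref{sec:struc}). I would start from the definition $e^{\vir}(M) = \int_{[M]^{\vir}} c_{\vd}(T_M^{\vir})$, feeding the total Chern class of the virtual tangent bundle \eqref{Tvir} into Mochizuki's formula as the descendent insertion. Mochizuki's formula expresses such a virtual integral as a finite sum over splittings $c_1 = a_1 + a_2$ (equivalently over the class $a$ appearing in the conjecture), where each summand is a residue/contour integral of a tautological expression over a product of Hilbert schemes $S^{[n_1]} \times S^{[n_2]}$, and where the invariants $\SW(a)$ enter as the coefficients coupling the two factors. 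The assumption $p_g(S)>0$ is precisely what rigidifies the Seiberg-Witten side, so that only finitely many basic classes $a$ contribute and the sum over $a \in H^2(S,\Z)$ in the conjecture is effectively finite.

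Second, I would invoke universality in the style of \cite{EGL}: each Hilbert-scheme integral arising above is a polynomial in the intersection data, and, after assembling $e^{\vir}(M_S^H(2,c_1,c_2))$ for all $c_2$ into a single generating series in $x$ tracking the power $x^{\vd}$, the output is governed by finitely many universal power series whose exponents are $\chi(\O_S)$, $K_S^2$, and $a K_S$, with $c_1$ entering only through the sign $(-1)^{a c_1}$ and $c_1^2$ entering only through the coefficient $x^{\vd}$ that one finally extracts. In other words, universality forces the generating function into the shape $4\, A^{\chi(\O_S)} B^{K_S^2} \sum_{a} \SW(a)(-1)^{a c_1} D^{a K_S}$ for universal series $A, B, D \in 1 + x\,\Q[[x]]$ and a normalization constant, which already matches the structure of the conjectured expression. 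The content of the conjecture is then the identification $A = \tfrac{1}{2\overline{\eta}(x^2)^{12}}$, $B = \tfrac{2\overline{\eta}(x^4)^2}{\theta_3(x)}$, $D = \tfrac{\theta_3(x)}{\theta_3(-x)}$, and the prefactor $4$.

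The hard part is pinning down these universal series in closed form. I would determine them by evaluating the left-hand side on a spread of test surfaces with known Seiberg-Witten data and intersection numbers: minimal surfaces of general type, where the basic classes are exactly $0$ and $K_S$ with $\SW(K_S) = (-1)^{\chi(\O_S)}$, give independent constraints as $\chi(\O_S)$ and $K_S^2$ vary, while elliptic surfaces and products of curves furnish further linear conditions and fix the $a K_S$-series $D$. Matching finitely many low-order coefficients in $x$ overdetermines $A, B, D$ and the constant, after which the S-duality heuristic of Vafa-Witten \cite{VW} supplies the modular closed form as an educated guess. The remaining obstacle---and the reason this is stated as a conjecture rather than a theorem---is twofold: Mochizuki's formula rigorously computes the virtual integral only under hypotheses on the polarization and on the absence of strictly semistable sheaves, and, more seriously, turning a finite coefficient match into an all-order identity requires either an a priori modularity theorem for the generating series or an exact evaluation of the Hilbert-scheme residue integrals in every degree, neither of which is currently available in the generality of arbitrary $S$ with $p_g(S)>0$.
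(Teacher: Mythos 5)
Your overall strategy --- Mochizuki's formula to reduce $e^{\vir}(M)$ to Seiberg--Witten--weighted integrals over products of Hilbert schemes, followed by EGL-style universality and explicit determination of finitely many universal series, with the honest admission that only finitely many coefficients can be matched --- is indeed the route the paper takes in Section \ref{sec:struc}. But two of your steps are misdescribed in ways that would break the argument. First, universality does \emph{not} force the clean shape $4\,A^{\chi(\O_S)}B^{K_S^2}\sum_a \SW(a)(-1)^{ac_1}D^{aK_S}$. What the paper's universality result (Theorem \ref{thm:univ}) actually yields is $\Res_t$ of a sum over splittings $c_1=a_1+a_2$ with $a_1H\le a_2H$ of an explicit prefactor times \emph{seven} universal two-variable series raised to the powers $a_1^2$, $a_1a_2$, $a_2^2$, $a_1K_S$, $a_2K_S$, $K_S^2$, $\chi(\O_S)$. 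Since $a_2=c_1-a_1$, the exponents $a_1a_2$ and $a_2^2$ carry full dependence on $a_1c_1$ and $c_1^2$, and the residue in the auxiliary equivariant variable $t$ is an essential operation your sketch omits entirely. That after taking this residue everything collapses to three one-variable modular series, with $a_1c_1$ surviving only as a sign, is part of the \emph{content} of the conjecture, not a consequence of universality.

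Second, your method for pinning down the universal series --- evaluating the left-hand side on minimal surfaces of general type and elliptic surfaces --- is circular: for such surfaces there is no independent computation of $e^{\vir}(M)$; those numbers are precisely what one is trying to predict. The paper instead determines the universal series from the Hilbert-scheme generating functions themselves, evaluated by Atiyah--Bott localization on the toric surfaces $\PP^2$ and $\PP^1\times\PP^1$ with seven suitable pairs of $T$-invariant divisors (Section \ref{sec:toric}). These surfaces violate $p_g>0$, but the universal series are attached to Hilbert-scheme integrals and hence defined for arbitrary surfaces and arbitrary divisor pairs; the surfaces with $p_g>0$ enter only at the verification stage. Finally, the one family where the conjecture is actually proved --- K3 surfaces, via deformation equivalence of $M$ to $S^{[\vd/2]}$ and formula \eqref{eulerrk1} --- uses a mechanism absent from your proposal.
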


For any smooth polarized surface $(S,H)$ satisfying $b_1(S)=0$, $\rho > 0$, and $c_1 \in H^2(S,\Z)$ such that $M_S^H(\rho,c_1,c_2)$ does not contain strictly Gieseker $H$-semistable sheaves for any $c_2$, we define the following generating function
\begin{equation*} \label{instantongenfun}
\sfZ_{S,H,\rho,c_1}^{\inst}(q) := \, q^{-\frac{1}{2\rho}\chi(\O_S) + \frac{\rho}{24} K_S^2} \sum_{c_2} e^{\vir}(M_S^H(\rho,c_1,c_2)) \, q^{\frac{\vd}{2\rho}},
\end{equation*}
where $\vd = \vd(M_S^H(\rho,c_1,c_2))$ is given by \eqref{vd}. With this normalization, it is not hard to show that Conjecture \ref{conj:evir:rk2} implies the following formula \cite[Eqn.~(29)]{GK1}
\begin{align*}
&\sfZ_{S,H,2,c_1}^{\inst}(q) =2\Bigg( \frac{1}{2 \Delta(q^{\frac{1}{2}})^{\frac{1}{2}}  } \Bigg)^{\chi(\O_S)} \Bigg( \frac{\theta_3(q)+\theta_2(q)}{2\eta(q)^2} \Bigg)^{-K_S^2} \sum_{a \in H^2(S,\Z)} \SW(a) \, (-1)^{ac_1} \, \Bigg( \frac{\theta_3(q)+\theta_2(q)}{\theta_3(q)-\theta_2(q)} \Bigg)^{aK_S} \\
&+2 i^{c_1^2} \Bigg(  \frac{1}{2\Delta(-q^{\frac{1}{2}})^{\frac{1}{2}}} \Bigg)^{\chi(\O_S)} \Bigg( \frac{\theta_3(q)+i\theta_2(q)}{2\eta(q)^2}\Bigg)^{-K_S^2} \sum_{a \in H^2(S,\Z)} \SW(a) \, (-1)^{ac_1} \, \Bigg( \frac{\theta_3(q)+i\theta_2(q)}{\theta_3(q)-i\theta_2(q)} \Bigg)^{aK_S},
\end{align*}
where $i = \sqrt{-1}$. In particular, the right-hand-side is independent of the polarization $H$ and only depends on $c_1$ modulo $2H^2(S,\Z)$.

\begin{remark} 
For surfaces with Seiberg-Witten basic classes $0$ and  $K_S \neq 0$, our conjecture for $\sfZ_{S,H,2,c_1}^{\inst}(q)$ coincides with line 2 of \cite[(5.38)]{VW}, i.e.~part of the contribution to the $\SU(2)$ Vafa-Witten partition function, which we discuss in Section \ref{sec:VW}. For arbitrary smooth polarized surfaces $(S,H)$ satisfying $b_1(S) = 0$ and $p_g(S) >0$, it coincides with terms two and three in \cite[Eqn.~(6.1)]{DPS} by R.~Dijkgraaf, J.-S.~Park, and B.J.~Schroers. As we will see in Section \ref{sec:VW}, our conjecture coincides with the instanton part of the $\SU(2)$ Vafa-Witten partition function.
\end{remark}

\begin{remark}
Conjecture \ref{conj:evir:rk2} implies a blow-up formula for virtual Euler characteristics \cite[Prop.~6.9]{GK1}. Surprisingly, it is identical to the blow-up formula for topological Euler characteristics derived in \cite[Prop.~3.1]{Got4}, \cite{Yos1}. Our conjectures for rank 3 virtual Euler characteristics and rank 2 and 3 virtual $\chi_{y}$-genera, discussed later in this survey, yield blow-up formulae which are also identical to those of their ``motivic counterparts''. More precisely, for the case of virtual $\chi_y$-genera (virtual in the sense of virtual classes), we get the same blow-up formula as the one for ``motivic'' $\chi_y$-genera which follows from the work of W.-P.~Li and Z.~Qin \cite{LQ1, LQ2}. Proving the blow-up formula for virtual Euler characteristics and virtual $\chi_y$-genera is an interesting open problem.
\end{remark}

When $S$ is a K3 surface and assuming ``stable equals semistable'', $M_S^H(\rho, c_1, c_2)$ is smooth of expected dimension $\vd$ and deformation equivalent to $S^{[\vd/2]}$ \cite{OG, Huy, Yos4}. Therefore Conjecture \ref{conj:evir:rk2} holds for K3 surfaces by \eqref{eulerrk1}. In addition, consider the following list of surfaces: \\

\noindent elliptic surfaces of type\footnote{An elliptic surface of type $E(n)$ is an elliptic surface $S \rightarrow \PP^1$ with section, $12n$ rational 1-nodal fibres, and no further singular fibres.} $E(n)$ with $n \in \{3,4,5,6,7,8\}$, blow-ups of a K3 surface in one or two points, double covers of $\PP^2$ branched along a smooth curve of degree $2d$ with $d \in\{4,5,6,7\}$, certain double covers of $\PP^1 \times \PP^1$ and the Hirzebruch surfaces $\FF_1, \FF_2, \FF_3$ \cite[Sect.~7.4]{GK1}, smooth quintics and sextics in $\PP^3$, smooth surfaces of bidegree $(4,3)$, $(5,3)$, $(6,3)$, $(4,4)$, $(5,4)$, $(4,5)$ in $\PP^2 \times \PP^1$, smooth surfaces of tridegree $(3,3,3)$, $(3,3,4)$, $(3,3,5)$, $(3,4,4)$ in $\PP^1 \times \PP^1 \times \PP^1$, smooth complete intersections of hypersurfaces of degrees 2 and 4, or 2 and 5, or 3 and 3, or 3 and 4 in $\PP^4$, smooth complete intersections of hypersurfaces of degrees 2 and 2 and 3 in $\PP^5$. \\

In each of these cases, and for certain values of $c_1$, we verified Conjecture \ref{conj:evir:rk2} for Gieseker-Maruyama moduli spaces up to high virtual dimension. The precise upper bound on virtual dimension, up to which we verified Conjecture \ref{conj:evir:rk2}, depends on the case and usually lies between $25$ and $70$. 

These verifications rely on a certain universal function (Theorem \ref{thm:univ}), which we derived from Mochizuki's formula as described in detail in Section \ref{sec:struc}.

\subsection{Application: Vafa-Witten invariants} \label{sec:VW}

In 1994, Vafa-Witten proposed new tests for S-duality of $N=4$ supersymmetric Yang-Mills theory on a real 4-manifold $M$ \cite{VW}. This theory involves a gauge group, denoted by $G$, and coupling constants $\theta$ and $g$ grouped into a complex parameter
$$
\tau := \frac{\theta}{2 \pi} + \frac{4 \pi i}{g^2}. 
$$
Suppose $M$ underlies a complex smooth projective surface $S$ and $G$ equals $\SU(\rho)$ or its Langlands dual $\SU(\rho) / \Z_\rho$. After topological twisting, Vafa-Witten argued that S-duality implies that the partition functions satisfy
\begin{equation} \label{Sdualityweak}
\sfZ_{\SU(\rho)}(-1/\tau) =  (-1)^{(\rho-1)\chi(\O_S)} \Big( \frac{\rho \tau}{i} \Big)^{-\frac{e(S)}{2}} \sfZ_{\SU(\rho) / \Z_\rho}(\tau).
\end{equation}
Roughly speaking: the theory for gauge group $\SU(\rho)$ and ``strong coupling $-1/\tau$'' is equivalent to the theory for Langlands dual gauge group $\SU(\rho) / \Z_\rho$ at ``weak coupling $\tau$''.
 Referring in parts to the mathematics literature \cite{Kly, Yos1, Yos2, Nak2, Nak3}, Vafa-Witten performed some non-trivial modularity checks for $S = \PP^2$ (using \eqref{Klyformula}), K3, blow-ups, and ALE spaces (mostly for rank $\rho=2$). 

In \cite[Sect.~5]{VW}, using superconducting cosmic strings, Vafa-Witten predicted a formula for the partition function, when $S$ is a smooth projective surface having a connected smooth curve in $|K_S|$. Their formula was generalized to arbitrary smooth projective surfaces $S$ satisfying $p_g(S)>0$ in \cite{DPS}. At the time, there existed no mathematical verifications, or even a definition, of the Vafa-Witten partition function for this setting. For $S$ any smooth projective surface satisfying $H_1(S,\Z) = 0$ and $p_g(S)>0$, and arbitrary $c_1$, the formula predicted by physics is as follows \cite[Eqn.~(6.1)]{DPS}:
\begin{align}
\begin{split} \label{(5.38)}
&\sfZ_{S,H,2,c_1}(q) = \Bigg( \frac{1}{2\Delta(q^2)^{\frac{1}{2}}} \Bigg)^{\chi(\O_S)} \Bigg( \frac{\theta_3(q)}{\eta(q)^2} \Bigg)^{-K_S^2} (-1)^{\chi(\O_S)} \sum_{a \in H^2(S,\Z)} \SW(a) \, \delta_{a, c_1} \,  \Bigg( \frac{\theta_3(q)}{\theta_2(q)} \Bigg)^{a K_S} \\
&+2\Bigg( \frac{1}{2 \Delta(q^{\frac{1}{2}})^{\frac{1}{2}}  } \Bigg)^{\chi(\O_S)} \Bigg( \frac{\theta_3(q)+\theta_2(q)}{2\eta(q)^2} \Bigg)^{-K_S^2} \sum_{a \in H^2(S,\Z)} \SW(a) \, (-1)^{ac_1} \, \Bigg( \frac{\theta_3(q)+\theta_2(q)}{\theta_3(q)-\theta_2(q)} \Bigg)^{aK_S} \\
&+2 i^{c_1^2} \Bigg(  \frac{1}{2\Delta(-q^{\frac{1}{2}})^{\frac{1}{2}}} \Bigg)^{\chi(\O_S)} \Bigg( \frac{\theta_3(q)+i\theta_2(q)}{2\eta(q)^2}\Bigg)^{-K_S^2} \sum_{a \in H^2(S,\Z)} \SW(a) \, (-1)^{ac_1} \, \Bigg( \frac{\theta_3(q)+i\theta_2(q)}{\theta_3(q)-i\theta_2(q)} \Bigg)^{aK_S},
\end{split}
\end{align}
where $q = e^{2\pi i \tau}$, $\tau \in \mathfrak{H}$ (the upper half plane), and for any $a,b \in H^2(S,\Z)$
\begin{equation*} \label{deltadef}
\delta_{a,b} := \left\{\begin{array}{cc} 1 & \qquad \textrm{if \ } a-b \in 2 H^2(S,\Z)  \\ 0 & \qquad \textrm{otherwise}.  \end{array}\right.
\end{equation*}
Terms two and three of \eqref{(5.38)} coincide with our 
conjecture for virtual Euler characteristics $\sfZ_{S,H,2,c_1}^{\inst}(q)$ of the previous section. In Section \ref{sec:rk3Sduality}, we discuss in which sense \eqref{(5.38)} satisfies the S-duality transformation \eqref{Sdualityweak}. In this section, we focus on equation \eqref{(5.38)} itself.

Around the time we were working on \cite{GK1}, Y.~Tanaka and R.P.~Thomas \cite{TT1} discovered the mathematical definition of $\SU(\rho)$ Vafa-Witten invariants using a symmetric perfect obstruction theory on the moduli space of Higgs pairs on $S$. Also around that time, A.~Gholampour, A.~Sheshmani and S.-T.~Yau \cite{GSY2} were studying certain reduced Donaldson-Thomas invariants of the non-compact Calabi-Yau threefold $X = \mathrm{Tot}(K_S)$, which (up to an equivariant parameter) are equal to Tanaka-Thomas's invariants. We briefly describe both works. 

Let $(S,H)$ be a smooth polarized surface and $L \in \Pic(S)$. Tanaka-Thomas \cite{TT1} consider the moduli space of isomorphism classes of $H$-semistable Higgs pairs
$$
N^{\perp} := N_S^H(\rho,L,c_2) = \big\{ [(E,\phi)] \, : \, \det E \cong L, \ \tr \phi = 0, \ c_2(E) =c_2 \big\},
$$
where $E$ is a rank $\rho$ torsion free sheaf, $\phi : E \rightarrow E \otimes K_S$ is a trace-free morphism, and the pair $(E,\phi)$ satisfies a (Gieseker) semistability condition with respect to $H$. Assuming ``stable equals semistable'', Tanaka-Thomas show that $N^{\perp}$ admits a symmetric perfect obstruction theory (symmetric in the sense of Behrend \cite{Beh}). The $\C^*$-scaling action on the Higgs field lifts to $N^\perp$. Although $N^\perp$ is not proper, its fixed locus $(N^\perp)^{\C^*}$ is proper and Tanaka-Thomas define $\SU(\rho)$ Vafa-Witten invariants by 
\begin{equation} \label{defloc}
\VW_S^H(\rho,L,c_2) := \int_{[N_S^H(\rho,L,c_2)^{\C^*}]^{\vir}} \frac{1}{e(N^\vir)} \in \Q,
\end{equation}
which is the virtual localization formula of T.~Graber and Pandharipande \cite{GP}. In particular, $e(N^{\vir})$ denotes the equivariant Euler class of the virtual normal bundle to $(N^\perp)^{\C^*}$. There are two types of components of $(N^\perp)^{\C^*}$. Higgs pairs with $\phi = 0$ form a component isomorphic to the Gieseker-Maruyama moduli space $M:=M_S^H(\rho,L,c_2)$ (the instanton branch). Tanaka-Thomas show that the contribution of $M$ to \eqref{defloc} is
$$
(-1)^{\vd(M)} e^\vir(M) \in \Z.
$$ 
We refer to the other components of $(N^\perp)^{\C^*}$ as the monopole branch. When (i) $K_S  H < 0$ and all sheaves in $M$ are $\mu$-stable or (ii) $K_S = 0$, 
there are no contributions from the monopole branch to \eqref{defloc}, $M$ is smooth of expected dimension, and $e^{\vir}(M) = e(M)$ \cite[Prop.~7.4]{TT1}. For surfaces containing a connected smooth canonical curve, Tanaka-Thomas calculated the contribution of the monopole branch for $\rho=2$ and $c_2 \leq 3$, and obtained a match with the first term of \eqref{(5.38)}. Together with Conjecture \ref{conj:evir:rk2}, this provides compelling evidence that Tanaka-Thomas found the right mathematical definition of the $\SU(\rho)$ Vafa-Witten generating function, i.e.
\begin{equation} \label{mathdefVW}
\sfZ_{S,H,\rho,c_1}(q) := q^{-\frac{1}{2\rho}\chi(\O_S) + \frac{\rho}{24} K_S^2} \sum_{c_2} (-1)^{\vd} \, \VW_S^H(\rho,c_1,c_2) \, q^{\frac{\vd}{2\rho}},
\end{equation}
where $\vd := 2\rho c_2-(\rho-1)c_1^2-(\rho^2-1)\chi(\O_S)$.

\begin{remark}
Initially, Tanaka-Thomas proposed two candidate definitions for $\SU(\rho)$ Vafa-Witten invariants \cite{TT1}. Their second definition is by integrating Behrend's constructible function over $N^\perp$. Since $N^\perp$ is non-proper, this definition is in general not equal to the above definition using virtual classes (and in fact produces the ``wrong'' numbers from the point of view of physics). As recounted in the introduction of \cite{TT1}, Conjecture \ref{conj:evir:rk2} played a crucial role in the realization that definition \eqref{defloc} is the correct one.
\end{remark}

The components of the Higgs moduli space $(N^\perp)^{\C^*}$ can be indexed by the ranks of the eigensheaves 
$$
E = \bigoplus_i E_i \otimes \mathfrak{t}^{-i}
$$
of $[(E,\phi)] \in (N^\perp)^{\C^*}$, where $\mathfrak{t}$ denotes a degree one character of $\C^*$. The following theorem of T.~Laarakker \cite{Laa1} deals with the components indexed by eigenrank $(1, \ldots, 1)$.
\begin{theorem}[Laarakker] \label{thm:Laa}
Fix $\rho>1$. Then there exist $A,C_{ij} \in \Q(\!(q^{\frac{1}{2\rho}})\!)$, for all $1 \leq i \leq j \leq \rho-1$, and $B \in q^{\frac{\rho}{24}} \, \Q(\!(q^{\frac{1}{2\rho}})\!)$ with the following property.\footnote{We suppress the dependence of these universal functions on $\rho$.} Let $(S,H)$ be a smooth polarized surface satisfying $H_1(S,\Z) = 0$ and $p_g(S)>0$.\footnote{After normalizing by the order of the $\rho$-torsion subgroup of $H^2(S,\Z)$, Laarakker's result holds without the condition $H_1(S,\Z)=0$ \cite{Laa1}.} Suppose $H, \rho, c_1$ are chosen such that $N_S^H(\rho,c_1,c_2)$ does not contain strictly Gieseker $H$-semistable Higgs pairs for any $c_2$. Then the contribution of Higgs pairs with eigenrank $(1, \ldots, 1)$ to $\sfZ_{S,H,\rho,c_1}(q)$ is given by
$$
A^{\chi(\O_S)} B^{K_S^2} \sum_{(a_1, \ldots, a_{\rho-1})} \prod_{i=1}^{\rho-1} \SW(a_i) \prod_{1 \leq i \leq j \leq \rho-1} C_{ij}^{a_i a_j},
$$
where the sum is over all $(a_1, \ldots, a_{\rho-1}) \in H^2(S,\Z)^{\rho-1}$ satisfying
$$
c_1 - \sum_{i=1}^{\rho-1} i a_i \in \rho H^2(S,\Z).
$$
\end{theorem}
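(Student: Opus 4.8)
The plan is to compute the eigenrank $(1,\dots,1)$ contribution in three movements: first describe the associated $\C^*$-fixed locus explicitly, then apply the virtual localization formula \eqref{defloc}, and finally extract the universal structure by an Ellingsrud-G\"ottsche-Lehn (EGL) type argument. First I would analyze the fixed locus. A fixed Higgs pair decomposes as $E = \bigoplus_i E_i \otimes \t^{-i}$ with $\phi$ shifting the grading, so that each $\phi_i \colon E_i \to E_{i-1} \otimes K_S$ is a section of $E_i^\vee \otimes E_{i-1} \otimes K_S$. Since every $E_i$ has rank one and $b_1(S)=0$, I can write $E_i = I_{Z_i} \otimes L_i$ for a zero-dimensional $Z_i \subset S$ and a line bundle $L_i$ determined up to the discrete datum $\beta_i := c_1(E_i)$. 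Setting $a_i := \beta_{i-1} - \beta_i$ (up to the $K_S$-twist carried by $\phi_i$), a telescoping computation gives $c_1 - \sum_{i=1}^{\rho-1} i\,a_i = \rho\,\beta_{\rho-1} \in \rho H^2(S,\Z)$, which is exactly the stated congruence. Each $\phi_i$ then vanishes on a divisor in $|a_i|$ subject to an incidence condition $s_i I_{Z_i} \subseteq I_{Z_{i-1}}$, so the fixed locus is, for each $(a_1,\dots,a_{\rho-1})$ and each total $c_2$, a tower of nested Hilbert schemes of points fibred over the linear systems $|a_i|$.

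Next I would decompose Mochizuki's virtual tangent complex $R\pi_* R\hom(\E,\E)_0[1]$ under the $\C^*$-action into fixed and moving parts. The fixed part furnishes the induced perfect obstruction theory on the fixed locus, while the moving part assembles into the virtual normal bundle $N^{\vir}$ whose equivariant Euler class appears in \eqref{defloc}. The point is that both $c_{\vd}$ of the fixed virtual tangent bundle and $e(N^{\vir})$ are built from the $\Ext$-groups between the tautological sheaves $I_{Z_i} \otimes L_i$ on $S \times (\text{fixed locus})$; Carlsson-Okounkov-style computations of these $\Ext$-groups express the entire integrand in terms of Chern classes of the tautological bundles on the nested Hilbert schemes together with the classes $L_i$ and $K_S$.

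The universality step is then an EGL argument adapted to this nested setting: integrals of such tautological expressions, summed over $c_2$, are universal power series in the basic intersection numbers $L_i^2$, $L_iL_j$, $L_iK_S$, $K_S^2$, and $\chi(\O_S)$. Multiplicativity of total Chern and Segre classes separates the $\chi(\O_S)$- and $K_S^2$-dependence into the global prefactor $A^{\chi(\O_S)} B^{K_S^2}$, and isolates the pairwise $a_ia_j$-dependence into the factors $C_{ij}^{a_ia_j}$. The remaining sum over the discrete Chern-class data collapses onto the Seiberg-Witten basic classes: each rank-one eigensheaf contributes through the (virtual) geometry of a linear system $|a_i|$ whose expected dimension is forced to vanish, exactly the condition $a_i(a_i - K_S)=0$ characterizing basic classes, so summing the $i$-th factor over all admissible $a_i$ reproduces $\SW(a_i)$ and yields the product $\prod_i \SW(a_i)$.

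The main obstacle I expect is this last identification, namely showing that the per-factor sum over the discrete data equals $\SW(a_i)$, together with disentangling the off-diagonal interactions $C_{ij}^{a_ia_j}$ cleanly from the single-sheaf (diagonal) contributions. This demands a careful analysis of how the cross-terms in $e(N^{\vir})$ factor across distinct eigensheaves, and a matching of the reduced obstruction theory of the linear systems $|a_i|$ with the standard algebro-geometric description of Seiberg-Witten invariants for surfaces with $p_g(S)>0$. Maintaining consistent bookkeeping of the nested incidence structure and the $\t$-weights through the localization is where the genuine work lies; the EGL universality itself, while technical, is essentially structural once the integrand is written in tautological form.
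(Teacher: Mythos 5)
Your overall strategy --- describe the eigenrank $(1,\ldots,1)$ fixed components as nested Hilbert schemes of points and divisors, localize, and run an EGL-type universality argument --- is exactly the route of the cited proof (Laarakker, via Gholampour--Thomas), and your identification of where the Seiberg--Witten invariants enter (the virtual class of $\mathrm{Hilb}^{a_i}(S)$, whose degree is $\SW(a_i)$ and which vanishes unless $a_i(a_i-K_S)=0$) is the right mechanism. There is, however, one concrete error in your second movement: the complex you propose to split into fixed and moving parts, $R\pi_* R\hom(\E,\E)_0[1]$, is the obstruction theory of the Gieseker--Maruyama space $M$, not of the Higgs moduli space $N^\perp$. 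The localization formula \eqref{defloc} is taken with respect to the Tanaka--Thomas symmetric obstruction theory, whose virtual tangent complex is built from $R\hom$ of the two-term Higgs complex $[E \to E\otimes K_S \otimes \t]$ with itself (trace-free part). On the vertical components the extra terms $\Ext^\bull(E_i, E_j\otimes K_S)$ with their $\t$-weights --- deformations and obstructions of the Higgs field --- are absent from $R\hom(E,E)_0$ and are essential: they are what make the theory symmetric ($N^{\vir}\cong (T^{\vir})^\vee\otimes\t$), and dropping them yields the wrong virtual normal bundle and hence the wrong contribution. (On the instanton branch $\phi=0$ the fixed part does reduce to Mochizuki's complex, which may be the source of the slip.)

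A second, smaller point: the induced virtual class on a vertical component is not simply the product of the natural classes of the $S^{[n_i]}$ and the $|a_i|$; identifying it with the Gholampour--Thomas virtual cycle of the nested Hilbert scheme $S^{[n_0,\ldots,n_{\rho-1}]}_{a_1,\ldots,a_{\rho-1}}$, and then pushing that cycle down to a product of ordinary Hilbert schemes of points via their degeneracy-locus (Carlsson--Okounkov) formula, is the substantive input that makes the EGL step applicable, since universality is established for products of Hilbert schemes of points, not for nested ones. You gesture at this with ``Carlsson--Okounkov-style computations,'' but it is the load-bearing ingredient rather than routine bookkeeping. With the correct obstruction theory and this reduction in place, the rest of your outline (the congruence from telescoping the $c_1(E_i)$, the multiplicative separation into $A^{\chi(\O_S)}B^{K_S^2}\prod C_{ij}^{a_ia_j}$) goes through as you describe.
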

The proof of Laarakker's theorem relies on a beautiful description, by Gholampour-Thomas \cite{GT1, GT2}, of the components of $(N^\perp)^{\C^*}$ indexed by $(1, \ldots, 1)$ in terms of nested Hilbert schemes. Consequently, the universal functions $A(q),B(q),C_{ij}(q)$ can be expressed in terms of intersection numbers on products of Hilbert schemes of points on $S$. These can be determined (up to some order in $q$) by toric calculations similar to ours discussed in Section \ref{sec:toric}. As an application, Laarakker calculated the first 15 non-zero terms of the monopole contribution to $\sfZ_{S,H,2,c_1}(q)$ and found agreement with \eqref{(5.38)}.
In Section \ref{sec:rk3Sduality}, we discuss an application of Theorem \ref{thm:Laa} to $\mathrm{SU}(3)$ Vafa-Witten invariants.

\begin{remark}
In \cite{TT2}, Tanaka-Thomas removed the ``stable equals semistable'' assumption by using Joyce-Song pairs. Using their definition of generalized Vafa-Witten invariants, Laarakker \cite{Laa2} showed that the ``stable equals semistable'' condition can be dropped from Theorem \ref{thm:Laa}, as expected from physics predictions.
\end{remark}

As mentioned in the beginning of this section, Gholampour-Sheshmani-Yau \cite{GSY1, GSY2} provided an interpretation of Vafa-Witten invariants in terms of reduced Donaldson-Thomas invariants of the non-proper Calabi-Yau threefold $X = \mathrm{Tot}(K_S)$ when $b_1(S)=0$. They consider the moduli space
$$
M_X:=M_X^H(\mathrm{ch})
$$
of pure dimension 2 Gieseker $H$-stable sheaves on $X$ with proper support and Chern character
$$
\mathrm{ch} = (0,\rho[S], \mathrm{ch}_2, \mathrm{ch}_3).
$$
The moduli space $M_X$ admits a symmetric perfect obstruction theory by \cite{Tho} and Gholampour-Sheshmani-Yau reduce this perfect obstruction theory by taking out a trivial piece of rank $p_g(S)$ from the obstruction bundle (similar to \cite{BL, KT1, KT2, Lee, Li} for Gromov-Witten and Pandharipande-Thomas invariants in various settings). The moduli space $M_X$ has a $\C^*$-action induced by the natural $\C^*$-action on the fibres of $X$. Furthermore, $M_X^{\C^*} \cong (N^\perp)^{\C^*}$ and, after restriction to the fixed locus, the $\C^*$-fixed parts of $T_{M_X}^{\vir}$ and $T_{N^\perp}^{\vir}$ are equal (in $K$-theory). Since their $\C^*$-moving parts  
only differ by a trivial piece, Gholampour-Sheshmani-Yau's invariants are equal to $\VW_S^H(\rho,L,c_2)$ (up to an equivariant parameter).

\subsection{Rank 3} \label{sec:eulervir:rk3}

In this section, we present a conjecture for the virtual Euler characteristics of rank 3 Gieseker-Maruyama moduli spaces. Consider the $A_2$-lattice consisting of $\Z^2$ together with bilinear form $\langle v , w \rangle := v^t A w$ given by
\begin{equation*} \label{matrixA}
A= \left(\begin{array}{cc} 2 & -1 \\ -1 & 2 \end{array}\right).
\end{equation*}
The dual lattice $A_2^\vee$ is given by $\Z^2$ and $\langle v , w \rangle^\vee := v^t A^\vee w$ where
\begin{equation*} \label{matrixAinv}
A^\vee = A^{-1} =  \frac{1}{3} \left(\begin{array}{cc} 2 & 1 \\ 1 & 2 \end{array}\right).
\end{equation*}
Let $\epsilon := e^{\frac{2 \pi i}{3}}$. We will use the following theta functions
\begin{align*}
\Theta_{A_2,(0,0)}(x) :=& \, \sum_{v \in \Z^2} (x^2)^{\frac{1}{2} \langle v,v\rangle} =\, \sum_{(m,n) \in \Z^2} x^{2(m^2 -mn +n^2)}, \\
\Theta_{A_2,(1,0)}(x) :=& \, \sum_{v \in \Z^2} (x^2)^{\frac{1}{2}\langle v+(\frac{1}{3},-\frac{1}{3}),v +(\frac{1}{3},-\frac{1}{3}) \rangle} =\, \sum_{(m,n) \in \Z^2} x^{2(m^2 -mn +n^2+m - n +\frac{1}{3})}, \\
\Theta_{A_2^\vee,(0,0)}(x) :=& \, \sum_{v \in \Z^2} (x^6)^{\frac{1}{2} \langle v,v\rangle^\vee} = \, \sum_{(m,n) \in \Z^2} x^{2(m^2 +mn +n^2)},  \\
\Theta_{A_2^\vee,(0,1)}(x) :=& \, \sum_{v \in \Z^2} (x^6)^{\frac{1}{2} \langle v,v\rangle^\vee} e^{2 \pi i \langle v,(1,-1) \rangle^\vee} =\,\sum_{(m,n) \in \Z^2} \epsilon^{m-n} x^{2(m^2 +mn +n^2)}.
\end{align*}
\begin{conjecture} \cite[Conj.~1.1]{GK3} \label{conj:evir:rk3}
Let $(S,H)$ be a smooth polarized surface satisfying $b_1(S) = 0$ and $p_g(S)>0$. Suppose $M:=M_S^H(3,c_1,c_2)$ contains no strictly Gieseker $H$-semistable sheaves. Then $e^{\vir}(M)$ equals the coefficient of $x^{\vd(M)}$ of \begin{align*}
&9 \Bigg( \frac{1}{3 \overline{\eta}(x^2)^{12}} \Bigg)^{\chi(\O_S)} \Bigg( \frac{\Theta_{A^\vee_2, (0,1)}(x)}{3 \overline{\eta}(x^6)^3} \Bigg)^{-K_S^2} \sum_{(a,b)} \SW(a) \, \SW(b) \, \epsilon^{(a-b)c_1} \, Z_{+}(x)^{ab} \, Z_{-}(x)^{(K_S-a)(K_S-b)}, 
\end{align*}
where the sum is over all $(a,b) \in H^2(S,\Z) \times H^2(S,\Z)$ and $Z_{\pm}(x)$ are the solutions to the following quadratic equation in $\zeta$
\begin{align*}
\zeta^2 - 4Z(x)^2 \, \zeta + 4Z(x) = 0,
\end{align*}
where $Z(x) := \frac{\Theta_{A^\vee_2, (0,0)}(x)}{\Theta_{A^\vee_2, (0,1)}(x)}$.
\end{conjecture}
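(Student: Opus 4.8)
The plan is to follow the same three-step architecture that underlies the evidence for Conjecture \ref{conj:evir:rk2}, now adapted to rank $3$. The starting point is that $e^{\vir}(M) = \int_{[M]^{\vir}} c_{\vd}(T_M^{\vir})$ is a tautological integral against the virtual class, with $T_M^{\vir}$ given by \eqref{Tvir}. The first step is to apply Mochizuki's formula (Section \ref{sec:struc}) to convert this integral on the singular moduli space $M_S^H(3,c_1,c_2)$ into an expression supported on Seiberg-Witten data. For rank $3$ this produces a sum indexed by pairs $(a,b)$ of cohomology classes, each weighted by $\SW(a)\SW(b)$, of residue-type intersection numbers living on products of Hilbert schemes of points $S^{[n_1]} \times S^{[n_2]} \times S^{[n_3]}$, together with two equivariant parameters and an iterated residue recording the three stable summands of a split sheaf. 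The linear constraint $c_1 - \sum_i i a_i \in \rho H^2(S,\Z)$ appearing in Theorem \ref{thm:Laa} has its analogue here and organizes the admissible pairs $(a,b)$.

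The second step is to establish universality for these Hilbert-scheme integrals in the style of \cite{EGL} and Theorem \ref{thm:univ}: each such integral is a universal power series in $x$ whose dependence on $(S,c_1,a,b)$ enters only through $\chi(\O_S)$, $K_S^2$, and the pairings $a c_1$, $a K_S$, $ab$, $(K_S-a)(K_S-b)$. This is precisely what forces the conjectured shape, namely a prefactor of the form $A^{\chi(\O_S)} B^{-K_S^2}$ times a lattice sum $\sum_{(a,b)} \SW(a)\SW(b)\,\epsilon^{(a-b)c_1} Z_+(x)^{ab} Z_-(x)^{(K_S-a)(K_S-b)}$. The appearance of the $A_2$ root lattice and its dual is structural: it is the (co)weight lattice governing the $\mathrm{SU}(3)$ splitting types, so the theta functions $\Theta_{A_2,\bullet}$ and $\Theta_{A_2^\vee,\bullet}$ arise as the generating functions of these splittings, exactly as $\theta_2,\theta_3$ did in rank $2$.

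The third step is to pin down the universal series. Because they are independent of $S$, one evaluates them on surfaces where explicit computation is feasible: the $K_S$-independent (instanton) factor $A$ is extracted by $T$-equivariant localization on toric surfaces, where the $\C^*$-fixed loci are products of monomial-ideal strata and the Mochizuki residues become finite sums (compare Section \ref{sec:toric}); the $K_S^2$-dependent factor $B$ and the data $Z_\pm$ are then calibrated on surfaces whose only Seiberg-Witten basic classes are $0$ and $K_S$, such as minimal surfaces of general type. Computing enough coefficients in $x$ determines $A$, $B$, and $Z_\pm$ to high order, and one checks that $Z_\pm$ satisfy $\zeta^2 - 4Z(x)^2 \zeta + 4Z(x)=0$ with $Z = \Theta_{A_2^\vee,(0,0)}/\Theta_{A_2^\vee,(0,1)}$, matching the claimed closed forms.

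The main obstacle is that this route yields verification only up to finite order in $x$, not a proof. A genuine proof would require evaluating the universal series in closed form, and the decisive difficulty is the rank-$3$ combinatorics: unlike rank $2$, where a single equivariant residue can be resummed, the three-summand localization produces multi-parameter residues and nested Hilbert-scheme integrals whose all-order evaluation is unknown. The quadratic defining $Z_\pm$ strongly suggests that the closed form emerges from an algebraic relation satisfied by the Mochizuki residues (a rank-$3$ analogue of a scattering/recursion identity), so the crux is to prove that the toric generating series satisfies this relation identically. Alternatively, one could try to establish the (quasi-)modularity of $\sfZ_{S,H,3,c_1}^{\inst}(q)$ predicted by $\mathrm{SU}(3)$ S-duality and invoke finiteness of the relevant space of modular objects to upgrade the finite verification to a proof; reconciling these two approaches is itself a substantial problem.
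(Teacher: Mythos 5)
Your proposal matches the paper's own approach: the statement is a conjecture, and the paper supports it exactly as you describe --- via Mochizuki's formula, a rank-3 analogue of the universality theorem, and toric localization yielding finite-order verifications in the variable $x$ (plus the K3 case, which follows from deformation equivalence of $M$ to $S^{[\vd/2]}$ and is worth adding to your account). You are also right that this constitutes evidence rather than a proof, and the paper makes no stronger claim.
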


As in the rank 2 case, Conjecture \ref{conj:evir:rk3} holds for K3 surfaces by deformation invariance and \eqref{eulerrk1}. Moreover, we consider the following list of surfaces: \\

\noindent elliptic surfaces of type $E(3), E(4), E(5)$, blow-ups of an elliptic surface of type $E(3)$ in one point, blow-ups of a K3 surface in one or two points, double covers of $\PP^2$ branched along a smooth octic, blow-ups of the previous double covers in one point, double covers of $\PP^1 \times \PP^1$ branched along a smooth curve of bidegree $(6,6)$, blow-ups of the previous double covers in one point, smooth quintics in $\PP^3$, blow-ups of a smooth quintic in $\PP^3$ in one point, certain surfaces with small values of $p_g(S)$ and $K_S^2$ constructed by Kanev, Catanese-Debarre, and Persson \cite[Sect.~2.4]{GK3}. \\

In each case, we verified Conjecture \ref{conj:evir:rk3} for Gieseker-Maruyama moduli spaces of certain virtual dimensions, considerably lower than in the rank 2 case, and for several choices of $c_1$.  
The precise list of verifications can be found in \cite[Sect.~2.4]{GK3}. As in the rank 2 case, our method for these verifications is discussed in Section \ref{sec:struc}.

\subsection{Application: S-duality in rank 3} \label{sec:rk3Sduality}

Formula \eqref{(5.38)} for $\SU(2)$ Vafa-Witten invariants was already known to physicists in 1994 \cite{VW}. One may wonder whether the recent mathematical developments in Vafa-Witten theory led to the discovery of any new formulae. Two new directions are:
\begin{itemize}
\item A new conjectural formula for the $\SU(3)$ Vafa-Witten invariants, which we describe in this section.
\item Refinements of Vafa-Witten invariants, which are discussed in Section \ref{sec:refine}.
\end{itemize}

\begin{remark}
In the physics literature, there exists a formula for the $\SU(\rho)$ Vafa-Witten invariants for any prime rank $\rho$ and any smooth projective surface $S$ satisfying $H_1(S,\Z) = 0$ and containing a smooth connected canonical curve, i.e.~\cite[Eqn.~(5.13)]{LL}. This formula appears incorrect. Take $S$ an elliptic surface of type $E(3)$, $\rho=3$, $c_1 = B$, where $B$ is the class of a section, $c_2=3$, and a suitable polarization $H$. Then a result of T.~Bridgeland \cite{Bri} implies that $M:=M_S^H(3,B,3)$ is smooth of expected dimension and consists of a single reduced point. Hence $e^{\vir}(M) = e(M) = 1$, which does not match the instanton part of \cite[Eqn.~(5.13)]{LL}. 
\end{remark}

Let $S$ be a smooth projective surface satisfying $H_1(S,\Z) = 0$ and $p_g(S)>0$, and consider the $\SU(\rho)$ Vafa-Witten partition function $\sfZ_{S,H,\rho,c_1}(q)$ defined in \eqref{mathdefVW}. 
Let $\rho=1$ or $\rho$ prime. Vafa-Witten predicted that $\sfZ_{S,H,\rho,c_1}(q)$ only depends on $[c_1] \in H^2(S,\Z) / \rho H^2(S,\Z)$ and is the Fourier expansion of a meromorphic function $\sfZ_{S,H,\rho,c_1}(\tau)$ on $\mathfrak{H}$ satisfying
 \cite[(5.39)]{VW}, \cite[(5.22)]{LL}
\begin{align} \label{Sdualitystrong}
\begin{split}
\sfZ_{S,H,\rho,c_1}(\tau+1) &= (-1)^{\rho \chi(\O_S)} \, e^{\frac{\pi i \rho}{12} K_S^2} \, e^{-\frac{\pi i (\rho-1)}{\rho} c_1^2} \, \sfZ_{S,H,\rho,c_1}(\tau), \\
\sfZ_{S,H,\rho,c_1}(-1/\tau) &= (-1)^{(\rho-1)\chi(\O_S)} \, \rho^{1-\frac{e(S)}{2}} \, \Big( \frac{\tau}{i} \Big)^{-\frac{e(S)}{2}} \, \sum_{[a]} e^{\frac{2 \pi i}{\rho} a c_1} \sfZ_{S,H,\rho,a}(\tau),
\end{split}
\end{align}
where the sum is over all $[a] \in H^2(S,\Z) / \rho H^2(S,\Z)$. 
S-duality transformation \eqref{Sdualitystrong} implies \eqref{Sdualityweak} as follows. Define
\begin{align*}
\sfZ_{\SU(\rho)} := \rho^{-1} \sfZ_{S,H,\rho,0}, \quad \sfZ_{\SU(\rho) / \Z_\rho} :=\sum_{[a]} \sfZ_{S,H,\rho,a},
\end{align*}
where the sum is over all $[a] \in H^2(S,\Z) / \rho H^2(S,\Z)$ and $\sfZ_{S,H,\rho,a}$ was defined in \eqref{mathdefVW}. Taking $c_1=0$, \eqref{Sdualitystrong} implies \eqref{Sdualityweak}. 

\begin{remark} \label{non-alg}
There is an important subtlety in the previous discussion. By definition \eqref{mathdefVW}, the generating function $\sfZ_{S,H,\rho,c_1}(q)$ is obviously zero for non-algebraic classes $c_1 \in H^2(S,\Z)$. For the above discussion to make sense, we need the following more precise formulation: conjecturally there exists a series $\widetilde{\sfZ}_{S,H,\rho,c_1}(q)$ defined for any $S,H,\rho$ as above and any possibly non-algebraic $c_1 \in H^2(S,\Z)$ such that:
\begin{itemize}
\item $\widetilde{\sfZ}_{S,H,\rho,c_1}(q)$ only depends on $[c_1] \in H^2(S,\Z) / \rho H^2(S,\Z)$,
\item $\widetilde{\sfZ}_{S,H,\rho,c_1}(q) = \sfZ_{S,H,\rho,c_1}(q)$ for algebraic classes $c_1 \in H^2(S,\Z)$,
\item $\widetilde{\sfZ}_{S,H,\rho,c_1}(q)$ is the Fourier expansion of a meromorphic function $\widetilde{\sfZ}_{S,H,\rho,c_1}(\tau)$ on $\mathfrak{H}$ satisfying \eqref{Sdualitystrong}.
\end{itemize}
Clearly it is desirable to have a geometric definition of $\sfZ_{S,H,\rho,c_1}(q)$ for non-algebraic classes $c_1 \in H^2(S,\Z)$. Y.~Jiang \cite{Jia} recently introduced important ideas for such a definition by considering the Vafa-Witten theory of $\mu_\rho$-gerbes. In \cite{JK}, the second-named author and Jiang give a mathematical definition of the $\SU(\rho) / \Z_{\rho}$ Vafa-Witten partition function, using K.~Yoshioka's moduli spaces of twisted sheaves \cite{Yos5}, and prove the S-duality conjecture for K3 surfaces and arbitrary prime rank $\rho$.
\end{remark} 

The instanton contribution $\sfZ^{\inst}_{S,H,3,c_1}(q)$ to $\sfZ_{S,H,3,c_1}(q)$ is predicted by Conjecture \ref{conj:evir:rk3}. Combined with the physicists' S-duality prediction \eqref{Sdualitystrong}, we conjectured \cite[Conj.~1.5]{GK3} the following formula for the monopole contribution $\sfZ^{\mono}_{S,H,3,c_1}(q) := \sfZ_{S,H,3,c_1}(q) - \sfZ^{\inst}_{S,H,3,c_1}(q)$ 
\begin{align} \label{conj:mono:rk3}
\Bigg( \frac{1}{3\Delta(q^3)^{\frac{1}{2}}} \Bigg)^{\chi(\O_S)} \Bigg(\frac{\Theta_{A_2,(1,0)}(q^{\frac{1}{2}})}{\eta(q)^3}  \Bigg)^{-K_S^2} \sum_{(a,b)} \SW(a) \, \SW(b) \,\delta_{c_1+a,b} \, W_+(q^{\frac{1}{2}})^{ab} \, W_-(q^{\frac{1}{2}})^{(K_S-a)(K_S-b)}, 
\end{align}
where the sum is over all $(a,b) \in H^2(S,\Z) \times H^2(S,\Z)$. Moreover, $W_{\pm}(x)$ are the solutions of the following quadratic equations in $\omega$
\begin{align*}
\omega^2 - 4W(x)^2 \, \omega + 4W(x) = 0,
\end{align*}
where $W(x) := \frac{\Theta_{A_2, (0,0)}(x)}{\Theta_{A_2, (1,0)}(x)}$. Note that the instanton contribution (Conjecture \ref{conj:evir:rk3}) involves the theta function of the lattice $A_2^\vee$, whereas the monopole contribution \eqref{conj:mono:rk3} involves the theta function of the lattice $A_2$. Similarly, one can write the instanton and monopole part of \eqref{(5.38)} in terms of the theta function of the $A_1^\vee$-lattice and $A_1$-lattice respectively. 

We now discuss some remarkable verifications of Conjecture \eqref{conj:mono:rk3}. Recall that the components of $(N^\perp)^{\C^*}$ can be indexed by eigenrank (Section \ref{sec:VW}). Using cosection localization \cite{KL1, KL2}, Thomas \cite[Thm.~5.23]{Tho} proved the following powerful theorem.
\begin{theorem}[Thomas] \label{thm:Thomas}
Let $S$ be a smooth projective surface satisfying $p_g(S)>0$ and let $\rho$ be prime. Suppose $N_S^H(\rho,L,c_2)$ does not contain strictly Gieseker $H$-semistable Higgs pairs for any $c_2$. Then only Higgs pairs with eigenranks $(\rho)$ and $(1, \ldots, 1)$ contribute to $\sfZ_{S,H,\rho,c_1}(q)$.
\end{theorem}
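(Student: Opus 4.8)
The plan is to run the equivariant form of Kiem--Li cosection localization \cite{KL1,KL2}, using the holomorphic $2$-form supplied by $p_g(S)>0$ to build the cosection, and then to read off its degeneracy locus component by component of the $\C^*$-fixed locus $(N^\perp)^{\C^*}$.

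First I would construct the cosection. Fixing $0\neq\theta\in H^0(S,K_S)$, cup product with $\theta$ together with the trace pairing on $R\hom(\E,\E)_0$ yields a cosection $\sigma\colon\mathcal{O}b\to\mathcal{O}_{N^\perp}$ of the obstruction sheaf of the Tanaka--Thomas symmetric obstruction theory; this is the algebraic counterpart of the Gholampour--Sheshmani--Yau reduction that removes a trivial rank-$p_g(S)$ piece. The cosection is $\C^*$-equivariant of the weight dictated by the scaling action on the fibres of $X=\mathrm{Tot}(K_S)$. By equivariant cosection localization, the class $[(N^\perp)^{\C^*}]^{\vir}$ entering the Graber--Pandharipande formula \eqref{defloc} is supported on the degeneracy locus $D(\sigma)=\{\sigma\text{ not surjective}\}$. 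Hence a fixed component $F_{\bar r}$ of eigenrank $\bar r=(r_1,\dots,r_k)$ contributes $0$ to $\sfZ_{S,H,\rho,c_1}(q)$ as soon as $F_{\bar r}\cap D(\sigma)=\emptyset$, i.e.\ as soon as $\sigma$ is surjective everywhere on $F_{\bar r}$.

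The heart of the argument is to compute $D(\sigma)$ on each $F_{\bar r}$. There the sheaf splits as $E=\bigoplus_i E_i\otimes\mathfrak{t}^{-i}$, with the Higgs field inducing $\phi_i\colon E_i\to E_{i+1}\otimes K_S$. Decomposing $\mathcal{O}b$ into its $\C^*$-weight spaces, the piece on which $\sigma$ lands is governed by the maps $\phi_i$ paired against $\theta$, and $\sigma$ degenerates at $(E,\phi)$ exactly when this $\theta$-paired Higgs data is concentrated over the canonical curve $C=\{\theta=0\}\in|K_S|$. For $\bar r=(\rho)$ one has $\phi=0$ (the instanton branch $M=M_S^H(\rho,L,c_2)$), and for $\bar r=(1,\dots,1)$ the Gholampour--Thomas description of $F_{\bar r}$ by nested Hilbert schemes \cite{GT1,GT2} (the setting of Theorem \ref{thm:Laa}) forces $\sigma$ to degenerate identically; these two components are therefore allowed to contribute. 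For every other eigenrank the goal is to show that $\sigma$ is nowhere degenerate on $F_{\bar r}$, so that the component drops out.

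The main obstacle is exactly this last nonvanishing statement, and it is where primality of $\rho$ is used. Any partition of a prime $\rho$ other than $(\rho)$ and $(1,\dots,1)$ has at least two parts and at least one part of size $\geq 2$, so the associated fixed component carries simultaneously a nontrivial grading ($k\geq 2$) and an eigensheaf $E_i$ of rank $\geq 2$. I would then check that this combination forces the relevant $\theta$-paired Higgs contribution to be nonzero at every point of $F_{\bar r}$, so that $\sigma$ is surjective throughout and $F_{\bar r}\cap D(\sigma)=\emptyset$. Proving this uniform nondegeneracy --- which also explains why intermediate ``balanced'' eigenranks such as $(2,2)$ can survive when $\rho$ is composite --- via an explicit analysis of the weight-$0$ obstruction and its pairing with $\theta$ is where essentially all the work lies.
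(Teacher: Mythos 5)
Your overall framework is the right one: the paper gives no proof of this statement (it is quoted from Thomas, ``Equivariant K-theory and refined Vafa-Witten invariants'', Thm.~5.23), and Thomas's argument is indeed a cosection-localization argument built from a nonzero $\theta\in H^0(S,K_S)$, analysed component by component on $(N^\perp)^{\C^*}$. The gap is in the step you yourself flag as ``where essentially all the work lies'': your proposed degeneracy criterion is not the correct one, and as stated it is false. You claim that a component is killed whenever its eigenrank vector has at least two parts and at least one part of rank $\geq 2$, i.e.\ whenever the grading is nontrivial and some eigensheaf $E_i$ has $\rk E_i\geq 2$. But the balanced eigenrank $(2,2)$ in rank $\rho=4$ satisfies exactly this condition and nevertheless contributes (as you note yourself when you say balanced eigenranks ``survive when $\rho$ is composite''). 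So the uniform nondegeneracy statement you propose to prove is internally inconsistent: it cannot simultaneously kill $(2,3)$ for $\rho=5$ and spare $(2,2)$ for $\rho=4$, since both satisfy your hypothesis. Relatedly, your sentence locating the primality input (``any partition of a prime $\rho$ other than $(\rho)$ and $(1,\dots,1)$ has at least two parts and a part of size $\geq 2$'') is a property shared by partitions of composite numbers, so it cannot be where primality does its work.

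The correct mechanism is different. On a fixed component with $E=\bigoplus_i E_i\otimes\mathfrak{t}^{-i}$ and $\phi_i\colon E_i\to E_{i+1}\otimes K_S$, the $p_g>0$ vanishing kills the component unless \emph{every} $\phi_i$ is generically an isomorphism; this forces all the eigenranks to be equal, $\rk E_0=\dots=\rk E_k=r$, so that $\rho=(k+1)r$. Only now does primality enter: $\rho$ prime forces $k+1=1$ (eigenrank $(\rho)$, i.e.\ $\phi=0$, the instanton branch) or $r=1$ (eigenrank $(1,\dots,1)$). Equivalently, for prime $\rho$ any other eigenrank vector must have two consecutive unequal entries, so some $\phi_i$ maps between eigensheaves of different ranks and cannot be a generic isomorphism, and it is \emph{that} failure which makes the cosection surjective on the component. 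This also explains why $(2,2)$ survives in composite rank: all its parts are equal, so the generic-isomorphism condition can be met. To repair your proof you would need to replace your criterion by ``some $\phi_i$ is not a generic isomorphism $\Rightarrow$ no contribution'' and prove that implication; the combinatorics with primality is then immediate.
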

The component of $N_S^H(\rho,L,c_2)^{\C^*}$ corresponding Higgs pairs with eigenrank $(\rho)$ is precisely the Gieseker-Maruyama moduli space $M_S^H(\rho,L,c_2)$ (in this case, the Higgs field $\phi=0$). By Thomas's theorem, all Higgs pairs contributing to $\sfZ^{\mono}_{S,H,3,c_1}(q)$ have eigenrank $(1,1,1)$. Using Theorems \ref{thm:Laa} and \ref{thm:Thomas}, Laarakker proved that the first 11 non-zero coefficients of $\sfZ^{\mono}_{S,H,3,c_1}(q)$ are indeed as predicted by Conjecture \eqref{conj:mono:rk3}. It is worth noting that, for prime rank, calculations on the monopole branch are easier than their analogs on the instanton branch (essentially because Theorem \ref{thm:Laa} does not involve taking residues as opposed to our universality results such as Theorem \ref{thm:univ} described in Section \ref{sec:struc}). 

In \cite{GK3}, we proved the following result.
\begin{theorem} \cite[Prop.~4.10]{GK3}
The conjectural formula for $\sfZ_{S,H,3,c_1}(q)$, determined by Conjectures \ref{conj:evir:rk3} and \eqref{conj:mono:rk3}, satisfies the S-duality transformation \eqref{Sdualitystrong}.
\end{theorem}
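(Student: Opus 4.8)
The plan is to verify the two displayed identities in \eqref{Sdualitystrong} directly, since for $\rho=3$ they are precisely the prescribed transformation laws under the two generators $T:\tau\mapsto\tau+1$ and $S:\tau\mapsto-1/\tau$ of the modular group, with $q=e^{2\pi i\tau}$. The first step is to rewrite the instanton part $\sfZ^{\inst}_{S,H,3,c_1}(q)$ in the variable $q$ explicitly, performing on Conjecture \ref{conj:evir:rk3} the same coefficient-extraction and normalization as in the rank $2$ case, so that $\sfZ^{\inst}$ and the monopole part \eqref{conj:mono:rk3} appear as the same type of expression: a prefactor assembled from powers of $\overline{\eta}$, $\Delta$ and a single lattice theta function, raised to $\chi(\O_S)$ and $-K_S^2$, times a Seiberg-Witten sum whose summands carry $Z_\pm$ (resp.\ $W_\pm$) in the exponents $ab$ and $(K_S-a)(K_S-b)$. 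The structural fact to exploit throughout is that $\sfZ^{\inst}$ is built from the theta functions of the dual lattice $A_2^\vee$, while $\sfZ^{\mono}$ is built from those of $A_2$ itself; S-duality will exchange the two.

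Next I would record the transformation laws of all building blocks. Under $T$ these are elementary: $\eta(\tau+1)=e^{\pi i/12}\eta(\tau)$, and $\Theta_{A_2,\mu}$, $\Theta_{A_2^\vee,\nu}$ pick up roots of unity governed by $\langle\mu,\mu\rangle$ and $\langle\nu,\nu\rangle^\vee$. Collecting these phases together with $\epsilon^{(a-b)c_1}$ and the duality $\SW(a)=(-1)^{\chi(\O_S)}\SW(K_S-a)$ should reproduce the first line of \eqref{Sdualitystrong}, namely the factor $(-1)^{3\chi(\O_S)}e^{\frac{\pi i}{4}K_S^2}e^{-\frac{2\pi i}{3}c_1^2}$; this amounts to bookkeeping of fractional powers of $q$ and cube roots of unity. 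Under $S$ the key tool is the Jacobi/Poisson transformation for rank-$2$ lattice theta functions, schematically $\Theta_{A_2^\vee,\nu}(-1/\tau)=\frac{\tau}{i}\,\frac{1}{\sqrt{3}}\sum_{\mu}e^{-2\pi i\langle\mu,\nu\rangle}\Theta_{A_2,\mu}(\tau)$, which converts the $A_2^\vee$-thetas of $\sfZ^{\inst}$ into the $A_2$-thetas of $\sfZ^{\mono}$ and conversely; the discriminant $\det A=3$ produces the factors $1/\sqrt{3}$.

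The heart of the argument, and the main obstacle, is to track how the algebraic functions $Z_\pm$ and $W_\pm$ behave under $S$. Since $Z=\Theta_{A_2^\vee,(0,0)}/\Theta_{A_2^\vee,(0,1)}$ and $W=\Theta_{A_2,(0,0)}/\Theta_{A_2,(1,0)}$ are ratios of weight-one thetas, they are weight-zero modular functions of level $3$, and transforming numerator and denominator by the lattice-duality formula yields an explicit Fricke-type relation between $Z(-1/\tau)$ and $W(\tau)$. I would then show this relation is exactly the one under which the quadratics $\zeta^2-4Z^2\zeta+4Z=0$ and $\omega^2-4W^2\omega+4W=0$ are interchanged, so that $\{Z_+,Z_-\}$ maps to $\{W_+,W_-\}$. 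The delicate point is to keep track of which branch maps to which and to confirm that the exponents $ab$ and $(K_S-a)(K_S-b)$ are preserved under this matching; here one may use that Seiberg-Witten basic classes satisfy $a^2=aK_S$ to symmetrize the exponents, and the identity is to be established formally in the symbols $\SW(a)$. A mismatch of branches or of exponents would spoil the identity, so this step requires the most care.

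Finally I would assemble the three ingredients. The Seiberg-Witten sum must be reorganized: under $S$ the phase $\epsilon^{(a-b)c_1}$ of $\sfZ^{\inst}$ played against the Kronecker delta $\delta_{c_1+a,b}$ of $\sfZ^{\mono}$ is precisely a finite Fourier transform on $H^2(S,\Z)/3H^2(S,\Z)$, and this is what produces the sum $\sum_{[a]}e^{\frac{2\pi i}{3}ac_1}\sfZ_{S,H,3,a}$ on the right of the second line of \eqref{Sdualitystrong}. For the weight factors, each $\tau/i$ from a theta transformation together with the $\sqrt{-i\tau}=(\tau/i)^{1/2}$ from $\eta$ combines, via Noether's formula $e(S)=12\chi(\O_S)-K_S^2$, into the total factor $(\tau/i)^{-e(S)/2}$; indeed the prefactor has modular weight $-6\chi(\O_S)+\tfrac12 K_S^2=-\tfrac12 e(S)$. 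The accumulated factors $1/\sqrt{3}$ together with the overall $1/3$ normalizations then yield $3^{1-\frac{e(S)}{2}}$. Once these weight and phase identities are checked, both lines of \eqref{Sdualitystrong} follow, completing the proof.
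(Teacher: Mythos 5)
Your overall strategy is the one the paper's proof follows: a direct verification of the two transformation laws in \eqref{Sdualitystrong} by combining the inversion formula for the $A_2$/$A_2^\vee$ lattice theta functions, the Seiberg--Witten duality $\SW(a)=(-1)^{\chi(\O_S)}\SW(K_S-a)$, a finite Fourier transform on $H^2(S,\Z)/3H^2(S,\Z)$, and Noether's formula to assemble the weight $-e(S)/2$. The identification of the branch-matching $\{Z_\pm\}\leftrightarrow\{W_\pm\}$ as the delicate algebraic point is also apt.

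Two steps are understated in a way that would trip you up if taken literally. First, the claim that S-duality simply ``exchanges'' the instanton part (built from $A_2^\vee$) and the monopole part (built from $A_2$) is too clean: after the coefficient extraction, $\sfZ^{\inst}_{S,H,3,c_1}(q)$ is itself a sum of \emph{three} terms with arguments of the shape $\epsilon^j q^{1/3}$, $j=0,1,2$ (the rank-$3$ analogue of the $\pm q^{1/2}$ terms in the rank-$2$ formula), so the full partition function has four constituents which mix among themselves under both generators: $T$ permutes the three instanton branches cyclically, and $S$ sends the monopole term into the $j=0$ instanton branch but sends the $j=1,2$ branches into combinations involving $\eta((\tau+j)/3)$, whose transformation under general elements of $\mathrm{SL}(2,\Z)$ requires the full eta multiplier system --- this is where the Dedekind sums cited in the paper's description of the proof enter, and it is not covered by the two elementary identities $\eta(\tau+1)=e^{\pi i/12}\eta(\tau)$, $\eta(-1/\tau)=\sqrt{-i\tau}\,\eta(\tau)$ alone. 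Second, your ``finite Fourier transform'' step must be carried out in the presence of phases quadratic in $a$ (coming from the leading powers $q^{-c_1^2/3}$ of the branches and from $\epsilon^{(a-b)c_1}$ versus $\delta_{c_1+a,b}$), so what is actually needed is the evaluation of Gauss sums for the unimodular lattice $(H^2(S,\Z),\cup)$ modulo $3$; their values, expressed through the signature via the index theorem, are what produce the normalization $3^{1-e(S)/2}=3^{-b_2(S)/2}$ and the sign $(-1)^{(\rho-1)\chi(\O_S)}$ --- the factors $1/\sqrt{3}$ from the theta inversions occur with exponent $-K_S^2$ and cannot by themselves account for $3^{-b_2(S)/2}$. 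With these two ingredients supplied, your outline matches the argument of \cite[Prop.~4.10]{GK3}.
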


The proof combines properties of quite diverse mathematical objects: Seiberg-Witten invariants, the lattice $(H^2(S,\Z), \cup)$, Gauss sums and Dedekind sums, and lattice theta functions.

\begin{remark}
Roughly speaking, the S-duality transformation \eqref{Sdualitystrong} swaps the contributions of the monopole and instanton branch. We do not know what this duality corresponds to geometrically. It is highly remarkable that, for prime rank, our ``non-abelian'' calculations on the instanton branch appear to contain the same information as Laarakker's ``abelian'' calculations on the monopole branch.
\end{remark}

\section{Refinements} \label{sec:refine}

The method we used for our verifications of Conjectures \ref{conj:evir:rk2} and \ref{conj:evir:rk3} holds quite generally; not just for virtual Euler characteristics (Section \ref{sec:struc}). This allowed us to find refinements to virtual $\chi_y$-genus, elliptic genus, and cobordism class.

\subsection{Virtual $\chi_y$-genera}

The normalised virtual $\chi_y$-genus of a proper $\C$-scheme $Z$, with perfect obstruction theory with virtual tangent bundle $T_{Z}^{\vir}$, is defined by \cite{FG}
\begin{align*}
\overline{\chi}_{-y}^{\vir}(Z) &:= y^{-\frac{\vd(Z)}{2}}\chi_{-y}^{\vir}(Z), \\
\chi_{y}^{\vir}(Z) &:=  \sum_{p \geq 0} y^p \, \chi(Z,\Lambda^p \Omega_{Z}^{\vir} \otimes \O_Z^{\vir}) \in \Z[y],
\end{align*}
where $\vd(Z) := \rk T_Z^{\vir}$, $\O_Z^{\vir}$ denotes the virtual structure sheaf of $Z$ and $\Omega_{Z}^{\vir} := (T_{Z}^{\vir})^{\vee}$. The normalized virtual $\chi_y$-genus is a symmetric Laurent polynomial in $y^{\frac{1}{2}}$ by \cite[Cor.~4.9]{FG}. Moreover, $e^\vir(Z) =\overline \chi_{-1}^{\vir}(Z)$.

In order to formulate the analogs of Conjectures \ref{conj:evir:rk2} and \ref{conj:evir:rk3} for virtual $\chi_y$-genera, we require the following refinements of
the lattice theta functions of the previous section
 \begin{align*}
\theta_2(x,y) = \sum_{n \in \Z} x^{(n+\frac{1}{2})^2} y^{n+\frac{1}{2}}, \quad \theta_3(x,y) = \sum_{n \in \Z} x^{n^2} y^{n}
\end{align*}
and
\begin{align*}
\Theta_{A_2,(0,0)}(x,y) &:= \, \sum_{(m,n) \in \Z^2} x^{2(m^2 -mn +n^2)} y^{m+n}, \quad \Theta_{A_2,(1,0)}(x,y) := \, \sum_{(m,n) \in \Z^2} x^{2(m^2 -mn +n^2+m - n +\frac{1}{3})} y^{m+n}, \\
\Theta_{A_2^\vee,(0,0)}(x,y) &:= \, \sum_{(m,n) \in \Z^2} x^{2(m^2 +mn +n^2)} y^{m+n},  \quad \Theta_{A_2^\vee,(0,1)}(x,y) := \,\sum_{(m,n) \in \Z^2} \epsilon^{m-n} x^{2(m^2 +mn +n^2)} y^{m+n},
\end{align*}
where $\epsilon = e^{\frac{2 \pi i}{3}}$. The analog of Conjecture \ref{conj:evir:rk2} for virtual $\chi_y$-genus is straight-forward.
\begin{conjecture} \cite[Conj.~6.7]{GK1} \label{conj:chiyvir:rk2}
The statement of Conjecture \ref{conj:evir:rk2} holds with the following replacements: $e^{\vir}$ replaced by $\overline{\chi}_{-y}^{\vir}$, $\overline{\eta}(x^2)^{12}$ replaced by 
$$
\prod_{n=1}^{\infty} (1-x^{2n}y) (1-x^{2n}y^{-1}) (1-x^{2n})^{10},
$$
and $\theta_3(x)$ replaced by $\theta_3(x,y^{\frac{1}{2}})$. Note that $\overline{\eta}(x^4)^2$ does not get replaced.
\end{conjecture}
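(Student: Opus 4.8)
The plan is to refine, variable by variable, the argument behind the verified cases of Conjecture \ref{conj:evir:rk2}, carrying the auxiliary parameter $y$ through every step. First I would apply virtual Hirzebruch-Riemann-Roch \cite{FG} to $T_M^\vir = R\pi_* R\hom(\E,\E)_0[1]$: for each $p$ one has $\chi(M,\Lambda^p\Omega_M^\vir\otimes\O_M^\vir) = \int_{[M]^\vir}\ch(\Lambda^p\Omega_M^\vir)\,\td(T_M^\vir)$, so that
$$
\chi_{y}^\vir(M) = \int_{[M]^\vir} \ch\Big(\textstyle\sum_{p\ge 0} y^p\,\Lambda^p\Omega_M^\vir\Big)\,\td(T_M^\vir).
$$
This exhibits $\overline\chi_{-y}^\vir(M)$ as the virtual integral of a single multiplicative characteristic class of $T_M^\vir$ depending on $y$. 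Specializing the refinement variable to $y=1$ collapses this class to $c_\vd(T_M^\vir)$ and returns $e^\vir(M)=\overline\chi^\vir_{-1}(M)$; correspondingly the stated refinements degenerate, the product to $\overline\eta(x^2)^{12}$ and $\theta_3(x,y^{\frac12})$ to $\theta_3(x)$, so the conjecture is consistent with Conjecture \ref{conj:evir:rk2} and I would use this as the basic sanity check.

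Next I would feed the tautological integrand above into Mochizuki's formula (Section \ref{sec:struc}), which rewrites $\int_{[M]^\vir}(\cdots)$ as a sum over Seiberg-Witten basic classes of residues of integrals over products $S^{[n_1]}\times S^{[n_2]}$ of Hilbert schemes of points. Since the $\chi_y$-genus class is assembled from Chern classes of $T_M^\vir$, hence from the universal sheaf $\E$, the universality mechanism producing Theorem \ref{thm:univ} applies unchanged with $y$ treated as a formal parameter. The outcome is that the generating function factors as a universal series in $q,y$ raised to the power $\chi(\O_S)$, times a second universal series raised to the power $K_S^2$, times a Seiberg-Witten lattice sum with exponents in $a c_1$ and $a K_S$. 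The problem is thereby reduced to matching these finitely many universal series with the refined eta-product, with $\theta_3(x,y^{\frac12})$, and with the \emph{un}refined factor $\overline\eta(x^4)^2$ of the conjecture.

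To determine the universal series explicitly I would compute the Hilbert-scheme intersection numbers entering Mochizuki's formula on toric surfaces by torus localization, exactly as for Conjecture \ref{conj:evir:rk2}, now recording the $\Lambda_y$-grading on each fixed-point contribution; pinning down the $y$-dependence is further anchored by the K3 case, where ``stable equals semistable'' makes $M$ smooth and deformation equivalent to a Hilbert scheme of points $S^{[\vd/2]}$, so that $\overline\chi_{-y}^\vir(M)$ equals a genuine $\chi_y$-genus computed by the G\"ottsche-type formula. Matching coefficients should also explain structurally why $\overline\eta(x^4)^2$ is not refined: the $K_S^2$-series splits into a part that carries the $\Lambda_y$-grading (refining to $\theta_3(x,y^{\frac12})$) and a part on which the grading acts trivially (remaining $\overline\eta(x^4)^2$).

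The main obstacle is that the statement is a conjecture, not a theorem: the toric-Mochizuki computation pins the correct universal \emph{shape} and verifies the identity only up to a finite order in $x$, i.e.~up to a bounded virtual dimension, rather than establishing the closed (Jacobi-form) expression to all orders. Closing this gap would require an independent derivation of the theta-function structure in $y$ --- plausibly from the expected refined modular/S-duality transformation behaviour of $\overline\chi_{-y}^\vir$ --- instead of coefficient-by-coefficient verification, and this is exactly the part that remains open.
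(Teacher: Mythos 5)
Your proposal matches what the paper actually does for this statement: since it is a conjecture rather than a theorem, the paper's support consists of exactly the route you describe --- virtual Hirzebruch--Riemann--Roch to reduce $\overline{\chi}_{-y}^{\vir}(M)$ to descendent Donaldson invariants, Mochizuki's formula and the universality argument of Section \ref{sec:struc} (with $y$ carried along as a formal parameter in the universal series), toric localization to pin down finitely many coefficients, and the K3 case settled by deformation equivalence together with the G\"ottsche--Soergel computation of $\chi_y$-genera of Hilbert schemes. You also correctly identify the $y=1$ consistency check and the essential limitation that this only verifies the closed Jacobi-form expression up to bounded virtual dimension, which is precisely why the statement remains a conjecture.
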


The analog of Conjecture \ref{conj:evir:rk3} for virtual $\chi_y$-genus involves a surprising refinement of the quadratic equation.
\begin{conjecture}  \cite[Conj.~1.1]{GK3}  \label{conj:chiyvir:rk3}
The statement of Conjecture \ref{conj:evir:rk3} holds with the following replacements: $e^{\vir}$ replaced by $\overline{\chi}_{-y}^{\vir}$, $\overline{\eta}(x^2)^{12}$ replaced by 
$$
\prod_{n=1}^{\infty} (1-x^{2n}y) (1-x^{2n}y^{-1}) (1-x^{2n})^{10},
$$
$\Theta_{A_2^\vee,(0,1)}(x)$ replaced by $\Theta_{A_2^\vee,(0,1)}(x,y)$, and $Z_{\pm}(x)$ replaced by $Z_{\pm}(x,y)$ which are the solutions to the following quadratic equation in $\zeta$
\begin{align*}
\zeta^2 - (Z(x,y)^2+3Z(x,y)Z(x,1)) \, \zeta +Z(x,y)+3Z(x,1) = 0,
\end{align*}
where  $Z(x,y) := \frac{\Theta_{A^\vee_2, (0,0)}(x,y)}{\Theta_{A^\vee_2, (0,1)}(x,y)}$. Note that $\overline{\eta}(x^6)^3$ does not get replaced.
\end{conjecture}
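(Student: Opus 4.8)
The plan is to establish Conjecture \ref{conj:chiyvir:rk3} by the same three-step machine that underlies our verifications of Conjectures \ref{conj:evir:rk3} and \ref{conj:chiyvir:rk2}: (i) reduce $\overline{\chi}_{-y}^{\vir}(M)$ to Seiberg--Witten contributions via Mochizuki's formula, (ii) extract a universal structure using a $\chi_y$-refinement of Theorem \ref{thm:univ}, and (iii) compute the resulting universal power series by torus localization on toric surfaces, then match with the conjectural closed form. Because the normalized $\chi_y$-genus specializes to $e^{\vir}$ at $y=1$, every step should deform the corresponding step in the rank-$3$ Euler case, and the whole argument must collapse to (our verification scheme for) Conjecture \ref{conj:evir:rk3} when $y=1$---a consistency check I would monitor throughout.

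First I would express $\overline{\chi}_{-y}^{\vir}(M)$ via virtual Hirzebruch--Riemann--Roch \cite{FG} as the integral against $[M]^{\vir}$ of the Hirzebruch $\chi_y$ class of $T_M^{\vir}$, and feed this into Mochizuki's formula. As in the Euler case, the formula rewrites the integral as a sum, indexed by decompositions governed by pairs of Seiberg--Witten basic classes, of residue integrals over products $S^{[n_1]} \times S^{[n_2]}$ of Hilbert schemes of points, with tautological insertions built from the universal sheaf. The essential point is that the $\chi_y$ class is still a multiplicative characteristic class, so it interacts with these insertions in a controlled, multiplicative way; the variable $y$ enters only through the Chern roots of $T_M^{\vir}$.

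Next I would invoke a $\chi_y$-refined version of the universality result (Theorem \ref{thm:univ}), analogous to the one used for Conjecture \ref{conj:chiyvir:rk2}, to conclude that each contribution is a universal series in $\Q(\!(x)\!)[y^{\pm 1/2}]$ depending on $S$ only through $\chi(\O_S)$, $K_S^2$, and the pairings $aK_S$, $ab$, $(K_S-a)(K_S-b)$ with the basic classes $a,b$. This forces the generating function into the shape
\begin{align*}
A^{\chi(\O_S)}\, B^{-K_S^2} \sum_{(a,b)} \SW(a)\,\SW(b)\, \epsilon^{(a-b)c_1}\, Z_+^{ab}\, Z_-^{(K_S-a)(K_S-b)},
\end{align*}
and reduces the problem to identifying the four universal series $A,B,Z_+,Z_-$ as functions of $x$ and $y$. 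I would pin these down by evaluating on toric surfaces---$\PP^2$, $\PP^1 \times \PP^1$, Hirzebruch surfaces, and their blow-ups---where $\overline{\chi}_{-y}^{\vir}$ is computed by equivariant localization over fixed loci indexed by tuples of monomial ideals, expanding to high order in $x$ and $y$.

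The hard part will be the refined quadratic equation
\begin{align*}
\zeta^2 - \big(Z(x,y)^2 + 3\,Z(x,y)\,Z(x,1)\big)\, \zeta + Z(x,y) + 3\,Z(x,1) = 0.
\end{align*}
At $y=1$ this degenerates to $\zeta^2 - 4 Z(x,1)^2\,\zeta + 4 Z(x,1) = 0$, recovering Conjecture \ref{conj:evir:rk3}; the genuinely new phenomenon is the simultaneous appearance of the deformed series $Z(x,y)$ and its specialization $Z(x,1)$, which one would not predict from the $y=1$ equation alone. Deriving this asymmetric mixing from the structure of Mochizuki's formula---rather than merely fitting it to numerics---is the central obstacle. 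I would try to trace the factor of $3$ and the $Z(x,1)$ terms back to the rank-$3$ Weyl-group combinatorics encoded in the $A_2^\vee$ lattice and the way the $\chi_y$ class splits over the three Chern roots at a fixed point; a successful proof would exhibit $Z_\pm(x,y)$ as the two eigenvalue-type series governing the resummation of the instanton contributions over $c_2$.
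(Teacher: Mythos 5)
The statement you are addressing is a \emph{conjecture}: the paper does not prove it, and neither does \cite{GK3}. What the paper actually offers is (i) a proof only for K3 surfaces, by deformation equivalence of $M^H_S(3,c_1,c_2)$ to $S^{[\vd/2]}$ combined with the G\"ottsche--Soergel computation of $\chi_y$-genera of Hilbert schemes, and (ii) finite-order verifications for a list of surfaces, with the upper bound on the virtual dimension between $2$ and $10$. Your three-step plan is essentially the paper's verification scheme (virtual Hirzebruch--Riemann--Roch, Mochizuki's formula, universality as in Theorem \ref{thm:univ}, toric localization), but it cannot be upgraded to a proof: the toric computations determine the universal power series only up to a finite order in $x$, so the output of the machine is a finite list of coefficient checks, not an identity of formal power series. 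You acknowledge that deriving the refined quadratic equation is ``the central obstacle,'' but the proposal offers no mechanism for overcoming it; in the paper that equation is precisely the conjectural content, fitted to numerics rather than derived.

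A second, more structural gap: universality does not ``force'' the generating function into the shape you write. The rank-$3$ analogue of Theorem \ref{thm:univ} produces a sum over pairs of classes with a larger collection of universal series indexed by all pairwise intersection numbers $a_i a_j$, $a_i K_S$, $K_S^2$, $\chi(\O_S)$, together with an iterated residue $\Res_{t_1}\Res_{t_2}$ that has not yet been taken. Repackaging that data into exactly two series $Z_\pm$ raised to $ab$ and $(K_S-a)(K_S-b)$, and identifying $A$, $B$, $Z_\pm$ with the specific theta-quotients of the $A_2^\vee$ lattice and the roots of the asymmetric quadratic in $Z(x,y)$ and $Z(x,1)$, is additional structure that must be conjectured and then tested coefficient by coefficient. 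If you want to present something provable, the honest statements available are: the $y=1$ specialization consistency, the K3 case via \cite{GS}, and the existence of \emph{some} universal function of the Mochizuki type---not the closed form asserted in the conjecture.
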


Specialising Conjectures \ref{conj:chiyvir:rk2}, \ref{conj:chiyvir:rk3} to $y=1$ yields Conjectures \ref{conj:evir:rk2}, \ref{conj:evir:rk3} respectively.

For K3 surfaces, by using deformation equivalence as in Section \ref{sec:eulervir}, Conjectures \ref{conj:chiyvir:rk2} and \ref{conj:chiyvir:rk3} are reduced to the calculation of $\chi_y$-genera of Hilbert schemes of points carried out by the first named author and W.~Soergel \cite{GS}. Furthermore, we verified Conjecture \ref{conj:chiyvir:rk2} for most surfaces listed in Section \ref{sec:eulervir:rk2}, and several values of $c_1$, but up to a lower virtual dimension than in the case of virtual Euler characteristics. More precisely, for virtual $\chi_y$-genera, the upper bound for the virtual dimension is usually between $5$ and $25$. See \cite[Sect.~7]{GK1} for the precise list of verifications.
Similarly, we verified Conjecture \ref{conj:chiyvir:rk3} for several of the surfaces listed in Section \ref{sec:eulervir:rk3}, for certain values of $c_1$, with upper bound on the virtual dimension between $2$ and $10$ (depending on the case). See \cite[Sect.~2.4]{GK3} for the precise list of verifications. The method we use for these verifications is discussed in Section \ref{sec:struc}.

\subsection{Application: $K$-theoretic S-duality} \label{sec:KVW}

Recently, D.~Maulik and Thomas \cite{MT} considered refinements of Vafa-Witten theory, in particular the $K$-theoretic Vafa-Witten invariants of a smooth projective surface $S$ worked out in \cite{Tho}. Let $N^\perp:=N_S^H(\rho,L,c_2)$ be a moduli space of stable Higgs pairs on a smooth polarized surface $(S,H)$. Consider
\begin{equation*} 
\chi(N^\perp,\O^{\vir}_{N^\perp}) := \chi( R\Gamma(N^\perp, \O_{N^\perp}^{\vir})),
\end{equation*}
viewed as a graded character.
As we already mentioned in Section \ref{sec:VW}, Vafa-Witten invariants of $S$ can be seen as reduced Donaldson-Thomas invariants counting 2-dimensional sheaves on $X = \mathrm{Tot}(K_S)$ \cite{GSY2}. N.~Nekrasov and A.~Okounkov \cite{NO} showed that in Donaldson-Thomas theory it is natural to replace the virtual structure sheaf $\O_{N^\perp}^{\vir}$ by its twisted version
$$
\widehat{\O}_{N^\perp}^{\vir} :=\O_{N^\perp}^{\vir} \otimes  \sqrt{K_{N^\perp}^{\vir}},
$$
where $\sqrt{K_{N^\perp}^{\vir}}$ is a choice of square root of $K_{N^\perp}^{\vir} = \det(\Omega_{N^\perp}^{\vir})$. Over the fixed locus $(N^\perp)^{\C^*}$, this choice of square root exists and is canonical \cite[Prop.~2.6]{Tho}. The $K$-theoretic Vafa-Witten invariants are defined by \cite[(2.12), Prop.~2.13]{Tho}
\begin{equation*} 
\chi(N^\perp,\widehat{\O}^{\vir}_{N^\perp}) = \chi \Big((N^\perp)^{\C^*}, \frac{\O^{\vir}_{(N^\perp)^{\C^*}}}{\Lambda_{-1} (N^{\vir})^{\vee}} \otimes \sqrt{K_{N^\perp}^{\vir}} \Big|_{(N^\perp)^{\C^*}} \Big).
\end{equation*} 
Here we use the notation \cite[Sect.~4]{FG}
\begin{equation} \label{totalwedge}
\Lambda_y V := \sum_{i=0}^{\rk(V)} [\Lambda^i V] \, y^i \in K^0(Z)[y], \quad \Lambda_y (V-W) := \frac{\Lambda_y V}{\Lambda_y W} \in K^0(Z)[[y]]
\end{equation}
for any classes $V,W$ of locally free sheaves of finite rank in the Grothendieck group $K^0(Z)$ of locally free sheaves of finite rank on a finite type $\C$-scheme $Z$. We use that $\Lambda_y V$ is an invertible element in $K^0(Z)[[y]]$. Recall from Section \ref{sec:VW} that $\mathfrak{t}$ denotes a degree one character of the $\C^*$-scaling action on $N^{\perp}$. Furthermore, we define
$$
t := c_1^{\C^*}(\mathfrak{t}), \quad y=e^t.
$$ 
One can show that $\chi(N^\perp,\widehat{\O}^{\vir}_{N^\perp})$ is invariant under $y \leftrightarrow y^{-1}$ \cite[Prop.~2.27]{Tho}. We denote the generating series of $K$-theoretic Vafa-Witten invariants by
$$
\sfZ_{S,H,\rho,L}(q,y) \in \Q[y^{\frac{1}{2}},y^{-\frac{1}{2}}](\!(q)\!),
$$
which is defined as in \eqref{mathdefVW} with $\VW_S^H(\rho,L,c_2)$ replaced by $\chi(N^\perp,\widehat{\O}^{\vir}_{N^\perp})$, where $N^\perp:=N_S^H(\rho,L,c_2)$.

Recall that $(N^\perp)^{\C^*}$ contains the Gieseker-Maruyama moduli space $M:=M_S^H(\rho,L,c_2)$ as a component. Thomas showed that its contribution to $\chi(N^\perp,\widehat{\O}^{\vir}_{N^\perp})$ equals, up to sign, the normalized virtual $\chi_y$-genus of $M$, i.e.
$$
(-1)^{\vd(M)} \overline{\chi}_{-y}^{\vir}(M).
$$

Let $(S,H)$ be a smooth polarized surface satisfying $H_1(S,\Z) = 0$ and $p_g(S)>0$. Analogous to the case of virtual Euler characteristics, in \cite{GK3} we made conjectures for the monopole contribution to $\chi(N^\perp,\widehat{\O}^{\vir}_{N^\perp})$ for $\rho=2$ and $\rho=3$, which are obtained as follows from the unrefined case.
\begin{itemize}
\item For rank $\rho=2$: take line one of \eqref{(5.38)} and replace $4\Delta(q^2)$ by $$\frac{\phi_{-2,1}(q^2,y^2)\Delta(q^2)}{(y^{\frac{1}{2}} - y^{-\frac{1}{2}})^2} = (y^{\frac{1}{2}} + y^{-\frac{1}{2}})^2 q^2 \prod_{n=1}^{\infty}(1-q^{2n}y^2)^2 (1-q^{2n}y^{-2})^2 (1-q^{2n})^{20},$$ where $\phi_{-2,1}(q,y)$ is a weak Jacobi form of weight $-2$ and index $1$, replace $\theta_2(q)$ by $\theta_2(q,y)$, and replace $\theta_3(q)$ by $\theta_3(q,y)$.
\item For rank $\rho=3$: take \eqref{conj:mono:rk3} and replace $9\Delta(q^3)$ by $$\frac{\phi_{-2,1}(q^3,y^3)\Delta(q^3)}{(y^{\frac{1}{2}} - y^{-\frac{1}{2}})^2},$$ replace $\Theta_{A_2,(1,0)}(q^{\frac{1}{2}})$ by $\Theta_{A_2,(1,0)}(q^{\frac{1}{2}},y)$, replace $W_{\pm}(q^{\frac{1}{2}})$ by $W_{\pm}(q^{\frac{1}{2}},y)$, where $W_{\pm}(x,y)$ are the solutions in $\omega$ of 
\begin{align*}
\omega^2 - (W(x,y)^2+3W(x,y)W(x,1)) \, \omega +W(x,y)+3W(x,1) = 0,
\end{align*}
where  $W(x,y) := \frac{\Theta_{A_2, (0,0)}(x,y)}{\Theta_{A_2, (1,0)}(x,y)}$.
\end{itemize}

By \cite{Laa1}, Theorem \ref{thm:Laa} also holds for the $(1, \ldots, 1)$ contribution to $K$-theoretic Vafa-Witten invariants; the only modification needed is that the universal functions have coefficients in $\Q(y^{\frac{1}{2}})$ instead of $\Q$. Using this, Laarakker \cite{Laa1} verified directly that the first few terms of these two monopole conjectures are correct. More precisely, he checked the first 15 terms for $\rho=2$ and the first 11 terms for $\rho=3$. 

Based on our conjectural formulae, we found a $K$-theoretic S-duality transformation, which we conjecture to be true for any prime rank $\rho$.
\begin{theorem} \cite[Prop.~4.8, 4.10]{GK3}
Our conjectural formulae for $\sfZ_{S,H,\rho,c_1}(q,y)$ for $\rho=2$ and $\rho=3$ (given in Conjectures \ref{conj:chiyvir:rk2}, \ref{conj:chiyvir:rk3}, and this section) are the Fourier expansions of meromorphic functions $\sfZ_{S,H,\rho,c_1}(\tau,z)$ on $\mathfrak{H} \times \C$ satisfying\footnote{The meaning of our generating functions for non-algebraic $c_1$ is as described in Remark \ref{non-alg}.}
\begin{align*} 
\begin{split}
\sfZ_{S,H,\rho,c_1}(\tau,z)\Big|_{(\tau+1,z)} =& \, (-1)^{\rho \chi(\O_S)} e^{\frac{\pi i \rho}{12} K_S^2} e^{-\frac{\pi i (\rho-1)}{\rho} c_1^2} \sfZ_{S,H,\rho,c_1}(\tau,z), \\
\frac{\sfZ_{S,H,\rho,c_1}(\tau,z)}{(y^{\frac{1}{2}} - y^{-\frac{1}{2}})^{\chi(\O_S)}}\Big|_{(-1/\tau,z/\tau)} =& \, (-1)^{\rho \chi(\O_S)} \rho^{1-\frac{e(S)}{2}} i^{-\frac{K_S^2}{2}} \tau^{-5\chi(\O_S)+\frac{K_S^2}{2}} e^{\frac{2 \pi i z^2}{\tau}\Big(-\frac{\rho}{2} \chi(\O_S) - \frac{\rho(\rho^2-1)}{24} K_S^2 \Big)} \\
& \, \times \sum_{[a]} e^{\frac{2 \pi i}{\rho} a c_1} \frac{\sfZ_{S,H,\rho,a}(\tau,z)}{(y^{\frac{1}{2}} - y^{-\frac{1}{2}})^{\chi(\O_S)}},
\end{split}
\end{align*}
where the sum is over all $[a] \in H^2(S,\Z) / \rho H^2(S,\Z)$, $q = e^{2 \pi i \tau}$, and $y = e^{2 \pi i z}$.
\end{theorem}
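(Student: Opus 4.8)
The plan is to check directly that the explicit $q$-series defined by the rank $2$ and rank $3$ conjectural formulae — namely the sum $\sfZ_{S,H,\rho,c_1} = \sfZ^{\inst}_{S,H,\rho,c_1} + \sfZ^{\mono}_{S,H,\rho,c_1}$ of the instanton part (Conjectures \ref{conj:chiyvir:rk2}, \ref{conj:chiyvir:rk3}) and the monopole part (the refinement of line one of \eqref{(5.38)}, resp.\ of \eqref{conj:mono:rk3}) — are Fourier expansions of meromorphic functions on $\mathfrak{H}\times\C$ obeying the two displayed functional equations. Since $SL(2,\Z)$ is generated by $T:(\tau,z)\mapsto(\tau+1,z)$ and $S:(\tau,z)\mapsto(-1/\tau,z/\tau)$, and the two equations are exactly the $T$- and $S$-relations, it suffices to treat each generator. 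The structural point is that, after dividing by $(y^{1/2}-y^{-1/2})^{\chi(\O_S)}$, the function $\sfZ_{S,H,\rho,c_1}(\tau,z)$ is assembled from finitely many Jacobi-form building blocks of known weight and index: the refined eta-products $\prod_{n\ge 1}(1-x^{2n}y)(1-x^{2n}y^{-1})(1-x^{2n})^{10}$, the classical Jacobi theta functions $\theta_2(x,y),\theta_3(x,y)$, the weak Jacobi form $\phi_{-2,1}$, and the refined lattice theta functions $\Theta_{A_2,\bullet}(x,y)$, $\Theta_{A_2^\vee,\bullet}(x,y)$ — together with the finite Seiberg--Witten sum over $H^2(S,\Z)$ and the algebraic functions $Z_\pm(x,y),W_\pm(x,y)$. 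Meromorphy and convergence are immediate from these presentations, so the content is the two functional equations, and it is essential to transform the full sum $\sfZ^{\inst}+\sfZ^{\mono}$, not either piece alone.

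For the $T$-equation I would simply collect the multiplier system of each factor under $\tau\mapsto\tau+1$: the eta-products and $\phi_{-2,1}$ contribute roots of unity depending on $\chi(\O_S)$ and $K_S^2$; the classical theta functions pick up eighth roots of unity; the lattice theta functions acquire phases governed by the quadratic form on the relevant coset of the discriminant group $A_2^\vee/A_2\cong\Z/3$; and the Seiberg--Witten weights $\epsilon^{(a-b)c_1}$ (resp.\ $(-1)^{ac_1}$) contribute a phase controlled by $c_1^2$ modulo $2\rho$. Using Noether's formula $e(S)=12\chi(\O_S)-K_S^2$ to convert freely between $e(S)$, $\chi(\O_S)$ and $K_S^2$, these assemble into precisely $(-1)^{\rho\chi(\O_S)}\,e^{\frac{\pi i\rho}{12}K_S^2}\,e^{-\frac{\pi i(\rho-1)}{\rho}c_1^2}$. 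This step is bookkeeping and I expect no obstruction.

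The $S$-equation is the crux, and I would organise it into three coordinated parts. First, the scalar analytic factors (the eta-products, $\theta_2,\theta_3$, $\phi_{-2,1}$, and the overall $(y^{1/2}-y^{-1/2})^{\chi(\O_S)}$) transform with their known weights and indices; summing these and simplifying via Noether's formula produces the power $\tau^{-5\chi(\O_S)+K_S^2/2}=\tau^{-e(S)/2+\chi(\O_S)}$, the scalars $i^{-K_S^2/2}$ and $\rho^{1-e(S)/2}$, and the Gaussian $e^{\frac{2\pi i z^2}{\tau}(-\frac{\rho}{2}\chi(\O_S)-\frac{\rho(\rho^2-1)}{24}K_S^2)}$ as the accumulated Jacobi index. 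Second, I would regard the $(a,b)$-sum with its weights $\SW(a)\SW(b)$ and exponentiated factors $Z_+^{ab}$, $Z_-^{(K_S-a)(K_S-b)}$ together with the lattice theta functions as a single Siegel--Narain-type theta function of the enlarged lattice built from $(H^2(S,\Z),\cup)$ and the root lattice; Poisson summation on this enlarged lattice simultaneously swaps $A_2^\vee\leftrightarrow A_2$ (this is why the instanton part uses $A_2^\vee$ and the monopole part uses $A_2$) and converts the sum over $H^2(S,\Z)$ into a sum over cosets $[a]\in H^2(S,\Z)/\rho H^2(S,\Z)$ weighted by the Gauss sum $e^{\frac{2\pi i}{\rho}ac_1}$; the unimodularity of $(H^2(S,\Z),\cup)$, valid since $H_1(S,\Z)=0$, makes this transformation clean. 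Third, I would verify that the algebraic root functions $Z_\pm(x,y)$ map to $W_\pm(x,y)$: by Vieta their symmetric functions are the explicit theta ratios $Z_+Z_-=Z(x,y)+3Z(x,1)$ and $Z_++Z_-=Z(x,y)^2+3Z(x,y)Z(x,1)$, with $Z(x,y)=\Theta_{A_2^\vee,(0,0)}/\Theta_{A_2^\vee,(0,1)}$, and under $S$ these map to the corresponding expressions in $W(x,y)=\Theta_{A_2,(0,0)}/\Theta_{A_2,(1,0)}$, i.e.\ to the symmetric functions of $W_\pm$. Matching the exponents then realises the expected interchange of instanton and monopole contributions.

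The main obstacle I anticipate lies in the third part together with the exact constant bookkeeping. The functions $Z_\pm,W_\pm$ solve a quadratic with modular coefficients but are not themselves single-valued modular objects, so one must check that the branch labelling is genuinely preserved (not merely permuted) under $S$ and that no spurious square-root sign appears; this is strictly more delicate than the unrefined case of \cite[Prop.~4.10]{GK3}, because the refinement variable enters the quadratic through $Z(x,y)$ and $Z(x,1)$ simultaneously. In parallel, reproducing the exact scalar $\rho^{1-e(S)/2}i^{-K_S^2/2}$ and the Gaussian index demands careful Gauss- and Dedekind-sum reciprocity over $H^2(S,\Z)$ with the variable $z$ correctly coupled to the lattice Poisson summation. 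Throughout, the specialisation $y=1$ (i.e.\ $z=0$) gives a strong consistency check: the refined functional equations must collapse to the already established unrefined S-duality \eqref{Sdualitystrong}, and I would use this both as a scaffold and as a final verification of signs and prefactors.
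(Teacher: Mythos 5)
Your plan is correct and follows essentially the same route the paper indicates: treating the generators $T$ and $S$ separately, tracking multiplier systems via Noether's formula, using theta-transformation/Gauss-sum identities on the unimodular lattice $(H^2(S,\Z),\cup)$ together with the Seiberg--Witten duality and the $A_2\leftrightarrow A_2^\vee$ swap that exchanges the instanton and monopole branches, and checking that the quadratic satisfied by $Z_\pm$ maps to that of $W_\pm$ via Vieta. The survey only lists these ingredients (Seiberg--Witten invariants, the lattice $(H^2(S,\Z),\cup)$, Gauss and Dedekind sums, lattice theta functions) without details, and your proposal matches them, including the correct identification of the branch-labelling of $Z_\pm,W_\pm$ and the exact scalar prefactors as the genuinely delicate points.
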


We perform further checks of the $K$-theoretic S-duality transformation in \cite[Sect.~4]{GK3}, namely for $\rho=1$, and for K3 surfaces and arbitrary prime rank  $\rho$. On the physics side, refined BPS indices were recently studied by S.~Alexandrov, J.~Manschot, and B.~Pioline \cite{AMP}.

\subsection{Virtual elliptic genera} \label{sec:ellvir}

The virtual elliptic genus of a proper $\C$-scheme $Z$ with perfect obstruction theory is defined by \cite{FG}
\begin{align*}
Ell^\vir(Z) &:= y^{-\frac{\vd(Z)}{2}} \sum_{p \geq 0} (-y)^p \, \chi(Z,\cE(T_Z^{\vir}) \otimes \Lambda^p \Omega_{Z}^{\vir} \otimes \O_Z^{\vir}), \\
\cE(T_Z^{\vir}) &:= \bigotimes_{n=1}^{\infty} \Lambda_{-yq^n} \Omega_Z^{\vir} \otimes \Lambda_{-y^{-1}q^n} T_Z^{\vir} \otimes \Sym_{q^n} (T_Z^{\vir} \oplus \Omega_Z^{\vir}),
\end{align*}
where $\Lambda_{y} V$ was defined in \eqref{totalwedge} and $\Sym_y V=\Lambda_{-y}(-V)$. Virtual elliptic genus refines complex elliptic genus, which has an interesting history (cf.~\cite{Hir, Wit, Kri}) that we will not discuss. When $Z$ is smooth and $T_Z^{\vir} = T_Z$, we write $Ell^\vir(Z) = Ell(Z)$.

Just like \eqref{eulerrk1} describes the Euler characteristics of $S^{[n]}$ in terms of $e(S)$, one can express the elliptic genera of $S^{[n]}$ in terms of $Ell(S)$. This is achieved by a famous formula originating from string theory in work of Dijkgraaf, G.~Moore, E.~Verlinde, and H.~Verlinde \cite{DMVV} and proved by L.~Borisov and A.~Libgober \cite{BL1, BL2}. In order to describe the formula, we need the notion of a Borcherds lift. For a formal series
$$
f(q,y) = \sum_{m \geq 0, n \in \Z} c_{m,n} q^m y^n,
$$
and any $a \in \Z$, we define a Borcherds type lift by
\begin{align*}
\begin{split} \label{L_a}
\sfL_a(f) := \prod_{l>0,m\geq 0,n \in \Z} (1-p^{al} q^m y^n)^{c_{lm,n}}.
\end{split}
\end{align*}
We set $\mathsf{L}(f) := \mathsf{L}_1(f)$. R.~Borcherds original definition \cite{Bor}, for meromorphic functions $f : \mathfrak{H} \times \C \rightarrow \C$, is given in terms of Hecke operators. The above formal version suffices for our purposes.
Later in this section, we will also encounter Borcherds type lifts of
$$
f^{\ev}(q,y) := \sum_{m \geq 0, n \in \Z} c_{2m,n} q^{2m} y^{n}.
$$
In addition, we will allow $y$ to have half-integer powers. 

For any smooth projective surface $S$, the Dijkgraaf-Moore-Verlinde-Verlinde formula states\footnote{The original formula in \cite{DMVV} is stated for orbifold elliptic genera of symmetric products $S^{(n)} := S^{n} / \mathfrak{S}_n$, where $\mathfrak{S}_n$ denotes the symmetric group of degree $n$.}
\begin{equation*} 
\sum_{n=0}^{\infty} Ell(S^{[n]}) \, p^n = \frac{1}{\sfL(Ell(S))}.
\end{equation*}
When $S$ is a K3 surface, this formula is of particular interest. The elliptic genus of a K3 surface is given by
\begin{equation*} 
Ell(\mathrm{K3}) = 2 \phi_{0,1}(q,y),
\end{equation*}
where $\phi_{0,1}(q,y)$ is a weak Jacobi form of weight 0 and index 1. (Together with $\phi_{-2,1}(q,y)$, encountered in the previous section, $\phi_{0,1}(q,y)$ generates the ring of weak Jacobi forms of even weight and integer index as a free algebra over the ring of modular forms for $\mathrm{SL}(2,\Z)$.) Moreover, V.~Gritsenko and V.~Nikulin \cite{GN} proved
\begin{equation*} \label{GN}
\sfL(2 \phi_{0,1}(q,y)) = \frac{\chi_{10}(p,q,y)}{p \, \Delta(q) \, \phi_{-2,1}(q,y)},
\end{equation*}
where $\chi_{10}(p,q,y)$ is the Igusa cusp form of weight 10 (a genus 2 Siegel modular form). Taken together, one obtains
$$
\sum_{n=0}^{\infty} Ell(\mathrm{K3}^{[n]}) \, p^{n-1} =  \frac{\Delta(q) \, \phi_{-2,1}(q,y)}{\chi_{10}(p,q,y)}.
$$

We present a rank 2 analog of the DMVV formula, which involves Borcherds type lifts of quasi- and weak Jacobi forms build from the following Jacobi-Eisenstein series
\begin{align*}
G_{1,0}(q,y)&:=-\frac{1}{2} \frac{y + 1}{y - 1}+\sum_{n=1}^{\infty} \sum_{d | n} (y^d-y^{-d}) q^n, \\
G_{k,0}(q,y)&:=\Big(y\frac{\partial}{\partial y}\Big)^{k-1}G_{1,0}(q,y), \quad \forall k>1.
\end{align*}
We define
\begin{align*}
\phi_{0,\frac{k}{2}}(q,y):=\,&G_{k,0}(q,y) \phi_{-2,1}(q,y)^{\frac{k}{2}}, \quad \forall k\ne 2.
\end{align*}

\begin{conjecture} \cite[Conj.~1.1, 7.7]{GK2} \label{conj:ellvir:rk2}
Let $(S,H)$ be a smooth polarized surface satisfying $b_1(S) = 0$ and $p_g(S)>0$. Suppose $M:=M_S^H(2,c_1,c_2)$ contains no strictly Gieseker $H$-semistable sheaves. Then $Ell^{\vir}(M)$ equals the coefficient of $p^{\vd(M)}$ of 
\begin{align*}
4 \Bigg(\frac{1}{2} A^{\elg}(p,q,y) \Bigg)^{\chi(\O_S)}\Bigg(2  B^{\elg}(p,q,y) \Bigg)^{K_{S}^2}  \sum_{a \in H^2(S,\Z)} \SW(a)(-1)^{a c_1} \Bigg(\frac{B^{\elg}(-p,q,y) }{B^{\elg}(p,q,y) }\Bigg)^{a K_S},
\end{align*}
where 
\begin{align*}
A^{\elg}(p,q,y) &:=  \frac{1}{\sfL_2(\phi_{0,1})} = \Bigg( \frac{p^2 \, \Delta(q) \, \phi_{-2,1}(q,y)}{\chi_{10}(p^2,q,y)} \Bigg)^{\frac{1}{2}}, \\
B^{\elg}(p,q,y) &:=  \frac{\sfL_4(2\phi_{0,\frac{1}{2}} \phi_{0,\frac{3}{2}}) \sfL(-2 \phi_{0,\frac{1}{2}})}{\sfL_2\big(-2 \phi_{0,\frac{1}{2}}^{\ev}|_{(q^{\frac{1}{2}},y)}-\phi_{0,\frac{1}{2}}|_{(q^2,y^2)}+2\phi_{0,\frac{1}{2}}^2 \big) }.
\end{align*}
\end{conjecture}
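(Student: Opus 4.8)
Since the statement is a conjecture, the realistic route is to reduce it, via universality, to the determination of finitely many universal power series and then to pin those series down. The plan is to first express the generating function $\sum_{c_2} Ell^{\vir}(M_S^H(2,c_1,c_2))\, p^{\vd}$ through Mochizuki's formula (Section~\ref{sec:struc}). Writing the virtual elliptic genus as a $K$-theoretic invariant built from $T_M^{\vir}$, and feeding the class $\cE(T_M^{\vir})\otimes\Lambda^{\bull}\Omega_M^{\vir}\otimes\O_M^{\vir}$ into Mochizuki's descendent formula, one obtains an expression as a sum over pairs of Seiberg-Witten basic classes of a residue in an auxiliary equivariant variable of integrals over products $S^{[n_1]}\times S^{[n_2]}$ of Hilbert schemes of points, the integrand being a tautological $K$-theoretic insertion.

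Next I would establish universality. The integrals over $S^{[n_1]}\times S^{[n_2]}$ are of the type governed by the Ellingsrud-G\"ottsche-Lehn theorem \cite[Thm.~4.1]{EGL}: each is a universal polynomial in $L^2, LK_S, K_S^2, \chi(\O_S)$ and in the intersection numbers of the basic classes. Summing over $n_1,n_2$ and over the basic classes, then taking the residue, the multiplicativity of the tautological structure forces the generating function into the shape
\begin{align*}
4\Bigl(\tfrac12 A^{\elg}\Bigr)^{\chi(\O_S)}\bigl(2B^{\elg}\bigr)^{K_S^2}\sum_{a}\SW(a)(-1)^{ac_1}\Bigl(\tfrac{B^{\elg}(-p,q,y)}{B^{\elg}(p,q,y)}\Bigr)^{aK_S}
\end{align*}
for universal series $A^{\elg},B^{\elg}\in\Q[y^{\pm\frac12}][[p,q]]$ independent of $S$ and $c_1$. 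This is precisely the elliptic-genus analogue of Theorem~\ref{thm:univ}, and I expect this step to go through cleanly once the insertion is recorded in the correct $K$-theoretic form.

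It then remains to identify $A^{\elg}$ and $B^{\elg}$. The series $A^{\elg}$ (the $K_S=0$ part) is pinned down by K3 surfaces: there $M$ is smooth of expected dimension and deformation equivalent to $S^{[\vd/2]}$, so $Ell^{\vir}(M)=Ell(\mathrm{K3}^{[\vd/2]})$, and the Dijkgraaf-Moore-Verlinde-Verlinde formula together with the Gritsenko-Nikulin evaluation of $\sfL(2\phi_{0,1})$ through the Igusa cusp form $\chi_{10}$ yields $A^{\elg}=1/\sfL_2(\phi_{0,1})$. To capture $B^{\elg}$ and the basic-class ratio I would use surfaces with $K_S\neq0$ carrying nontrivial Seiberg-Witten invariants---minimal surfaces of general type (basic classes $0,K_S$) and elliptic surfaces $E(n)$---and compute the low-order coefficients of the universal series by toric localization on the products of Hilbert schemes, as in Section~\ref{sec:toric}, matching them against the conjectured Borcherds lifts.

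The main obstacle is that Mochizuki's formula can only be evaluated to finite order in $p$ and $q$: the residue of the Hilbert-scheme integrals has no known closed form, so one cannot literally sum the series. A complete proof therefore requires an extra input beyond the numerics---either an all-orders evaluation of the relevant Nakajima-type integrals (in the spirit of a Carlsson-Okounkov vertex computation) or a modularity and S-duality constraint strong enough that finitely many coefficients determine $B^{\elg}$ uniquely. The hardest single point is the closed form of $B^{\elg}$: unlike $A^{\elg}$, it is not visible from any one accessible geometry and must be reconstructed from the ring of weak Jacobi forms generated by $\phi_{0,\frac12}$ and $\phi_{0,\frac32}$ together with the demand that the full partition function transform correctly---which is exactly why the statement remains conjectural.
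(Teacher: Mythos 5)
Your proposal follows essentially the same route the paper takes to support this (still open) conjecture: reduce $Ell^{\vir}(M)$ to descendent Donaldson invariants via the virtual Hirzebruch--Riemann--Roch theorem, apply Mochizuki's formula and the EGL-type universality to get a fixed universal shape with finitely many series, pin down $A^{\elg}$ on K3 surfaces via deformation equivalence, DMVV, and Gritsenko--Nikulin, and determine $B^{\elg}$ only to finite order by toric localization on products of Hilbert schemes. You also correctly identify why the statement remains conjectural --- the universal series are only accessible coefficient by coefficient --- which matches the paper's own assessment.
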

Specializing Conjecture \ref{conj:ellvir:rk2} to $q=0$ yields Conjecture \ref{conj:chiyvir:rk2}.

As in Section \ref{sec:eulervir}, Conjecture \ref{conj:ellvir:rk2} holds for K3 surfaces by deformation equivalence and Borisov-Libgober's result. Consider the following list of surfaces: \\

\noindent blow-ups up K3 surfaces in one point, elliptic surfaces of type $E(3), E(4), E(5), E(6)$, double covers of $\PP^2$ branched along a smooth octic, double covers of $\PP^1 \times \PP^1$ branched along a smooth curve of bidegree $(6,6)$ or $(6,8)$, double covers of the Hirzebruch surface $\FF_1 \rightarrow \PP^1$ branched along a smooth connected curve in the complete linear system $|\O_{\FF_1}(6B+10F)|$ where $B$ is the class of the section satisfying $B^2=-1$ and $F$ is a fibre class, smooth quintics in $\PP^3$. \\

For each of the surfaces in this list, we verified Conjecture \ref{conj:ellvir:rk2} for certain values of $c_1$ (sometimes with restrictions on $H$) and up to a certain virtual dimension, usually with upper bound between $8$ and $20$, as detailed in \cite[Sect.~8]{GK2}. The method we use for these verifications is discussed in Section \ref{sec:struc}.

\subsection{Virtual cobordism classes}

Finally, we turn our attention to algebraic cobordism theory \cite{LM, LP}. Denote the algebraic cobordism ring over a point with rational coefficients by
$$
\Omega_* := \bigoplus_{d=0}^{\infty} \Omega_d(\pt) \otimes_\Z \Q.
$$
Then $\Omega_*$ is isomorphic to the polynomial ring freely generated by the cobordism classes of $\PP^d$ for all $d \geq 0$. The graded piece $\Omega_d(\pt) \otimes_\Z \Q$ has a basis
$$
v^I := v_1^{i_1} \cdots v_d^{i_d}, \ \mathrm{where} \ I = (i_1, \ldots, i_d) \in \Z_{\geq 0}^d \ \mathrm{and} \ |I|=\sum k i_k = d.
$$
Concretely, the cobordism class $[Z]$ of a $d$-dimensional smooth projective variety $Z$ is 
$$
[Z] = \int_Z \prod_{i=1}^{d}\big(1+ \sum_{k=1}^{\infty} x_i^k v_k\big),
$$
where $x_1, \ldots, x_d$ are the Chern roots of $T_Z$. It follows that the class $[Z]$ is determined by the collection of all possible Chern numbers of $Z$ (i.e.~all possible intersection numbers obtained by capping monomials in Chern classes of $T_Z$ with $[Z]$). 

The cobordism classes of Hilbert schemes of points on surfaces were studied in \cite{EGL}. In loc.~cit., it is shown that there exist two universal functions $A, B \in 1+\Q[v_1,v_2,\ldots][\![p]\!]$ such that 
$$
\sum_{n=0}^{\infty} [S^{[n]}] \, p^{n} = A^{\chi(\O_S)} B^{K_S^2},
$$
for any smooth projective surface $S$. Consequently, $A^2$ is the generating function of cobordism classes of $\mathrm{K3}^{[n]}$. We now present a conjectural rank 2 analog of this formula.

Let $Z$ be a projective $\C$-scheme with a perfect obstruction theory. J.~Shen \cite{She} constructed a virtual cobordism class 
$$
[Z]^{\vir}_{\Omega_*} \in \Omega_{\vd}(Z),
$$
where $\vd = \rk T_Z^{\vir}$ (see also \cite{CFK} and \cite{LS} in the context of dg-manifolds and derived schemes). Denote by $\pi : Z \rightarrow \pt$ projection to a point. Shen proved that $\pi_* [Z]^{\vir}_{\Omega_*}$ is determined by the collection of virtual Chern numbers of $Z$ (i.e.~all possible intersection numbers obtained by capping monomials in Chern classes of $T_Z^{\vir}$ with $[Z]^{\vir}$). More precisely, let $T_Z^\vir = [E_0 \rightarrow E_1]$ be a resolution by vector bundles and denote the Chern roots of $E_0$ by $x_1, \ldots, x_n$ and the Chern roots of $E_1$ by $u_1, \ldots, u_m$. Then
\begin{equation} \label{vircobChern}
\pi_* [Z]^{\vir}_{\Omega_*} = \int_{[Z]^\vir} \frac{\prod_{i=1}^{n}\big(1+ \sum_{k=1}^{\infty} x_i^k v_k\big)}{\prod_{j=1}^{m}\big(1+ \sum_{k=1}^{\infty} u_j^k v_k\big)}.
\end{equation}
\begin{conjecture}  \cite[Conj.~1.2, 7.7]{GK2} \label{conj:vircob:rk2}
There exists a power series $B^{\cob}(p,\bfv) \in 1+\Q[v_1,v_2,\ldots][\![p]\!]$ with the following property. Let $(S,H)$ be a smooth polarized surface satisfying $b_1(S) = 0$ and $p_g(S)>0$. Suppose $M:=M_S^H(2,c_1,c_2)$ contains no strictly Gieseker $H$-semistable sheaves. Then $\pi_*[M]^{\vir}_{\Omega_*}$ equals the coefficient of $p^{\vd(M)}$ of 
\begin{align*}
4 \Bigg(\frac{1}{2} A^{\cob}(p,\bfv) \Bigg)^{\chi(\O_S)}\Bigg(2  B^{\cob}(p,\bfv) \Bigg)^{K_{S}^2}  \sum_{a \in H^2(S,\Z)} \SW(a)(-1)^{a c_1} \Bigg(\frac{B^{\cob}(-p,\bfv) }{B^{\cob}(p,\bfv) }\Bigg)^{a K_S},
\end{align*}
where 
\begin{align*}
A^{\cob}(p,\bfv) := \Big( \sum_{n=0}^{\infty} [\mathrm{K3}^{[n]}] \, p^{2n} \Big)^{\frac{1}{2}}.
\end{align*}
\end{conjecture}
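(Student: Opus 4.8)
The plan is to follow the same Mochizuki-formula strategy that underlies the verifications of Conjectures \ref{conj:evir:rk2}, \ref{conj:chiyvir:rk2}, and \ref{conj:ellvir:rk2}, but carried out at the level of the full cobordism-valued generating series. The first step is to invoke Shen's result: $\pi_*[M]^{\vir}_{\Omega_*}$ is determined by the collection of all virtual Chern numbers of $M$, i.e.\ by all integrals $\int_{[M]^{\vir}} P\big(c(T_M^{\vir})\big)$ with $P$ a polynomial in the virtual Chern classes. Via the expression \eqref{vircobChern}, assembling these numbers into a series in $\bfv=(v_1,v_2,\dots)$ reduces the conjecture to controlling every such virtual Chern number simultaneously. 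This is precisely where the cobordism case is genuinely harder than the $\chi_y$- or elliptic-genus cases: there one fixes a single universal combination of Chern classes, whereas here one must handle the entire infinite family uniformly.

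The second step is to feed each virtual Chern number into Mochizuki's formula, packaged as the universal function of Theorem \ref{thm:univ}. This rewrites $\int_{[M]^{\vir}} P(c(T_M^{\vir}))$ as a sum over Seiberg-Witten basic classes $a$, where each summand is a residue of an expression that can be evaluated as an intersection number on products of Hilbert schemes $S^{[n_1]}\times S^{[n_2]}$. By the universality of \cite{EGL}, extended to cobordism-valued coefficients as in Shen's framework, each such intersection number is a universal polynomial in $K_S^2$ and $\chi(\O_S)$, with $c_1$ entering only through the weights $(-1)^{ac_1}$ and the exponent $aK_S$. Multiplicativity of these universal expressions under disjoint unions of surfaces then forces the series to take the exponential shape $(\tfrac12 A^{\cob})^{\chi(\O_S)}(2B^{\cob})^{K_S^2}$ times the Seiberg-Witten sum, with $A^{\cob},B^{\cob}\in 1+\Q[v_1,v_2,\dots][\![p]\!]$ independent of $S$.

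The third step is to pin down $A^{\cob}$. Isolating the $\chi(\O_S)$-dependence by setting the $K_S^2$-exponent to zero, one evaluates on a K3 surface: there $K_S=0$, the unique basic class is $a=0$ with $\SW(0)=1$, and the Seiberg-Witten sum collapses to $1$, so the conjectural series reduces to $(A^{\cob})^2$. Since for K3 the moduli space is smooth of expected dimension and deformation equivalent to $S^{[\vd/2]}$, the virtual cobordism class equals the genuine (deformation-invariant) cobordism class $[\mathrm{K3}^{[\vd/2]}]$, and matching with the EGL series $\sum_n[\mathrm{K3}^{[n]}]\,p^{2n}$ yields $A^{\cob}(p,\bfv)=\big(\sum_n[\mathrm{K3}^{[n]}]p^{2n}\big)^{1/2}$, exactly as asserted. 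The series $B^{\cob}$ is then determined order by order in $p$ by the residue computations on surfaces with $K_S^2\neq0$; unlike $A^{\cob}$, it is not expected to admit a simple closed form, which is why the statement only claims its existence.

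The main obstacle is the second step. Mochizuki's formula expresses the invariants as iterated residues arising from a torus localization on the Hilbert-scheme factors, and the combinatorics of these residues is intricate; reorganizing the resulting sum into the clean product $(\tfrac12 A^{\cob})^{\chi(\O_S)}(2B^{\cob})^{K_S^2}$ times a single Seiberg-Witten sum is the crux, and here it must be carried out uniformly in all the cobordism variables $v_k$ at once. A complete all-orders proof would require a closed-form evaluation of these residues valued in $\Omega_*\otimes\Q[\![p]\!]$, which is beyond the reach of the present methods; absent such an evaluation one can, exactly as for the other conjectures in this survey, only verify the identity coefficient-by-coefficient up to a bounded virtual dimension, each coefficient requiring finitely many virtual Chern numbers.
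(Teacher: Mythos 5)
The statement you are addressing is a conjecture; the paper offers no proof of it, only finitely many numerical verifications, so your proposal can only be judged as a description of the verification strategy rather than as a proof. Read that way, it follows the paper's own route: Shen's theorem reduces $\pi_*[M]^{\vir}_{\Omega_*}$ to the collection of virtual Chern numbers via \eqref{vircobChern}, each of which is a descendent Donaldson invariant amenable to Mochizuki's formula; the universality and multiplicativity arguments of Section \ref{sec:struc} then produce a cobordism analogue of Theorem \ref{thm:univ}; $A^{\cob}$ is fixed by K3 surfaces (where $K_S=0$, the Seiberg--Witten sum collapses, and the moduli spaces are deformation equivalent to $\mathrm{K3}^{[\vd/2]}$); and $B^{\cob}$ is extracted order by order from surfaces with $K_S^2\neq 0$. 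Your closing admission that only coefficient-by-coefficient checks up to bounded virtual dimension are possible is exactly the paper's position.

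One claim in your second step is overstated and should be corrected. Universality and multiplicativity do \emph{not} force the collapsed shape $4\,(\tfrac12 A^{\cob})^{\chi(\O_S)}(2B^{\cob})^{K_S^2}\sum_a \SW(a)(-1)^{ac_1}\big(B^{\cob}(-p,\bfv)/B^{\cob}(p,\bfv)\big)^{aK_S}$. What they provably deliver is the analogue of Theorem \ref{thm:univ}: a residue in $t$ of a sum over decompositions $c_1=a_1+a_2$, weighted by \emph{seven} universal series raised to the powers $a_1^2$, $a_1a_2$, $a_2^2$, $a_1K_S$, $a_2K_S$, $K_S^2$, $\chi(\O_S)$, together with an explicit leading factor. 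The passage from that theorem-level expression to the two-function formula of Conjecture \ref{conj:vircob:rk2} --- in particular that a single series $B^{\cob}$ governs both the $K_S^2$ exponent and the Seiberg--Witten weights through the ratio $B^{\cob}(-p,\bfv)/B^{\cob}(p,\bfv)$, and that the normalizations $4$, $\tfrac12$, $2$ come out as stated --- is precisely the conjectural content, confirmed only in examples. Your own final paragraph concedes that this reorganization is the crux, but the word ``forces'' in step two is not justified and should be removed; likewise, the dependence on $a_1,a_2$ in the universal expression is richer than ``$c_1$ entering only through $(-1)^{ac_1}$ and the exponent $aK_S$'' until one invokes $a(a-K_S)=0$ for basic classes and the conjectural collapse itself.
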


By the virtual Hirzebruch-Riemann-Roch theorem of \cite{CFK, FG}, the virtual elliptic genera $Ell^{\vir}(M)$ of Conjecture \ref{conj:ellvir:rk2} can be expressed in terms of $q,y$ and virtual Chern numbers of $M$. As such, the universal functions $A^{\cob}(p,\bfv)$ and $B^{\cob}(p,\bfv)$ in Conjecture \ref{conj:vircob:rk2} determine the universal functions of Conjecture \ref{conj:ellvir:rk2}. Since we have no explicit formulae for $A^{\cob}(p,\bfv)$ and $B^{\cob}(p,\bfv)$, Conjecture \ref{conj:vircob:rk2} does not imply Conjecture \ref{conj:ellvir:rk2}. 

The universal function $A^{\cob}(p,\bfv)$ is determined modulo $p^{16}$ by calculations in \cite{EGL}. Assuming Conjecture \ref{conj:vircob:rk2} holds for the blow-up of an elliptic K3 surface and certain values of $H,c_1$, we determined  $B^{\cob}(p,\bfv)$ modulo $p^{14}$ and for $v_6=v_7 =\ldots =0$. The first few coefficients are
\begin{align*}
\frac{1}{B^{\cob}(p,\bfv)}=1 &+ 2v_1p - 16v_3p^3+ 4(v_1^4-3v_2v_1^2 + v_3v_1)p^4 \\ 
&+ 4(v_1^5- 6v_1^3v_2-12v_1^2v_3+9v_1v_2^2 + 22 v_2v_3   + 38v_5 )p^5+O(p^6).
\end{align*}
Conjecture \ref{conj:vircob:rk2} is verified in the same cases, and usually up to the same virtual dimension, as in Section \ref{sec:ellvir}. The method for the verifications is discussed in Section \ref{sec:struc}.

\begin{remark}
Remarkably, for any example of a non-zero virtual Chern number 
$$
\int_{[M]^{\vir}}c_{i_1}(T_M^{\vir})\cdots c_{i_k}(T_M^{\vir})
$$ 
that we calculated, we found an interesting positivity result. When $K_S^2>0$ and $c_2(S) > 0$, the virtual Chern number appears to have sign $(-1)^{\vd(M)}$.
This is similar to \cite[Rem.~5.5]{EGL}, where it is observed that all Chern numbers of $S^{[n]}$ are polynomials in $K_S^2$ and $c_2(S)$ with positive coefficients at least for $n\le 7$. 
\end{remark}

\section{Virtual Verlinde numbers} \label{sec:verlinde}

Let $C$ be a smooth projective curve of genus $g \geq 2$ and denote by $M$ the moduli space of rank 2 semistable vector bundles $E$ on $C$ with $\det E \cong \O_C$. The Picard group of $M$ is generated by the so-called determinant line bundle $\mathcal{L}$. The Verlinde formula, originating from conformal field theory \cite{Ver}, is the following remarkable expression
\begin{equation} \label{Verlindeorigin}
\dim H^0(M, \mathcal{L}^{\otimes r}) = \Big( \frac{r+2}{2} \Big)^{g-1} \sum_{j=1}^{r+1} \sin\Big( \frac{\pi j}{r+2} \Big)^{2-2g}, \quad \forall r \in \Z_{\geq 0}.
\end{equation}
We will not survey the rich literature on the Verlinde formula (see the introduction to \cite{GKW} for some references). In this section, we study analogs of the Verlinde formula for Gieseker-Maruyama moduli spaces on smooth projective surfaces.

\subsection{Hilbert schemes} \label{sec:verlinde:rk1}

Let $S$ be a smooth projective surface. The analog of the Verlinde formula for $S^{[n]}$ was studied in \cite{EGL}. We first describe the Picard group of $S^{[n]}$.
Any line bundle $L$ on $S$ induces a line bundle $L_n$ on the symmetric product $S^{(n)} := S^{n} / \mathfrak{S}_n$ by $\mathfrak{S}_n$-equivariant push-forward of $L \boxtimes \cdots \boxtimes L$ along the morphism $S^{n} \rightarrow S^{(n)}$. The pull-back of $L_n$ along the Hilbert-Chow morphism $S^{[n]} \rightarrow S^{(n)}$ is denoted by $\mu(L)$. Furthermore, consider
$
E:=\det \O_S^{[n]}.
$
The line bundles $\mu(L)$ and $E$ generate the Picard group of $S^{[n]}$. We consider the Verlinde numbers
$$
\chi(S^{[n]}, \mu(L) \otimes E^{\otimes r}).
$$
\begin{theorem}[Ellingsrud-G\"ottsche-Lehn] \label{EGLthm}
For any $r \in \Z$, there exist $g_r, f_r, A_r, B_r \in \Q[[w]]$ with the following properties. For any smooth projective surface $S$ and $L \in \Pic(S)$, we have
$$
\sum_{n=0}^{\infty} w^n \, \chi(S^{[n]}, \mu(L) \otimes E^{\otimes r}) = g_r^{\chi(L)} f_r^{\frac{1}{2} \chi(\O_S)} A_r^{L K_S} B_r^{K_S^2}.
$$
Moreover
\begin{align*}
g_r(w) = 1+v, \quad f_r(w) = (1+v)^{r^2}(1+ r^2 v)^{-1},
\end{align*}
where 
$w = v(1+v)^{r^2-1}.$
\end{theorem}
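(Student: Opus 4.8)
The plan is first to rewrite $\mu(L)\otimes E^{\otimes r}$ in tautological terms. Using the standard relation $\det L^{[n]}=\mu(L)\otimes E$ together with $E=\det\O_S^{[n]}$, one obtains
$$\mu(L)\otimes E^{\otimes r}=\det\big(L^{[n]}\big)\otimes\big(\det\O_S^{[n]}\big)^{\otimes(r-1)},$$
whose first Chern class is $c_1(L^{[n]})+(r-1)c_1(\O_S^{[n]})$. By Hirzebruch--Riemann--Roch,
$$\chi\big(S^{[n]},\mu(L)\otimes E^{\otimes r}\big)=\int_{S^{[n]}}\exp\!\big(c_1(L^{[n]})+(r-1)c_1(\O_S^{[n]})\big)\,\td\big(T_{S^{[n]}}\big),$$
so the Verlinde number is a tautological integral in the Chern classes of $L^{[n]}$, $\O_S^{[n]}$ and $T_{S^{[n]}}$.

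\textbf{Universality and the product structure.} By \cite[Thm.~4.1]{EGL} this integral equals, for each fixed $n$ and $r$, a universal polynomial in $L^2$, $LK_S$, $K_S^2$ and $\chi(\O_S)$. I would then apply the same multiplicative principle used in the Introduction to derive the series $A,B$ and $W,X,Y,Z$: these four numbers are the Chern numbers of the pair $(S,L)$ and generate the cobordism group of surfaces-with-a-line-bundle over $\Q$, and the assignment $(S,L)\mapsto\sum_{n}w^{n}\chi(S^{[n]},\mu(L)\otimes E^{\otimes r})$ is multiplicative under disjoint union of pairs, since $(S_1\sqcup S_2)^{[n]}=\bigsqcup_{n_1+n_2=n}S_1^{[n_1]}\times S_2^{[n_2]}$ and the relevant bundles split as external products. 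Hence its logarithm is linear in the cobordism class, and the series factors as a product of four universal power series raised to the four Chern numbers. Via the Riemann--Roch identity $\chi(L)=\chi(\O_S)+\tfrac12 L^2-\tfrac12 LK_S$ this reparametrizes into $g_r^{\chi(L)}f_r^{\frac12\chi(\O_S)}A_r^{LK_S}B_r^{K_S^2}$ with $g_r,f_r,A_r,B_r\in\Q[[w]]$.

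\textbf{Isolating $g_r$ and $f_r$.} To determine the first two series I would restrict to surfaces with $K_S=0$, which makes the factors $A_r^{LK_S}$ and $B_r^{K_S^2}$ disappear and leaves $g_r^{\chi(L)}f_r^{\frac12\chi(\O_S)}$. On an abelian surface $\chi(\O_S)=0$, so the series equals $g_r^{\chi(L)}$; evaluating it for one line bundle with $\chi(L)\neq0$ and taking the $\chi(L)$-th root yields $g_r$. On a K3 surface $\chi(\O_S)=2$, and for $L=\O_S$ (so $\chi(L)=2$) the series equals $g_r^{2}f_r$, which pins down $f_r$ once $g_r$ is known. The closed-form evaluations needed here can be obtained from the holomorphic-symplectic geometry of $S^{[n]}$, using the Albanese fibration $A^{[n]}\to A$ in the abelian case, or reduced to sums over monomial ideals through the local model $(\C^2)^{[n]}$ for the direction governing $g_r$.

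\textbf{The explicit form and the main obstacle.} The stated answer is the algebraic relation $w=v(1+v)^{r^2-1}$ with $g_r=1+v$ and $f_r=(1+v)^{r^2}(1+r^2v)^{-1}$, and I expect this Lagrange-inversion shape to arise as the solution of the ``add-a-point'' recursion relating $S^{[n]}$ and $S^{[n+1]}$ through the nested Hilbert scheme: the self-similarity of that recursion produces a functional equation solved by Lagrange inversion with kernel $(1+v)^{r^2-1}$, from which the formula for $f_r$ then follows. The soft part of the argument is the existence of the four-series product; the genuine difficulty is this final step, namely upgrading the abstract universal series to these explicit algebraic functions. That requires both the closed-form computation on the $K_S$-trivial surfaces and the recognition of the resulting coefficients as a Lagrange inversion, and this is where the essential work lies.
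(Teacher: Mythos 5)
Your structural outline is correct and matches the strategy that both \cite{EGL} and this survey use for all statements of this type (compare the proof of Theorem \ref{thm:univ}): the identity $\det(L^{[n]})=\mu(L)\otimes E$ is the standard one, Hirzebruch--Riemann--Roch turns the Verlinde number into a tautological integral, \cite[Thm.~4.1]{EGL} together with multiplicativity under disjoint union produces four universal series in the exponents $L^2$, $LK_S$, $K_S^2$, $\chi(\O_S)$, and passing to the exponents $\chi(L)$, $\tfrac12\chi(\O_S)$, $LK_S$, $K_S^2$ via $\chi(L)=\chi(\O_S)+\tfrac12L(L-K_S)$ is a harmless change of basis. Restricting to numerically trivial canonical class to isolate $g_r$ and $f_r$ is also the right move, and abelian and K3 surfaces realize enough values of $(\chi(L),\chi(\O_S))$ for this to work.

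The genuine gap is the step you yourself flag at the end: the theorem asserts \emph{closed forms} for $g_r$ and $f_r$ for every $r$, and nothing in the proposal actually produces them. The cases $r=0,1$ are accessible by direct arguments ($\mu(L)$ is pulled back from the symmetric product, giving $\binom{\chi(L)+n-1}{n}$, and $\chi(S^{[n]},\det L^{[n]})=\binom{\chi(L)}{n}$), but for general $r$ the evaluation of $\chi(A^{[n]},\mu(L)\otimes E^{\otimes r})$ on an abelian surface and of $\chi(\mathrm{K3}^{[n]},E^{\otimes r})$ is precisely the content of the theorem, and the routes you sketch do not close it. Invoking the Euler characteristic formula for line bundles on $\mathrm{K3}^{[n]}$ from holomorphic symplectic geometry is circular, since that formula is itself a consequence of this theorem (via \cite{EGL}); reduction to ``sums over monomial ideals'' requires a torus action, which K3 and abelian surfaces lack, while on toric surfaces localization only determines the series coefficient by coefficient and so can verify but not prove the closed forms. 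The incidence-variety recursion relating $S^{[n]}$ and $S^{[n+1]}$ is indeed the engine behind \cite[Thm.~4.1]{EGL}, but to use it here you would have to derive the actual functional equation it imposes on the generating series and solve it; asserting that its ``self-similarity'' yields a Lagrange inversion with kernel $(1+v)^{r^2-1}$ is a description of the expected answer rather than a derivation. In short, the proposal proves the existence of $g_r,f_r,A_r,B_r$ but not the displayed formulas for $g_r$ and $f_r$, which is where the substance of the theorem lies.
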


In \cite{EGL}, it is shown that $A_r=B_r = 1$ for $r = 0,\pm 1$. Using Serre duality and the (conjectural) Segre-Verlinde correspondence, discussed in Sections \ref{sec:SDvir} and \ref{sec:SVcorr} respectively, Marian-Oprea-Pandharipande \cite{MOP3} determined explicit formulae for $A_r, B_r$ for $r = \pm 2, \pm 3$.
Their calculations led to the following conjecture.
\begin{conjecture}[Marian-Oprea-Pandharipande] \label{MOPconj1}
$A_r$ and $B_r$ are algebraic functions for all $r$.
\end{conjecture}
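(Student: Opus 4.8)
The plan is to transfer algebraicity from the Segre side to the Verlinde side, uniformly in the level $r$, via the Segre-Verlinde correspondence. First I would isolate what Theorem~\ref{EGLthm} already provides: $g_r=1+v$ and $f_r=(1+v)^{r^2}(1+r^2v)^{-1}$ are rational in $v$, and $v$ is an algebraic function of $w$ through $w=v(1+v)^{r^2-1}$, so $g_r,f_r$ are algebraic; the conjecture thus reduces to the algebraicity of $A_r$ and $B_r$, and the shape of the uniformizer strongly suggests these too should be algebraic in $v$. Next I would exploit Serre duality. Since $S^{[n]}$ is smooth of even dimension with $K_{S^{[n]}}=\mu(K_S)$, and $\mu$ is a homomorphism, one has $\chi(S^{[n]},\mu(L)\otimes E^{\otimes r})=\chi(S^{[n]},\mu(K_S-L)\otimes E^{\otimes(-r)})$. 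Using $\chi(K_S-L)=\chi(L)$ together with $g_r=g_{-r}$ and $f_r=f_{-r}$, and comparing the exponents of the independent quantities $LK_S$ and $K_S^2$ in the universal decomposition, this yields the exact functional equations $A_{-r}=A_r^{-1}$ and $B_{-r}=A_rB_r$. Hence it suffices to prove algebraicity for $r\geq 0$, and these relations serve as stringent consistency checks (they are compatible with $A_r=B_r=1$ for $r=0,\pm1$).

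The central step is the Segre-Verlinde correspondence of Section~\ref{sec:SVcorr}, specialised to Hilbert schemes of points. It identifies the level-$r$ Verlinde generating series $\sum_n w^n\,\chi(S^{[n]},\mu(L)\otimes E^{\otimes r})$ with a Segre generating series $\sum_n z^n \int_{S^{[n]}} s(\alpha^{[n]})$ for a K-theory class $\alpha$ on $S$ whose rank is determined by $r$, under an explicit substitution relating the Verlinde variable $w$ and the Segre variable $z$. On the Segre side, algebraicity is available in rank one: Lehn's formula \eqref{lehnconj}, proved in \cite{MOP1,MOP2}, presents the universal Segre functions as products of algebraic functions of $t$, where $z=t(1-t)(1-2t)^4(1-6t+6t^2)^{-3}$ is itself algebraic. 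Granting the correspondence and the algebraicity of the higher-rank Segre series relevant to each $r$, and checking that the substitution $w\leftrightarrow z$ is algebraic, one would conclude that the whole Verlinde series---and therefore $A_r$ and $B_r$---is algebraic. This is precisely the mechanism by which Marian-Oprea-Pandharipande obtained the closed forms for $r=\pm2,\pm3$ in \cite{MOP3}; the task is to run it for every $r$.

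An alternative, more self-contained route avoids the (conjectural) correspondence altogether. Because Theorem~\ref{EGLthm} rests on the nested-Hilbert-scheme recursion of \cite{EGL}, one can in principle extract a recursion---or an algebraic differential equation---satisfied by the generating function, compute $A_r$ and $B_r$ as power series in $w$ (equivalently in $v$) to high order on toric surfaces, and then prove algebraicity by exhibiting the defining polynomial relation in $v$. The functional equations $A_{-r}=A_r^{-1}$ and $B_{-r}=A_rB_r$, together with the known values for $r=0,\pm1$, would cut down the unknowns and organise the search for these polynomial relations.

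The hard part will be the uniformity in $r$. The Segre-Verlinde correspondence is, in the generality required here, still conjectural, so an unconditional proof must either establish it or bypass it; and even granting it, one must know that the higher-rank Segre universal functions are algebraic for \emph{all} ranks---not merely for the small ranks accessible through Lehn's rank-one formula---and that the $r$-dependent change of variables linking $w$ and $z$ preserves algebraicity. Controlling this uniform $r$-dependence, rather than any single specialisation, is where I expect the essential difficulty to lie.
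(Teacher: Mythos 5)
The statement you are asked to prove is presented in the paper as an open conjecture (attributed to Marian--Oprea--Pandharipande); the paper contains no proof of it, only the evidence that $A_r=B_r=1$ for $r=0,\pm1$ and that explicit algebraic formulae for $r=\pm2,\pm3$ were obtained in \cite{MOP3}. So there is no ``paper proof'' to compare against, and the real question is whether your proposal closes the gap. It does not. Your preliminary observations are sound: $g_r$ and $f_r$ are indeed algebraic from Theorem \ref{EGLthm}, and your Serre-duality functional equations $A_{-r}=A_r^{-1}$, $B_{-r}=A_rB_r$ are correct and equivalent to the relation $A_r=B_{-r}/B_r$ recorded in Section \ref{sec:SDvir}. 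But your central mechanism --- transferring algebraicity through the Segre--Verlinde correspondence --- cannot yield a proof, because the paper states explicitly that this correspondence (itself conjectural, even in rank one for general level) only shows that Conjecture \ref{MOPconj1} and Conjecture \ref{MOPconj2} are \emph{equivalent}. The Segre-side input you would need, namely algebraicity of $Y_s,Z_s$ for the rank $s=1+r$ attached to an arbitrary level $r$, is exactly as open as the statement you are trying to prove: Lehn's formula and the results of \cite{MOP3} cover only $s\in\{-2,-1,1,2\}$, i.e.\ $|r|\le 3$ after applying Serre duality. Your argument therefore moves the conjecture sideways rather than resolving it.

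Your alternative route is also not a proof as stated. The nested-Hilbert-scheme recursion of \cite{EGL} establishes the \emph{existence} of the universal series $A_r,B_r$ and allows their computation to any finite order, but no finite amount of power-series data can certify algebraicity, and you do not indicate how an algebraic (polynomial) relation in $v$ would actually be derived from the recursion for general $r$. To your credit, you identify the correct obstruction in your final paragraph --- uniformity in $r$, equivalently uniformity in the Segre rank $s$ --- and everything you assert that is checkable against the paper is accurate. But the conclusion is that the proposal is a reduction of one open conjecture to two others (the Segre--Verlinde correspondence and Conjecture \ref{MOPconj2}), not a proof.
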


\subsection{Rank 2} \label{sec:verlinde:rk2}

Let $(S,H)$ be a smooth polarized surface satisfying $b_1(S) = 0$ and let $M:=M_S^H(2,c_1,c_2)$. As usual, we assume $M$ does not contain strictly Gieseker $H$-semistable sheaves. 
Suppose a universal sheaf $\EE$ on $S \times M$ exists. Using the slant product 
$$
/ :  H^p(S \times M,\Q) \times H_q(S,\Q) \rightarrow H^{p-q}(M,\Q)
$$ 
and Poincar\'e duality on $S$, we define the $\mu$-insertion 
\begin{equation} \label{muinsert}
\mu(\alpha) :=   \big(c_2(\EE) - \frac{1}{4} c_1(\EE)^2 \big) / \mathrm{PD}(\alpha) \in H^*(M,\Q),
\end{equation}
for any $\alpha \in H^*(S,\Q)$. 

\begin{remark}
Although in general $\E$ only exists \'etale locally on $S \times M$, we can write
$$
c_2(\EE) - \frac{1}{4} c_1(\EE)^2 = -\frac{1}{4} \ch_2(\EE \otimes \EE \otimes \det(\EE)^*),
$$
where $\EE \otimes \EE \otimes \det(\EE)^*$ always exists globally on $S \times M$ (essentially because it is invariant under replacing $\EE$ by $\EE \otimes \mathcal{L}$ for any line bundle $\mathcal{L}$, so it glues from \'etale local patches). Hence $\mu(\alpha)$ is defined without assuming the existence of a universal sheaf $\E$ on $S \times M$.
\end{remark}

Let $L \in \Pic(S)$ be such that $c_1(L)c_1$ is even. Then there exists a line bundle $\mu(L)$ on $M$ such that its first Chern class is \eqref{muinsert} with $\alpha=c_1(L)$ \cite[Ch.~8]{HL}. The line bundle $\mu(L)$ is called a Donaldson line bundle. We first turn our attention to\footnote{When the Donaldson line bundle does not exist, we define $\chi(M,\mu(L) \otimes \O_M^{\vir})$ by the virtual Hirzebruch-Riemann-Roch formula: $\int_{[M]^{\vir}} e^{\mu(c_1(L))} \, \td(T_M^{\vir})$. Similarly for $\chi_{y}^{\vir}(M,\mu(L))$ below.} 
$$
\chi(M,\mu(L) \otimes \O^{\vir}_M ),
$$
which can be seen as a virtual Verlinde number and is also known as a $K$-theoretic Donaldson invariant \cite{GNY2}. The wall crossing behaviour of $K$-theoretic Donaldson invariants for toric surfaces was determined in \cite{GNY2}. The $K$-theoretic Donaldson invariants of rational surfaces, and their relationship to strange duality, were studied by the first-named author and Y.~Yuan  \cite{GY, Got5}. 

We are interested in the case $(S,H)$ is a smooth polarized surface satisfying $b_1(S) = 0$ and $p_g(S)>0$. Let $L\in \Pic(S)$ and suppose $M:=M_S^H(2,c_1,c_2)$ does not contain strictly Gieseker $H$-semistable sheaves. In \cite[Conj.~1.1]{GKW}, together with R.A.~Williams, we conjectured that $\chi(M,\mu(L) \otimes \O^{\vir}_M )$ is given by the coefficient of $x^{\vd(M)}$ of  
\begin{equation} \label{conj:Verlindevir:rk2}
\frac{2^{2-\chi(\O_S)+K_S^2}}{(1-x^2)^{\frac{(L-K_S)^2}{2}+\chi(\O_S)}} \sum_{a \in H^2(S,\Z)} \SW(a) \, (-1)^{ac_1} \, \left(\frac{1+x}{1-x}\right)^{\left(\frac{K_S}{2}-a\right)(L-K_S)}.
\end{equation}
There are several directions into which \eqref{conj:Verlindevir:rk2} can be generalized. In Section \ref{sec:verlinde:rkgeneral}, we present a generalization to more general line bundles on $M$ and higher rank Gieseker-Maruyama moduli spaces. Another interesting generalization concerns ``virtual $\chi_y$-genus valued in a Donaldson line bundle''
$$
\chi_{y}^{\vir}(M,\mu(L)) :=  \sum_{p} y^p \, \chi(M, \mu(L) \otimes \Lambda^p \Omega^{\vir}_M \otimes \O_M^{\vir})
$$
and its normalized version $\overline{\chi}_{-y}^{\vir}(M,\mu(L)) := y^{-\frac{\vd(M)}{2}}\chi_{-y}^{\vir}(M,\mu(L))$. Together with Williams, we conjectured the following formula.
\begin{conjecture} \cite[Conj.~1.2]{GKW} \label{conj:Verlindechiyvir:rk2}
Let $(S,H)$ be a smooth polarized surface satisfying $b_1(S) = 0$, $p_g(S)>0$, and let $L \in \Pic(S)$. Suppose $M:=M_S^H(2,c_1,c_2)$ contains no strictly Gieseker $H$-semistable sheaves. Then $\overline{\chi}_{-y}^{\vir}(M,\mu(L))$ equals the coefficient of $x^{\vd(M)}$ of 
{\scriptsize{
\begin{align*}
&4\left( \frac{1}{2} \prod_{n=1}^{\infty} \frac{1}{(1-x^{2n})^{10}(1-x^{2n}y)(1-x^{2n}y^{-1})}\right)^{\chi(\O_S)}\left(\frac{ 2 \overline \eta(x^4)^2}{\theta_3(x,y^{\frac{1}{2}})}\right)^{K_S^2} \left(\prod_{n=1}^{\infty} \left(\frac{(1-x^{2n})^2}{(1-x^{2n}y)(1-x^{2n}y^{-1})}\right)^{n^2}\right)^{\frac{L^2}{2}} \times \\
&\left(\prod_{n=1}^{\infty}\left( \frac{1-x^{2n}y^{-1}}{1-x^{2n}y}\right)^n\right)^{LK_S} \!\!\!\!\! \sum_{a \in H^2(S,\Z)} (-1)^{ac_1} \, \SW(a) \,   \left(\frac{\theta_3(x,y^{\frac{1}{2}})}{\theta_3(-x,y^{\frac{1}{2}})}\right)^{aK_S} \Bigg(\prod_{n=1}^{\infty}\left(\frac{(1-x^{2n-1} y^{\frac{1}{2}})(1+x^{2n-1}y^{-\frac{1}{2}})}{(1-x^{2n-1}y^{-\frac{1}{2}})(1+x^{2n-1} y^{\frac{1}{2}})}\right)^{2n-1}\Bigg)^{\frac{L(K_S-2a)}{2}}.
\end{align*}
}}
\end{conjecture}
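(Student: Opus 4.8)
The plan is to follow the route that underlies every verification in this survey: reduce $\overline{\chi}_{-y}^{\vir}(M,\mu(L))$ to Mochizuki's formula and then to a universal computation on products of Hilbert schemes of points. First I would apply the virtual Hirzebruch--Riemann--Roch theorem of \cite{CFK, FG} to write
$$
\chi(M,\mu(L)\otimes\Lambda^p\Omega_M^{\vir}\otimes\O_M^{\vir})=\int_{[M]^{\vir}} e^{\mu(c_1(L))}\,\ch(\Lambda^p\Omega_M^{\vir})\,\td(T_M^{\vir}),
$$
so that, after summing over $p$ with weight $(-y)^p$ and normalizing, $\overline{\chi}_{-y}^{\vir}(M,\mu(L))$ becomes the integral against $[M]^{\vir}$ of a fixed multiplicative characteristic class of $T_M^{\vir}$ twisted by the insertion $e^{\mu(c_1(L))}$. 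This is exactly the type of descendent Donaldson invariant to which Mochizuki's formula applies.

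Second, I would feed this into Mochizuki's formula as developed in Section~\ref{sec:struc}. For rank $2$ it expresses the integral as a sum over Seiberg--Witten basic classes $a$, each term being a residue in an equivariant variable of a tautological integral over $S^{[n_1]}\times S^{[n_2]}$, whose integrand is built from $T_S$, the two universal ideal sheaves, the twisting classes $a$ and $c_1-a$, and the $\mu(L)$-insertion. The factors $\SW(a)$ and $(-1)^{a c_1}$ of the conjecture are already visible at this stage. Third, I would establish universality: combining the shape of Mochizuki's formula with the EGL-type universality \cite[Thm.~4.1]{EGL} for tautological integrals over products of Hilbert schemes forces the generating series in $x$ to be a product of universal power series in $x,y$ raised to the characteristic numbers $\chi(\O_S)$, $K_S^2$, $L^2$, $LK_S$, and, inside the Seiberg--Witten sum, $a K_S$ and $L(K_S-2a)$. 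This explains the precise exponential shape of the conjectured formula and is what Theorem~\ref{thm:univ} encodes; what remains is to identify the individual series with the stated theta-function products.

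Fourth, I would pin down those series. Two specializations fix much of them for free: taking $L$ trivial kills every $L$-dependent product and must reproduce Conjecture~\ref{conj:chiyvir:rk2}, which identifies the $\chi(\O_S)$, $K_S^2$ and $a K_S$ factors, while an appropriate specialization in $y$ must reproduce the virtual Verlinde formula~\eqref{conj:Verlindevir:rk2}. On a K3 surface, deformation invariance reduces the left-hand side to the $\chi_y$-genus of a Hilbert scheme of points twisted by the class corresponding to $\mu(L)$, computable by the methods of Göttsche--Soergel \cite{GS}; since $K_S=0$ there, this determines the $\chi(\O_S)$ and $L^2$ series. The genuinely new factors---the $L^2$, $LK_S$ and $L(K_S-2a)$ products---I would then compute by torus localization on the toric surfaces of Section~\ref{sec:toric}, matching their $x$-expansions against the conjectured infinite products.

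Finally, the main obstacle. Torus localization only yields these universal series to finite order in $x$, so it gives verification rather than proof; the hard part is to show that the new $L$-dependent series equal the stated closed forms to all orders. This requires either a modularity or Jacobi-form argument upgrading finitely many matching coefficients to an identity, or an independent closed-form evaluation of the relevant generating functions over $S^{[n_1]}\times S^{[n_2]}$. It is precisely the presence of the Donaldson line bundle $\mu(L)$, absent from the earlier $\chi_y$-genus conjecture, that makes controlling these series---and in particular proving the $L(K_S-2a)/2$ product inside the Seiberg--Witten sum---the crux of the argument.
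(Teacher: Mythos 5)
This statement is a conjecture which the paper does not prove; the evidence it offers is exactly the pipeline you describe (virtual Hirzebruch--Riemann--Roch to reduce to descendent Donaldson invariants, Mochizuki's formula, an EGL-type universality argument as in Theorem \ref{thm:univ}, toric localization to fixed order in $x$, plus the K3 case via deformation equivalence and the computation of $\chi_y^{\vir}(S^{[n]},\mu(L))$ in \cite{Got6}). Your closing paragraph correctly identifies the actual state of affairs: the universal series are only matched against the closed theta-function forms to finite order, so the statement remains open, and your proposal coincides with the paper's own verification strategy rather than differing from it.
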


Similar to the discussion in Sections \ref{sec:eulervir} and \ref{sec:refine}, for K3 surfaces one can reduce Conjecture \ref{conj:Verlindechiyvir:rk2} to the calculation of $\chi_{y}^{\vir}(S^{[n]},\mu(L))$ \cite{Got6}. Moreover, the first-named author derived a formula for these numbers \cite{Got6} (and in fact, more generally, for elliptic genera of Hilbert schemes of points with values in a Donaldson line bundle). This establishes the case of K3 surfaces. Furthermore, consider the following list of surfaces: \\

\noindent K3 surfaces blown-up in at most two points, elliptic surfaces of type $E(3), E(4), E(5)$, blow-ups of an elliptic surface of type $E(3)$ in one point, double covers of $\PP^2$ branched along a smooth octic, blow-ups of the previous surfaces in one point, smooth quintics in $\PP^3$, blow-ups of the previous surfaces in one point. \\

As before, in each case we verified Conjecture \ref{conj:Verlindechiyvir:rk2} for certain values of $c_1$ (sometimes with conditions on $H$) and up to a certain virtual dimension as detailed in \cite[Sect.~2.5]{GKW}. Our method for these verifications is described in Section \ref{sec:struc}.

\subsection{Application: Verlinde formula for Higgs pairs}

The original Verlinde formula \eqref{Verlindeorigin} was recently upgraded to the moduli space of semistable Higgs pairs on a smooth projective curve by D.~Halpern-Leistner \cite{H-L} and J.E.~Andersen, S.~Gukov, and Du Pei \cite{AGP}. We now discuss an extension of Conjecture \ref{conj:Verlindechiyvir:rk2} to Higgs pairs.  

Let $(S,H)$ be a smooth polarized surface satisfying $H_1(S,\Z) = 0$, $p_g(S)>0$, and let $L \in \Pic(S)$. Consider the moduli space of rank 2 Higgs pairs $N^\perp := N_S^H(2,c_1,c_2)$ introduced in Section \ref{sec:VW}. As before, we assume $N^\perp$ does not contain strictly Gieseker $H$-semistable Higgs pairs. In \cite{GKW}, we studied the Verlinde numbers
\begin{equation} \label{VerlindeHiggs}
\chi(N^\perp, \mu(L) \otimes \widehat{\O}^{\vir}_{N^\perp}),
\end{equation}
which we define by a $K$-theoretic virtual formula similar to Section \ref{sec:KVW} (see \cite[Sect.~1.2]{GKW} for details). The instanton contribution to this invariant is 
$$
(-1)^{\vd(M)} \overline{\chi}_{-y}^{\vir}(M,\mu(L)),
$$
where $M := M_S^H(2,c_1,c_2)$ is the Gieseker-Maruyama moduli space and $y$ relates to the $\C^*$-equivariant parameter $t = c_1^{\C^*}(\mathfrak{t})$ via $y = e^t$. This contribution is determined by Conjecture \ref{conj:Verlindechiyvir:rk2}. In \cite[Conj.~1.3]{GKW}, we give a conjectural formula for the monopole contribution to \eqref{VerlindeHiggs} in a very similar shape to the formula of Conjecture \ref{conj:Verlindechiyvir:rk2}. We prove this monopole formula for K3 surfaces. More generally, we prove an analog of Theorem \ref{thm:Laa} for the monopole contribution to \eqref{VerlindeHiggs}. As in Laarakker's calculations, this allows us to show that the first 15 terms of our prediction for the monopole contribution is correct.

Our conjectural formula for $\chi(N^\perp, \mu(L) \otimes \widehat{\O}^{\vir}_{N^\perp})$ interpolates between $K$-theoretic Donaldson invariants and $K$-theoretic Vafa-Witten invariants:
\begin{itemize}
\item \textbf{$K$-theoretic Donaldson invariants.} Replacing $x$ by $x y^{\frac{1}{2}}$ in the formula of Conjecture \ref{conj:Verlindechiyvir:rk2} gives an expression for $\chi_{-y}^{\vir}(M,\mu(L))$. Setting $y=0$ yields the formula for rank 2 $K$-theoretic Donaldson invariants \eqref{conj:Verlindevir:rk2}.
\item \textbf{$K$-theoretic Vafa-Witten invariants.} Taking $L=\O_S$ in Conjecture \ref{conj:Verlindechiyvir:rk2} and its monopole analog \cite[Conj.~1.3]{GKW}, we obtain the conjectural formula for the rank 2 $K$-theoretic Vafa-Witten invariants of Conjecture \ref{conj:chiyvir:rk2} and Section  \ref{sec:KVW}. 
\end{itemize}
In \cite[App.~B]{GK1}, the first-named author and Nakajima conjectured a formula interpolating between the Donaldson invariants and virtual Euler characteristics of $M := M_S^H(2,c_1,c_2)$. Conjecture \ref{conj:Verlindechiyvir:rk2} implies this formula (\cite[Prop.~4.8]{GKW}).

\subsection{Arbitrary rank} \label{sec:verlinde:rkgeneral}

We want to generalize \eqref{conj:Verlindevir:rk2} to more general line bundles and higher rank Gieseker-Maruyama moduli spaces. Let $(S,H)$ be a smooth polarized surface satisfying $b_1(S)=0$ and consider $M:=M_S^H(\rho,c_1,c_2)$  for any $\rho>0$. Assume $M$ does not contain strictly Gieseker $H$-semistable sheaves. We describe the analogs of the line bundles $\mu(L) \otimes E^{\otimes r}$ on  $S^{[n]}$ (cf.~\cite[Ch.~8]{HL}). 
We first suppose there exists a universal sheaf $\E$ on $S \times M$, but we point out in Remark \ref{univdrop} below how to get rid of this assumption. Consider 
\begin{equation} \label{lambdaE}
\lambda_{\EE} : K^0(S) \rightarrow \Pic(M), \quad \alpha \mapsto \det \big( \pi_{M!} \big( \pi_S^* \alpha \cdot [\E] \big) \big)^{-1},
\end{equation}
where $\pi_{M!} = \sum_i (-1)^i R^i \pi_{M*}$.
We fix a class $c \in K(S)_{\mathrm{num}}$ in the numerical Grothendieck group of $S$ satisfying $\rk(c) = \rho$, $c_1(c) = c_1$, $c_2(c) = c_2$. Restricting $\lambda_{\EE}$ to
\begin{equation} \label{Kc}
K_c := \{ v \in K^0(S) \, : \, \chi(S,c \otimes v) = 0\},
\end{equation}
the map $\lambda_{\EE} =: \lambda$ becomes independent of the choice of universal sheaf $\EE$ \cite[Ch.~8]{HL}.

Let $r \in \Z$, $L \in \Pic(S) \otimes \Q$ such that $\mathcal{L}:=L \otimes \det(c)^{-\frac{r}{\rho}} \in \Pic(S)$ and $\rho$ divides 
$\mathcal{L}c_1 + r \big( \frac{1}{2} c_1(c_1-K_S) - c_2 \big)$. 
Take a class $v \in K^0(S)$ satisfying:
\begin{itemize}
\item $\rk(v) = r$ and $c_1(v) = \mathcal{L}$, 
\item $c_2(v)=\frac{1}{2} \mathcal{L}( \mathcal{L}-K_S) +r\chi(\O_S) +\frac{1}{\rho}\mathcal{L}c_1 +\frac{r}{\rho}\big(\frac{1}{2}c_1(c_1-K_S)-c_2\big)$.
\end{itemize}
The second condition is equivalent to $v \in K_c \subset K^0(S)$. We define
\begin{equation} \label{muLEr}
\mu(L) \otimes E^{\otimes r} := \lambda(v).
\end{equation}

\begin{remark}
For $\rho = 1$ and $c_1=0$, \eqref{muLEr} coincides with the definition of $\mu(L) \otimes E^{\otimes r}$ on $M_S^H(1,0,n) \cong S^{[n]}$ introduced in Section \ref{sec:verlinde:rk1} (by \cite[Rem.~5.3(2)]{Got6}). For $r=0$, \eqref{muLEr} coincides with the definition of $\mu(L)$ in Section \ref{sec:verlinde:rk2} (by \cite[Rem.~5.3(1)]{Got6}).
\end{remark}

\begin{remark} \label{univdrop}
Without assuming the existence of a universal sheaf $\E$ on $S \times M$, there still exists a homomorphism
$
\lambda : K_c \longrightarrow \Pic(M) 
$
such that for any morphism $\phi : B \rightarrow M$ and any $B$-flat family of coherent sheaves $\F$ on $S \times B$, we have $\phi^* \lambda(v) = \lambda_{\F}(v)$ for all $v \in K^0(S)$. Here $K_c$, $\lambda_{\F}$ are defined in \eqref{Kc}, \eqref{lambdaE} (with $M$ replaced by $B$ and $\E$ by $\F$). See \cite[Ch.~8]{HL}, \cite[Sect.~1.1]{GNY2}. Given this $\lambda$, one then defines $\mu(L) \otimes E^{\otimes r}$ by \eqref{muLEr}.
\end{remark}

\begin{conjecture} \cite{GK4} \label{conj:Verlindevir}
Let $\rho > 0$ and $r \in \Z$. There exist  $G_r$, $F_{r} \in \C[[w]]$, $A_r$, $B_r$, $A_{i,r}$, $B_{ij,r} \in \C[[w^{\frac{1}{2}}]]$, for all $1 \leq i \leq j \leq \rho-1$, with the following property.\footnote{These universal functions depend on $\rho$ and $r$. We suppress the dependence on $\rho$.} Let $(S,H)$ be a smooth polarized surface satisfying $b_1(S) = 0$,  $p_g(S)>0$, and let $L \in \Pic(S)$. Suppose $M:=M_S^H(\rho,c_1,c_2)$ contains no strictly Gieseker $H$-semistable sheaves. Then $\chi(M, \mu(L) \otimes E^{\otimes r} \otimes \O_M^{\vir})$
equals the coefficient of $w^{\frac{1}{2}\vd(M)}$ of
\begin{align*} 
\rho^{2 - \chi(\O_S)+K_S^2} \, G_{r}^{\chi(L)} F_{r}^{\frac{1}{2} \chi(\O_S)} A_{r}^{L K_S} B_{r}^{K_S^2} \sum_{(a_1, \ldots, a_{\rho-1})} \prod_{i=1}^{\rho-1} \epsilon_{\rho}^{i a_i c_1} \, \SW(a_i) \, A_{i,r}^{a_i L} \prod_{1 \leq i \leq j \leq \rho-1} B_{ij,r}^{a_i a_j},
\end{align*}
where the sum is over all $(a_1, \ldots, a_{\rho-1}) \in H^2(S,\Z)^{\rho -1}$ and $\epsilon_\rho :=e^{2 \pi \sqrt{-1} / \rho}$. Furthermore, $A_r$, $B_r$, $A_{i,r}$, $B_{ij,r}$ are algebraic functions for all $r,i,j$.
\end{conjecture}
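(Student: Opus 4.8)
The plan is to reduce the virtual Verlinde number to universal intersection numbers on products of Hilbert schemes via virtual Hirzebruch--Riemann--Roch and Mochizuki's formula, extract the multiplicative structure from Ellingsrud--G\"ottsche--Lehn universality, and only then confront the algebraicity, which is the genuinely hard open part. First I would apply virtual HRR \cite{CFK, FG} to write
\[
\chi(M, \mu(L) \otimes E^{\otimes r} \otimes \O_M^{\vir}) = \int_{[M]^{\vir}} \ch(\lambda(v)) \, \td(T_M^{\vir}),
\]
so that the invariant becomes a virtual descendent integral in the Chern classes of the virtual tangent bundle $T_M^{\vir}$ from \eqref{Tvir} and in $c_1(\lambda(v))$, the latter expressed through the universal sheaf $\E$ via \eqref{lambdaE}. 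Both $\td(T_M^{\vir})$ and $\ch(\lambda(v))$ are polynomial in the Künneth components of $\pi_*\ch(\E)$, so this is exactly the type of integral to which Mochizuki's formula (Section \ref{sec:struc}, Theorem \ref{thm:univ}) applies.

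Next I would feed this integral into Mochizuki's formula \cite{Moc}. For rank $\rho$ it expresses the integral over $M_S^H(\rho,c_1,c_2)$ as an iterated wall-crossing/residue sum over splittings of the Chern character into rank-one pieces; the rank-one contributions are intersection numbers over products $S^{[n_1]} \times \cdots \times S^{[n_\rho]}$ of Hilbert schemes of points, and each off-diagonal splitting class contributes a factor $\SW(a_i)$ through Mochizuki's Seiberg--Witten weight. This is precisely the mechanism producing the sum over $(a_1,\ldots,a_{\rho-1}) \in H^2(S,\Z)^{\rho-1}$ and the product $\prod_i \SW(a_i)$; the roots of unity $\epsilon_\rho^{i a_i c_1}$ arise from the $\rho$ possible normalizations of the universal sheaf on each rank-one factor, which twist $\lambda(v)$ by $\rho$-torsion.

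Then I would invoke Ellingsrud--G\"ottsche--Lehn universality \cite[Thm.~4.1]{EGL}: every integral over a product of Hilbert schemes arising above is a universal polynomial in the pairings $L^2$, $LK_S$, $K_S^2$, $\chi(\O_S)$, $a_iL$, $a_iK_S$, $a_ia_j$, $a_ic_1$. Because $\ch$ and $\td$ are multiplicative and all classes split as external sums across the product, the corresponding generating series in $w$ factorizes as an exponential in these pairings. Grouping the $S$-dependence, using $\chi(L) = \frac{1}{2}L(L-K_S) + \chi(\O_S)$, yields the prefactor $\rho^{2-\chi(\O_S)+K_S^2} G_r^{\chi(L)} F_r^{\frac{1}{2}\chi(\O_S)} A_r^{LK_S} B_r^{K_S^2}$, while the $a_i$-dependence collects into $A_{i,r}^{a_iL} \prod_{i \le j} B_{ij,r}^{a_ia_j}$; the normalization by $w^{\vd(M)/2}$ places the universal series in $\C[[w]]$ or $\C[[w^{\frac{1}{2}}]]$ as claimed. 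This establishes the existence and universality part, and specializes to Theorem \ref{EGLthm} when $\rho = 1$.

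The main obstacle is the algebraicity of $A_r, B_r, A_{i,r}, B_{ij,r}$, which is open already for $\rho = 1$, where it is precisely the Marian--Oprea--Pandharipande Conjecture \ref{MOPconj1}. Here the plan is to combine virtual Serre duality (Section \ref{sec:SDvir}) with the virtual Segre--Verlinde correspondence (Section \ref{sec:SVcorr}): the Segre--Verlinde change of variables should carry these series to the corresponding virtual Segre series, and I would try to show that the residue recursion in Mochizuki's formula forces the latter to satisfy a polynomial relation in a suitable uniformizing variable---exactly as in the explicit rank 2 and rank 3 cases, where the analogous series $Z_{\pm}$ and $W_{\pm}$ are the roots of explicit quadratics (cf.~Conjecture \ref{conj:evir:rk3}). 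Concretely, one would need to prove that the formal functional equation produced by the residue structure is algebraic rather than merely a power-series identity, and that the Segre--Verlinde transform preserves algebraicity. Closing this recursion into a genuine polynomial relation, beyond the finitely many coefficients accessible to the toric computations of Section \ref{sec:struc}, is the crux and is exactly what remains unproven.
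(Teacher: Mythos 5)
The statement you are addressing is a conjecture; the paper offers no proof of it, only a verification strategy. What the paper actually proves (Theorem \ref{thm:univ} and its analogues, via exactly the chain you describe: virtual HRR, Mochizuki's formula, and EGL universality) is a structurally weaker statement: the invariant equals an iterated residue $\Res_{t_1}\cdots\Res_{t_{\rho-1}}$ of a sum over tuples $(a_1,\ldots,a_\rho)$ constrained by Hilbert-polynomial inequalities, with universal series in the auxiliary equivariant variables $t_i$ raised to powers $a_ia_j$, $a_iK_S$, $a_iL$, $L^2$, $LK_S$, $K_S^2$, $\chi(\O_S)$. Your proposal asserts that this ``establishes the existence and universality part'' of Conjecture \ref{conj:Verlindevir}, but it does not: the conjecture's specific shape --- the disappearance of the residues and of the $t_i$, the unconstrained sum over $(a_1,\ldots,a_{\rho-1})$ (which already presupposes the conjectural strengthening of Mochizuki's formula in Remark \ref{strongmoc}), the collapse of the separate dependence on $L^2$, $LK_S$, $\chi(\O_S)$ into the exponents $\chi(L)$ and $\tfrac12\chi(\O_S)$, the explicit closed forms \eqref{fg} for $G_r$ and $F_r$, and the prefactor $\rho^{2-\chi(\O_S)+K_S^2}$ --- is precisely the conjectural content beyond algebraicity. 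The paper uses its universality theorem only to compute finitely many coefficients by toric localization and check them against the conjectured answer; it never derives the displayed formula.

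Two smaller points. The claim that the roots of unity $\epsilon_\rho^{ia_ic_1}$ arise from ``$\rho$ possible normalizations of the universal sheaf'' twisting $\lambda(v)$ by torsion is not supported by anything in the paper; in Mochizuki's formula the $a_i$ enter through the decomposition $c_1=a_1+\cdots+a_\rho$ and the characters $\frakT_i$, and the clean $\epsilon_\rho^{ia_ic_1}$ weighting is itself part of the conjectured simplification. You are right that algebraicity is genuinely open (it contains Conjecture \ref{MOPconj1} as the $\rho=1$ case) and that neither virtual Serre duality nor the Segre--Verlinde correspondence, both themselves conjectural here, provides a proof mechanism; but your framing understates that the entire displayed formula, not just the algebraicity clause, is conjectural.
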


When $S$ is a K3 surface, it is shown in \cite[Prop.~1.10]{GNY2} that deformation equivalence together with a result of A.~Fujiki can be used to express the Verlinde numbers of $M$ in terms of those of $S^{[\frac{1}{2}\vd(M)]}$. The latter are determined by Theorem \ref{EGLthm}. Hence Conjecture \ref{conj:Verlindevir} is true for K3 surfaces and 
\begin{align}
\begin{split} \label{fg}
G_r(w) &= g_{r/\rho}(w) = 1+v, \\
F_r(w) &= f_{r/\rho}(w) = (1+v)^{\frac{r^2}{\rho^2}}\Big(1+\frac{r^2}{\rho^2} v\Big)^{-1},
\end{split}
\end{align}
where $w = v(1+v)^{\frac{r^2}{\rho^2}-1}.$  

For $\rho=2$, $3$, $4$, and several values of $r$, we have explicit (conjectural) algebraic expressions for $A_r$, $B_r$, $A_{i,r}$, $B_{ij,r}$ \cite{GK4}. We present some examples of these in Section \ref{sec:alg}. Similar to previous sections, we verified Conjecture \ref{conj:Verlindevir} for $\rho=2$, $3$, $4$, and various values of $r$ for a certain list surfaces and up to certain virtual dimensions (using the strategy outlined in Section \ref{sec:struc}). The precise list of verifications can be found in \cite{GK4}.

\subsection{Virtual Serre duality} \label{sec:SDvir}

Applying virtual Serre duality \cite[Prop.~3.13]{FG}, to the Verlinde numbers of Sections \ref{sec:verlinde:rk1} and \ref{sec:verlinde:rkgeneral} gives
\begin{align*}
\chi(M, \mu(L) \otimes E^{\otimes r} \otimes \O_M^{\vir}) &= (-1)^{\vd(M)} \chi(M, \mu(-L) \otimes E^{\otimes -r} \otimes K_{M}^{\vir} \otimes \O_M^{\vir}) \\
&= (-1)^{\vd(M)} \chi(M, \mu(-L+\rho K_S) \otimes E^{\otimes -r} \otimes \O_M^{\vir}),
\end{align*}
where $K_M^{\vir}:= \Lambda^{\vd(M)} \Omega_M^{\vir}$ and we use $c_1(T_M^{\vir}) = - \rho \, \mu(K_S)$ \cite[Prop.~8.3.1]{HL}. This puts constraints on the universal functions of Theorem \ref{EGLthm} and Conjecture \ref{conj:Verlindevir}. We already know that 
\begin{align*}
f_{-r/\rho} = f_{r/\rho}, \quad g_{-r/\rho} = g_{r/\rho},
\end{align*}
for all $r \in \Z$. In addition, for rank $\rho=1$, we have (see also \cite{EGL})
$$
A_r = \frac{B_{-r}}{B_r},
$$
for all $r \in \Z$. For any $\rho>0$, virtual Serre duality suggests the following relations.
\begin{conjecture} \cite{GK4} \label{conj:SDvir}
For any $\rho > 0$, we have
\begin{align*}
B_{-r}(w^{\frac{1}{2}})&=g_{r/\rho}(w)^{\binom{\rho}{2}}A_{r}(-w^{\frac{1}{2}})^\rho B_{r}(-w^{\frac{1}{2}}), \\
B_{ii,-r}(w^{\frac{1}{2}})&=A_{i,r}(-w^{\frac{1}{2}})^\rho B_{ii,r}(-w^{\frac{1}{2}}),\\
B_{ij,-r}(w^{\frac{1}{2}})&= B_{ij,r}(-w^{\frac{1}{2}}),
\end{align*}
for all $i=1, \ldots, \rho-1$ and $1 \leq i<j \leq \rho-1$.
\end{conjecture}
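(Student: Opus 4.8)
The plan is to deduce all three identities directly from the virtual Serre duality identity of Section~\ref{sec:SDvir}, granting the universal structure of Conjecture~\ref{conj:Verlindevir}. Write $u:=w^{\frac12}$ and let $Z_{L,r}(u)$ denote the universal generating series appearing in Conjecture~\ref{conj:Verlindevir}, so that $\chi(M,\mu(L)\otimes E^{\otimes r}\otimes\O_M^{\vir})=[u^{\vd(M)}]\,Z_{L,r}(u)$. The sign $(-1)^{\vd(M)}$ in virtual Serre duality is exactly the substitution $u\mapsto-u$ at the level of coefficient extraction, since $(-1)^{\vd}[u^{\vd}]f(u)=[u^{\vd}]f(-u)$. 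Letting $c_2$ (hence $\vd$) vary and running over a sufficiently rich supply of surfaces, I would upgrade the numerical identity to the identity of universal series
$$Z_{L,r}(u)=Z_{-L+\rho K_S,\,-r}(-u).$$

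First I would expand the right-hand side. Riemann--Roch on $S$ gives $\chi(-L+\rho K_S)=\chi(L)-(\rho-1)LK_S+\binom{\rho}{2}K_S^2$ and $(-L+\rho K_S)K_S=-LK_S+\rho K_S^2$, while the lattice insertion transforms as $a_i(-L+\rho K_S)=-a_iL+\rho\,a_iK_S$. The crucial input is the Seiberg--Witten constraint $a(a-K_S)=0$: whenever $\SW(a_i)\neq0$ one has $a_iK_S=a_i^2$, so the term $\rho\,a_iK_S$ becomes $\rho\,a_i^2$ and merges with the diagonal pairing governed by $B_{ii,r}$. Using the already-established symmetries $g_{-r/\rho}=g_{r/\rho}$ and $f_{-r/\rho}=f_{r/\rho}$, which make $G_r,F_r$ invariant under $u\mapsto-u$, the factors $\rho^{2-\chi(\O_S)+K_S^2}$, $F_r^{\chi(\O_S)/2}$ and $G_r^{\chi(L)}$ cancel from both sides.

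Next I would match the surviving factors according to the independent geometric exponents. Since the phases $\epsilon_\rho^{i a_i c_1}$ and the weights $\SW(a_i)$ are identical on both sides, the two lattice sums can be compared term-by-term in $(a_1,\dots,a_{\rho-1})$, and within a term through the exponents $a_iL$, $a_i^2$ and $a_ia_j$ ($i<j$). Matching the coefficient of $a_ia_j$ gives $B_{ij,r}(u)=B_{ij,-r}(-u)$, i.e.\ the third relation; matching $a_i^2$ gives $B_{ii,r}(u)=A_{i,-r}(-u)^{\rho}B_{ii,-r}(-u)$, which upon $r\mapsto-r$ is the second relation. For the prefactors, the extra $G_r^{\binom{\rho}{2}K_S^2}$ produced by $\chi(-L+\rho K_S)$ together with the $A_{-r}(-u)^{\rho K_S^2}$ coming from $(-L+\rho K_S)K_S$ yields, on matching $K_S^2$, the relation $B_r(u)=g_{r/\rho}(u^2)^{\binom{\rho}{2}}A_{-r}(-u)^{\rho}B_{-r}(-u)$; applying $r\mapsto-r$ and $g_{-r/\rho}=g_{r/\rho}$ produces the first relation. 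The coefficients of $LK_S$ and $a_iL$ give the further consequences $A_r(u)A_{-r}(-u)=g_{r/\rho}(u^2)^{-(\rho-1)}$ and $A_{i,r}(u)A_{i,-r}(-u)=1$, which are consistent with but stronger than the stated list; for $\rho=1$ the scheme collapses (the lattice sum is empty and $\binom{1}{2}=0$) to the known relation $A_r=B_{-r}/B_r$, providing a consistency check.

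The main obstacle is the passage from the numerical Serre duality identities, one per moduli space, to the single identity of universal series, together with the legitimacy of the exponent-by-exponent matching. Separating $\chi(\O_S)$, $K_S^2$, $LK_S$ and $L^2$ is routine, since surfaces realise many values of these quantities, but separating the contributions of individual Seiberg--Witten classes and of their pairings $a_iL$, $a_i^2$, $a_ia_j$ requires surfaces whose basic-class configurations are rich enough to make these pairings vary independently. For minimal surfaces of general type only the classes $0$ and $K_S$ occur, so one must bring in elliptic surfaces, blow-ups and the other families used in the verifications in order to access enough configurations; equivalently, one must argue that the universal functions are uniquely pinned down by the family of test surfaces, so that an identity holding for all of them forces the factor-by-factor equalities above.
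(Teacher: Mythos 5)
This statement is a conjecture in the paper, and the paper offers no proof --- only the same motivating computation you perform: apply virtual Serre duality $\chi(M,\mu(L)\otimes E^{\otimes r}\otimes\O_M^{\vir})=(-1)^{\vd(M)}\chi(M,\mu(-L+\rho K_S)\otimes E^{\otimes -r}\otimes\O_M^{\vir})$ to the universal formula of Conjecture \ref{conj:Verlindevir}, absorb the sign into $w^{\frac12}\mapsto -w^{\frac12}$, use $a_iK_S=a_i^2$ for Seiberg--Witten basic classes, and match exponents. Your derivation matches the paper's reasoning, and you correctly identify exactly why the statement remains conjectural (the term-by-term matching of universal series is not justified, and Conjecture \ref{conj:Verlindevir} is itself unproven); your additional relations for $A_r$ and $A_{i,r}$ are consistent with the identity $A_{\pm1}(w^{\frac12})=(1+v)^{-\frac12}\bigl(B_{\mp1}(-w^{\frac12})/B_{\pm1}(w^{\frac12})\bigr)^{\frac12}$ recorded in Section \ref{sec:alg}.
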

As a consequence of this conjecture, the universal functions $A_r, B_r, A_{i,r}, B_{ij,r}$ with $r<0$ are determined by the universal functions with $r>0$ (and vice versa).
In the cases where we have explicit (conjectural) algebraic expressions for $A_r, B_r, A_{i,r}$, $B_{ij,r}$, we show that they satisfy the equations of this conjecture (see also Section \ref{sec:alg}).

\section{Virtual Segre numbers} \label{sec:segre}

\subsection{Hilbert schemes (Marian-Oprea-Pandharipande)}

Let $S$ be a smooth projective surface. Similar to the tautological bundles $L^{[n]}$ in the introduction, we can consider $K$-theoretic tautological classes as follows. For any $\alpha \in K^0(S)$, we define
\begin{equation*}
\alpha^{[n]} := q_! (p^* \alpha) \in K^0(S^{[n]}),
\end{equation*}
where $p$ and $q$ are projections from the universal subscheme as in the introduction. In \cite{MOP3}, Marian-Oprea-Pandharipande prove the following remarkable theorem.
\begin{theorem}[Marian-Oprea-Pandharipande] \label{MOPthm}
For any $s \in \Z$, there exist $V_s$, $W_s$, $X_s$, $Y_s$, $Z_s \in \Q[[z]]$ with the following property. For any smooth projective surface $S$ and $\alpha \in K^0(S)$ of rank $s$, we have
$$
\sum_{n=0}^{\infty} z^n \int_{S^{[n]}} c(\alpha^{[n]}) = V_s^{c_2(\alpha)} W_s^{c_1(\alpha)^2} X_s^{\chi(\O_S)} Y_s^{c_1(\alpha) K_S} Z_s^{K_S^2}.
$$
Moreover
\begin{align*}
V_s(z) &= (1+(1-s)t)^{1-s} (1+(2-s)t)^s, \\ 
W_s(z) &= (1+(1-s)t)^{\frac{1}{2}s-1} (1+(2-s)t)^{\frac{1}{2}(1-s)} \\
X_s(z) &= (1+(1-s)t)^{\frac{1}{2} s^2-s} (1+(2-s)t)^{-\frac{1}{2}s^2+\frac{1}{2}}(1+(1-s)(2-s)t)^{-\frac{1}{2}},
\end{align*}
where 
$$z = t(1+(1-s)t)^{1-s}.$$
\end{theorem}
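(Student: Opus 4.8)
The plan is to first reduce the statement to a universality assertion via \cite{EGL}, and then pin down the three $K_S$-independent series by explicit computation on $K$-trivial surfaces.

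First I would establish the product structure. For a class $\alpha \in K^0(S)$ of rank $s$, the tautological class $\alpha^{[n]} = q_!(p^*\alpha)$ is additive in $\alpha$, so its Chern classes, together with those of $T_{S^{[n]}}$, are polynomials in tautological classes. The EGL universality theorem (Theorem 4.1 of \cite{EGL}), in the form covering products of tautological bundles, shows that $\int_{S^{[n]}} c(\alpha^{[n]})$ is a universal polynomial in $s$ and the five numbers $c_2(\alpha)$, $c_1(\alpha)^2$, $c_1(\alpha)K_S$, $K_S^2$, $\chi(\O_S)$ (using Noether's formula $\chi(\O_S) = \frac{1}{12}(K_S^2 + c_2(S))$). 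Because the total Chern class is multiplicative and Hilbert-scheme generating series have the exponential structure already visible in G\"ottsche's formula \eqref{eulerrk1}, the logarithm of $\sum_n z^n \int_{S^{[n]}} c(\alpha^{[n]})$ is linear in these five Chern numbers. This yields universal series $V_s, W_s, X_s, Y_s, Z_s \in \Q[[z]]$ with the asserted product form, so the existence half of the theorem follows at once.

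Next I would isolate $V_s, W_s, X_s$. Restricting to $K$-trivial surfaces (K3 and abelian surfaces), the exponents $c_1(\alpha)K_S$ and $K_S^2$ vanish, so the generating function collapses to $V_s^{c_2(\alpha)} W_s^{c_1(\alpha)^2} X_s^{\chi(\O_S)}$. Since only $c_1(\alpha)^2$ and $c_2(\alpha)$ enter, I may replace $\alpha$ by a $K$-theory class $\O_S^{\oplus(s-2)} \oplus L_1 \oplus L_2$ with prescribed $c_1(L_i)$ (negative ranks being harmless in $K^0(S)$), whereupon $c(\alpha^{[n]}) = c(L_1^{[n]}) c(L_2^{[n]})$ and the required inputs become the multi-tautological integrals $\int_{S^{[n]}} \prod_i c(L_i^{[n]})$. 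Varying $L_1, L_2$ and playing off K3 ($\chi(\O_S)=2$) against abelian surfaces ($\chi(\O_S)=0$, where the $X_s$-factor drops out) then determines $V_s, W_s, X_s$ individually. It remains to evaluate these $K$-trivial integrals in closed form: here I would use equivariant localization on the Hilbert schemes of $\C^2$ under the $(\C^*)^2$-action, whose fixed points are the monomial ideals indexed by partitions $\lambda \vdash n$, giving $\alpha$ an equivariant structure so that its equivariant Chern roots encode $c_1(\alpha)$ and $c_2(\alpha)$; alternatively one can bootstrap from the already-established $K$-trivial case of Lehn's conjecture (\cite{MOP1}, the $s=-1$ specialization $\alpha=-[L]$) using multiplicativity. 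Either route produces a sum over partitions of a product of hook/content-type weights, which I would resum and recognize, after the substitution $z = t(1+(1-s)t)^{1-s}$, as the stated algebraic functions, cross-checking against the specializations $s=-1$ (formula \eqref{lehnconj}) and $s=0,1$. The series $Y_s, Z_s$ exist by universality but are unconstrained by $K$-trivial surfaces, so their explicit forms (not claimed here) would require Voisin-type degeneration input as in \cite{MOP2}.

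The main obstacle is this final step: carrying out the all-orders combinatorial resummation of the partition sum into closed form, and in particular proving the nonobvious change of variables $z = t(1+(1-s)t)^{1-s}$, which is exactly what upgrades the abstract universal series to the explicit algebraic functions. The universality and the reduction to $K$-trivial surfaces are essentially formal consequences of \cite{EGL}, whereas establishing the precise closed forms for all $s$ simultaneously is where the genuine difficulty lies.
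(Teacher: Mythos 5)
First, a point of reference: the survey offers no proof of Theorem \ref{MOPthm} --- it is quoted from \cite{MOP3} --- so your attempt can only be compared against that source and against the universality machinery the survey does explain. Your first half is correct and standard: the reduction via \cite{EGL} to a universal function of $c_2(\alpha)$, $c_1(\alpha)^2$, $c_1(\alpha)K_S$, $K_S^2$, $\chi(\O_S)$, and the exponential/product structure obtained from multiplicativity of the total Chern class together with the decomposition of $(S'\sqcup S'')^{[n]}$, is exactly the mechanism used in the proof of Theorem \ref{thm:univ} in Section \ref{sec:struc}, and it does establish the existence of $V_s,W_s,X_s,Y_s,Z_s$. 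The reduction of $V_s,W_s,X_s$ to $K$-trivial surfaces, writing $\alpha=\O_S^{\oplus(s-2)}+L_1+L_2$ in $K^0(S)$ to decouple $c_1(\alpha)^2$ from $c_2(\alpha)$, is also sound.

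The gap sits in the only part of the theorem with real content: the closed forms. Your route (b) fails at a concrete step: multiplicativity gives $c(\alpha^{[n]})=c(L_1^{[n]})\,c(L_2^{[n]})$ as cohomology classes, but $\int_{S^{[n]}}c(L_1^{[n]})c(L_2^{[n]})$ is not determined by the two rank-one series $\int_{S^{[n]}}c(L_i^{[n]})$ --- the integral of a product is not the product of the integrals --- so the established $K$-trivial case of Lehn's conjecture ($s=-1$) does not bootstrap to general $s$. Your route (a), equivariant localization, computes the coefficients of the universal series order by order (this is precisely the verification algorithm of Section \ref{sec:toric}), but you concede that the all-orders resummation is the ``main obstacle''; no such resummation is known, and as the survey itself stresses, passing from the universal series to closed formulae is where the two decades of difficulty lay. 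In \cite{MOP1,MOP2,MOP3} the $K$-trivial series are pinned down not by resumming fixed-point sums but by additional geometric input on K3 surfaces (interpretations of the tautological integrals via Quot schemes and degeneracy loci, giving vanishing and finiteness statements in suitable numerical ranges), which, combined with the universal structure, forces the stated algebraic functions and the change of variables $z=t(1+(1-s)t)^{1-s}$. That input is what your proposal is missing, so as written it proves the existence statement but not the displayed formulae for $V_s,W_s,X_s$.
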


As we discussed in the introduction, Lehn's conjecture provides explicit formulae for $(V_{-1}W_{-1})$, $X_{-1}$,$Y_{-1}$,$Z_{-1}$, cf.~\eqref{lehnconj}.\footnote{Note that \cite{MOP3} use a different change of variables compared to \cite{Leh}. Hence the formulae in Theorem \ref{MOPthm} and \eqref{lehnconj} look different.}
Lehn's conjecture was established in \cite{MOP2} building on \cite{MOP1, Voi}. 
Furthermore, Marian-Oprea-Pandharipande proved closed formulae for $Y_s, Z_s$ 
for $s \in \{-2,-1,1,2\}$ in \cite{MOP3}  and gave a conjectural formula for $Y_0$ (note: $Z_0=1$ is trivial).
This led to the following conjecture.
\begin{conjecture}[Marian-Oprea-Pandharipande] \label{MOPconj2}
$Y_s$ and $Z_s$ are algebraic functions for all $s$.
\end{conjecture}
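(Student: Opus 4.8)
The plan is to prove algebraicity directly, by extracting $Y_s$ and $Z_s$ as explicit combinatorial generating functions and then identifying the algebraic equation they satisfy. Since Theorem~\ref{MOPthm} already exhibits $V_s, W_s, X_s$ as algebraic functions of $z$, the entire content of Conjecture~\ref{MOPconj2} lies in the two series $Y_s$ and $Z_s$, which carry the dependence on $c_1(\alpha)K_S$ and $K_S^2$. By the universality in Theorem~\ref{MOPthm}, these two series are pinned down by evaluating $\int_{S^{[n]}} c(\alpha^{[n]})$ on any family of surfaces realizing independent values of $c_1(\alpha)K_S$ and $K_S^2$ (the remaining invariants being fixed); the toric surfaces $\PP^2$ and $\PP^1 \times \PP^1$ together with their iterated blow-ups suffice.

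On such a toric surface, Atiyah-Bott localization expresses each Segre number as a finite sum over the torus-fixed points of $S^{[n]}$ --- equivalently over monomial ideals, hence over partitions --- with weight a product over the boxes of each Young diagram of an explicit rational function of the equivariant tangent weights and of the arm/leg statistics; the rank $s$ of $\alpha$ enters only through these box weights. Summing over $n$ then presents $Y_s$ and $Z_s$ as infinite partition sums whose summands are products over boxes. The key structural input I would invoke is that such box-product partition functions are computed by operators on the infinite wedge (Fock space): when the per-box weight is the exponential of a quantity linear in the content, the sum is expressed through a bounded number of vertex operators and is therefore the solution of an algebraic functional equation, the Lagrange-type substitution $z = t(1+(1-s)t)^{1-s}$ of Theorem~\ref{MOPthm} being precisely the shadow of this rationalization.

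I would use two further inputs as guides and consistency checks. First, virtual Serre duality yields a functional equation relating the series at $s$ and a dual parameter (paralleling the rank-one Verlinde relation $A_r = B_{-r}/B_r$), which, together with the known closed forms, seeds and constrains any proposed algebraic equation. Second, the Segre-Verlinde correspondence matches $(Y_s, Z_s)$ with the Verlinde universal series under an algebraic change of variables, so that a proof of algebraicity on either side transfers to the other; establishing this correspondence rigorously would give an independent route and a strong cross-check.

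The main obstacle is uniformity in the parameter $s$. For each fixed integer $s$ one can compute the localization sum to high order and, as Marian-Oprea-Pandharipande did for the small values $s \in \{-2,-1,1,2\}$, identify an algebraic equation together with the rationalizing substitution; the difficulty is upgrading this to a single argument valid for all $s$ simultaneously. Concretely, I would aim to derive from the vertex-operator presentation a master functional equation whose coefficients are polynomial in $s$, so that algebraicity is inherited uniformly rather than checked case by case. The technical heart is that the per-box weights involve $s$ in the exponents rather than polynomially, and one must show they nonetheless organize into a finite-complexity vertex-operator expression whose defining equation is algebraic with $s$-dependent coefficients. Controlling this finite complexity uniformly in $s$ is where the real work lies, and once it is in place the algebraicity of $Y_s$ and $Z_s$ --- and hence Conjecture~\ref{MOPconj2} --- follows.
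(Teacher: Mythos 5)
There is a genuine gap here, and it begins with the status of the statement itself: Conjecture~\ref{MOPconj2} is an \emph{open conjecture} attributed to Marian--Oprea--Pandharipande, and the paper offers no proof of it. The only evidence recorded is Theorem~\ref{MOPthm} (closed algebraic forms for $V_s,W_s,X_s$), the closed formulae for $Y_s,Z_s$ at $s\in\{-2,-1,1,2\}$ from Lehn's conjecture and \cite{MOP3}, and a conjectural formula for $Y_0$. Your text is accordingly a research program rather than a proof, and you concede as much: the ``main obstacle'' of uniformity in $s$, which you defer to a hoped-for ``master functional equation,'' is precisely the open content of the conjecture. A proposal whose central step is flagged as unresolved cannot be counted as a proof of the statement.

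Beyond that, the two load-bearing claims in your sketch do not hold up as stated. First, the assertion that the localization sum over partitions is ``expressed through a bounded number of vertex operators and is therefore the solution of an algebraic functional equation'' is asserted, not argued. The per-box weight here is the contribution of the equivariant total Chern class of the rank-$sn$ tautological sheaf $\alpha^{[n]}$ at a monomial ideal; it is a product of linear factors in the tangent weights, not ``the exponential of a quantity linear in the content,'' and generic box-product partition sums of this kind produce quasimodular or transcendental series, not algebraic ones. The Lagrange substitution $z=t(1+(1-s)t)^{1-s}$ in Theorem~\ref{MOPthm} rationalizes $V_s,W_s,X_s$ but is not known to do so for $Y_s,Z_s$ --- that is exactly what is being conjectured. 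Second, the two ``inputs'' you invoke as independent routes are themselves conjectural in the needed generality: the Segre--Verlinde correspondence of Johnson and Marian--Oprea--Pandharipande is stated in the paper as a conjecture, and the paper already observes that it would only show Conjectures~\ref{MOPconj1} and~\ref{MOPconj2} are \emph{equivalent}, not that either is true; the Serre-duality relation $A_r=B_{-r}/B_r$ constrains the series but cannot force algebraicity. So the proposal neither matches a proof in the paper (there is none) nor supplies one.
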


\subsection{Arbitrary rank} \label{sec:Segre:arbrk}

We are interested in virtual Segre numbers on Gieseker-Maruyama moduli spaces of any rank on any smooth polarized surface $(S,H)$ satisfying $b_1(S)=0$. This requires us to define the analog of the tautological classes $\alpha^{[n]}$. As before, we consider $M:=M_S^H(\rho,c_1,c_2)$ for any $\rho>0$. We assume $M$ does not contain strictly Gieseker $H$-semistable sheaves. For the moment we also assume there exists a universal sheaf $\E$ on $S \times M$. For any class $\alpha \in K^0(S)$, we define
$$
\ch(\alpha_M) :=  \ch(- \pi_{M !} ( \pi_S^* \alpha \cdot \E \cdot  \det(\E)^{-\frac{1}{\rho}})) \in A^*(M)_\Q,
$$
where $A^*(M)_{\Q}$ denotes the Chow ring with rational coefficients. When the root $\det(\E)^{-1/\rho}$ does not exist, the right hand side is defined by a formal application of the Grothendieck-Riemann-Roch formula. 
We note the following:
\begin{itemize}
\item For $c_1=0$, $M:=M_S^H(1,0,n) \cong S^{[n]}$ and $\ch_i(\alpha_M) = \ch_i(\alpha^{[n]})$ for all $i>0$.
\item $\ch(\alpha_M)$ is invariant upon replacing $\E$ by $\E \otimes \mathcal{L}$ for any line bundle $\mathcal{L}$ on $M$ (due to the factor $\det(\E)^{-\frac{1}{\rho}}$). Hence $\alpha_M$ is independent of the choice of universal sheaf.
\item After applying the Grothendieck-Riemann-Roch formula, the right hand side involves the expression $\ch(\E \otimes \det(\E) ^{-1/\rho})$ which can be rewritten as $\ch(\E^{\otimes \rho} \otimes \det(\E) ^{-1})^{1/\rho}$. The sheaf $\E^{\otimes \rho} \otimes \det(\E) ^{-1}$ always exists on $S \times M$ also when the universal sheaf $\E$ does not exist globally on $S \times M$. In this way, the insertion $\ch(\alpha_M)$ is defined without assuming the existence of a universal sheaf $\E$ on $S \times M$.
\end{itemize}

\begin{conjecture} \label{conj:Segrevir} \cite{GK4}
Let $\rho > 0$ and $s \in \Z$. There exist  $V_s$, $W_s$, $X_s \in \C[[z]]$, $Y_s$, $Z_s$, $Y_{i,s}$, $Z_{ij,s} \in \C[[z^{\frac{1}{2}}]]$, for all $1 \leq i \leq j \leq \rho-1$, with the following property.\footnote{These universal functions depend on $\rho$ and $s$. We suppress the dependence on $\rho$.} Let $(S,H)$ be a smooth polarized surface satisfying $b_1(S) = 0$ and  $p_g(S)>0$. Suppose $M:=M_S^H(\rho,c_1,c_2)$ contains no strictly Gieseker $H$-semistable sheaves. For any $\alpha \in K^0(S)$ such that $\rk(\alpha) = s$, the virtual Segre number $\int_{[M]^{\vir}} c( \alpha_M )$
equals the coefficient of $z^{\frac{1}{2} \vd(M)}$ of
\begin{align*}
\rho^{2 - \chi(\O_S)+K_S^2} \, V_s^{c_2(\alpha)} W_s^{c_1(\alpha)^2} X_s^{\chi(\O_S)} Y_{s}^{c_1(\alpha) K_S} Z_{s}^{K_S^2} \sum_{(a_1, \ldots, a_{\rho-1})} \prod_{i=1}^{\rho-1} \epsilon_{\rho}^{i a_i c_1} \, \SW(a_i) \, Y_{i,s}^{c_1(\alpha) a_i} \prod_{1 \leq i \leq j \leq \rho-1} Z_{ij,s}^{a_i a_j},
\end{align*}
where the sum is over all $(a_1, \ldots, a_{\rho-1}) \in H^2(S,\Z)^{\rho-1}$ and $\epsilon_\rho :=e^{2 \pi \sqrt{-1} / \rho}$. Moreover
\begin{align*}
V_s(z) &= \Big(1+\Big(1-\frac{s}{\rho}\Big)t\Big)^{1-s} \Big(1+\Big(2-\frac{s}{\rho}\Big)t\Big)^s \Big(1+\Big(1-\frac{s}{\rho}\Big)t\Big)^{\rho-1} , \\ 
W_s(z) &= \Big(1+\Big(1-\frac{s}{\rho}\Big)t\Big)^{\frac{1}{2}s-1} \Big(1+\Big(2-\frac{s}{\rho}\Big)t\Big)^{\frac{1}{2}(1-s)} \Big(1+\Big(1-\frac{s}{\rho}\Big)t\Big)^{\frac{1}{2} - \frac{1}{2} \rho}, \\
X_s(z) &=  \Big(1+\Big(1-\frac{s}{\rho}\Big)t\Big)^{\frac{1}{2} s^2-s} \Big(1+\Big(2-\frac{s}{\rho}\Big)t\Big)^{-\frac{1}{2}s^2+\frac{1}{2}} \Big(1+\Big(1-\frac{s}{\rho}\Big)\Big(2-\frac{s}{\rho}\Big)t\Big)^{-\frac{1}{2}} \\
&\quad\quad \times \Big(1+\Big(1-\frac{s}{\rho}\Big)t\Big)^{-\frac{(\rho-1)^2}{2\rho} s}, 
\end{align*}
where $$z = t \Big(1+\Big(1-\frac{s}{\rho}\Big) t\Big)^{1-\frac{s}{\rho}}.$$ Furthermore, $Y_s$, $Z_{s}$, $Y_{i,s}$, $Z_{ij,s}$ are algebraic functions for all $s,i,j$.
\end{conjecture}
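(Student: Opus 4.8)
The plan is to break the conjecture into three parts of increasing difficulty: (i) the \emph{structural} claim that universal power series $V_s,W_s,X_s,Y_s,Z_s,Y_{i,s},Z_{ij,s}$ exist and that $\int_{[M]^{\vir}} c(\alpha_M)$ assembles into the displayed product of exponentials times a Seiberg--Witten sum; (ii) the explicit closed forms for $V_s,W_s,X_s$; and (iii) the closed forms and \emph{algebraicity} of the remaining, $K_S$-dependent functions $Y_s,Z_s,Y_{i,s},Z_{ij,s}$.

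For (i) I would feed the insertion $c(\alpha_M)$ into Mochizuki's formula as packaged by the universal function of Theorem \ref{thm:univ}. Since $\ch(\alpha_M)$ is the Grothendieck--Riemann--Roch image of $-\pi_{M!}(\pi_S^*\alpha\cdot \E\cdot\det(\E)^{-1/\rho})$, and the problematic root can be absorbed by writing $\ch(\E\otimes\det(\E)^{-1/\rho})=\ch(\E^{\otimes\rho}\otimes\det(\E)^{-1})^{1/\rho}$, it is a descendent insertion of exactly the type Mochizuki's wall-crossing/residue formula controls. That formula rewrites the integral over $[M]^{\vir}$ as a sum, indexed by tuples of Seiberg--Witten classes $(a_1,\ldots,a_{\rho-1})$, of residues of integrals over products of Hilbert schemes of points on $S$; multiplicativity of total Chern classes and the Ellingsrud--G\"ottsche--Lehn universality \cite{EGL} for such Hilbert-scheme integrals then force the answer into a product of universal series raised to the intersection numbers $c_2(\alpha)$, $c_1(\alpha)^2$, $\chi(\O_S)$, $c_1(\alpha)K_S$, $K_S^2$ and the pairings $c_1(\alpha)a_i$, $a_ia_j$, with the prefactor $\rho^{2-\chi(\O_S)+K_S^2}\prod_i\epsilon_\rho^{i a_i c_1}$ emerging from the rank and the determinant twist. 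This establishes the shape and the existence of the universal functions, computable to any order in $z$.

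Part (ii) I would settle by specializing to K3 surfaces. There $K_S=0$, the only basic class is $a=0$ with $\SW(0)=1$, and the formula collapses to $V_s^{c_2(\alpha)}W_s^{c_1(\alpha)^2}X_s^{\chi(\O_S)}$. By deformation invariance of the virtual class, together with the fact that $M_S^H(\rho,c_1,c_2)$ is smooth of expected dimension and deformation equivalent to $S^{[\vd(M)/2]}$ \cite{OG,Huy,Yos4}, and a Fujiki-type transport of the universal classes as in \cite[Prop.~1.10]{GNY2}, the virtual Segre number becomes an honest Segre number of the tautological class matching $\alpha_M$ on the Hilbert scheme. Tracking the $\det(\E)^{-1/\rho}$ twist through this identification turns Theorem \ref{MOPthm} into the asserted formulas: the substitution $s\mapsto s/\rho$ and the extra factor $(1+(1-s/\rho)t)^{\rho-1}$ (and the matching change of variables $z=t(1+(1-s/\rho)t)^{1-s/\rho}$) are precisely the footprint of the twist and of the rank-$\rho$ determinant line. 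Varying $\alpha$ and $c_2$ over the K3-accessible range separates and determines $V_s,W_s,X_s$ as power series, hence by universality in general.

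The hard part is (iii): the functions $Y_s,Z_s,Y_{i,s},Z_{ij,s}$ are invisible on K3, and their algebraicity is the real content of the conjecture. I would extract them by evaluating Mochizuki's formula on minimal surfaces of general type, where the basic classes are exactly $0$ and $K_S$ with $\SW(K_S)=(-1)^{\chi(\O_S)}$, so that the Seiberg--Witten sum has few terms and each universal function is isolated as an explicit generating series of residues of Hilbert-scheme integrals; these can be computed to high order by the toric localization of Section \ref{sec:toric}, and closed algebraic forms then guessed, exactly as Marian--Oprea--Pandharipande did for the rank-one $Y_s,Z_s$ of Theorem \ref{MOPthm}. Proving algebraicity unconditionally is the genuine obstacle. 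I would attempt it by deriving a closed system of algebraic relations: the virtual Serre-duality constraints of Conjecture \ref{conj:SDvir} relating $s$ to $-s$, the expected Segre--Verlinde correspondence tying these functions to the \emph{algebraic} Verlinde functions of Conjecture \ref{conj:Verlindevir}, and the $t$-substitution that already linearizes $V_s,W_s,X_s$. Even so, unconditional algebraicity for all $s$ subsumes the still-open Conjecture \ref{MOPconj2}; realistically one can hope to prove (i), (ii), and the finite-order verifications, and to reduce (iii) to a sharpened, checkable form of the Segre--Verlinde correspondence.
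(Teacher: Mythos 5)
The statement you are proving is a \emph{conjecture}: the paper offers no proof of it, only (a) a universality theorem in the style of Theorem \ref{thm:univ} expressing the invariant as a \emph{residue} of a two-variable universal expression, and (b) finite-order verifications via the toric computations of Section \ref{sec:toric}. Your overall triage --- structural form, explicit $V_s,W_s,X_s$ via K3, algebraicity as the hard core --- matches the paper's discussion, and you are right that unconditional algebraicity subsumes the open Conjecture \ref{MOPconj2}. But your part (i) contains a genuine gap. Mochizuki's formula (Theorem \ref{mocthm}) together with \cite{EGL}-universality gives, for each tuple $(a_1,\dots,a_{\rho-1})$, an expression of the form $\Res_{t_1}\cdots\Res_{t_{\rho-1}}$ of a product of universal functions $A_i(\boldsymbol{t},q)$ raised to the relevant Chern numbers. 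The residue operation does \emph{not} commute with forming products and powers, so it does not follow that the final answer is again a product of one-variable universal power series $V_s^{c_2(\alpha)}W_s^{c_1(\alpha)^2}\cdots Z_{ij,s}^{a_ia_j}$. That passage --- from a residue of a multiplicative expression to a genuinely multiplicative expression --- is precisely part of the conjectural content, and no argument you give supplies it. In addition, even the residue formula in the clean form you want (sum over \emph{all} $(a_1,\dots,a_{\rho-1})\in H^2(S,\Z)^{\rho-1}$, no Hilbert-polynomial inequalities, no positivity hypothesis on $\chi$) requires the conjectural strengthening of Mochizuki's theorem recorded in Remark \ref{strongmoc}; the unconditional statement carries Conditions (3)--(4) and a restricted summation range.

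Part (ii) is essentially the paper's route (deformation equivalence of $M$ to a Hilbert scheme on K3, a Fujiki-type transport as in \cite[Prop.~1.10]{GNY2}, then Theorem \ref{MOPthm}), though note that $\alpha_M$ carries the $\det(\E)^{-1/\rho}$ twist, so on the Hilbert-scheme side you land on a \emph{twisted} tautological Segre integral rather than $\int c(\alpha^{[n]})$ itself; you would need the twisted extension of Marian--Oprea--Pandharipande's universality to justify the ``$s\mapsto s/\rho$ plus correction factors'' bookkeeping, and in any case this only pins down $V_s,W_s,X_s$ \emph{conditionally} on the multiplicative structure from part (i). For part (iii) you correctly concede that no proof is available; the paper's actual evidence is the explicit conjectural algebraic expressions of Section \ref{sec:alg} for $\rho=2,3,4$, their consistency with Conjectures \ref{conj:SDvir} and \ref{conj:SVcorrvir}, and numerical verification to finite order. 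So the honest summary is: your proposal reproduces the paper's evidence-gathering strategy but does not close either the multiplicativity gap in (i) or the algebraicity claim in (iii), and hence does not constitute a proof of the statement --- consistent with its status as a conjecture.
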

For $\rho=2$, $3$, $4$, and various values of $s$, we have explicit (conjectural) algebraic expressions for $Y_s$, $Z_s$, $Y_{i,s}$, $Z_{ij,s}$ \cite{GK4}. We give some examples of these in Section \ref{sec:alg}. Similar to previous sections, we verified Conjecture \ref{conj:Segrevir} for $\rho=2$, $3$, $4$, and various values of $s$ for a certain list surfaces and up to certain virtual dimensions (using the strategy outlined in Section \ref{sec:struc}). The precise list of verifications can be found in \cite{GK4}.

\subsection{Virtual Segre-Verlinde correspondence} \label{sec:SVcorr}

In the rank 1 case, using the explicit expressions for the universal functions of Theorems \ref{EGLthm} and \ref{MOPthm}, one obtains
\begin{align*}
f_{r}(w) &= W_s(z)^{-4s} X_s(z)^2, \\
 g_{r}(w) &= V_s(z) W_s(z)^2, 
\end{align*}
where $s = 1+r$ and 
\begin{equation} \label{varchange1}
w = v(1+v)^{r^2-1}, \quad z = t(1+(1-s)t)^{1-s}, \quad v=t(1- r t)^{-1}.
\end{equation} 
Based on work of D.~Johnson \cite{Joh}, which was motivated by strange duality, Marian-Oprea-Pandharipande \cite{MOP3} formulated the following ``Segre-Verlinde correspondence''.\footnote{We slightly restated the formulation of \cite{MOP3} by connecting the variables $v,t$ via $v=t(1- r t)^{-1}$.}
\begin{conjecture}[Johnson, Marian-Oprea-Pandharipande] 
For any $r \in \Z$, $s=1+r$, and under the formal variable change \eqref{varchange1}, we have 
\begin{align*}
A_{r}(w) &= W_s(z)Y_s(z), \\  
B_{r}(w) &= Z_s(z).
\end{align*}
\end{conjecture}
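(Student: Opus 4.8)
The plan is to prove the statement entirely at the level of the universal power series of Theorems~\ref{EGLthm} and~\ref{MOPthm}, so that it becomes a pair of closed identities of generating functions under the substitution~\eqref{varchange1}. As recorded above, the relations $g_r = V_s W_s^2$ and $f_r = W_s^{-4s} X_s^2$ follow by direct substitution from the explicit formulas for $g_r, f_r$ (Theorem~\ref{EGLthm}) and $V_s, W_s, X_s$ (Theorem~\ref{MOPthm}); together with the dictionary identifying the Verlinde data $(L,r)$ with the Segre data $(\alpha, s = 1+r)$, they already account for the pieces governed by $\chi(L)$ and $\chi(\O_S)$. The entire remaining content is therefore the pair of identities $A_r(w) = W_s(z) Y_s(z)$ and $B_r(w) = Z_s(z)$, which carry the full dependence on $L K_S$ and $K_S^2$.

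First I would extract every functional equation these functions must satisfy. On the Verlinde side, Serre duality gives $A_r = B_{-r}/B_r$ together with the normalisations $A_r = B_r = 1$ for $r \in \{0,\pm 1\}$; running the same argument on the Segre side (using that $r \mapsto -r$ corresponds to $s \mapsto 2-s$) produces the matching relations among $Y_s, Z_s$. Combined with~\eqref{varchange1}, under which $r \mapsto -r$ becomes an explicit involution of the variables, these relations let one verify the correspondence on the base cases $r = 0, \pm 1$ (where $W_1 = 1$ forces $Y_1 = Z_1 = 1$) and reduce the general case to the functions $A_r, B_r$ with $r > 0$.

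The main engine is then a direct computation of both generating series in the Nakajima Fock space $\bigoplus_n H^*(S^{[n]})$. Via Grothendieck--Riemann--Roch the Verlinde series $\sum_n w^n \chi(S^{[n]}, \mu(L) \otimes E^{\otimes r}) = \sum_n w^n \int_{S^{[n]}} \ch(\mu(L)\otimes E^{\otimes r})\,\td(S^{[n]})$ and the Segre series $\sum_n z^n \int_{S^{[n]}} c(\alpha^{[n]})$ become tautological integrals that can both be written as vacuum matrix elements of vertex operators assembled from $\mu(L)$, $E$, and $\alpha^{[n]}$; I expect these two operators to be intertwined by an explicit linear reparametrisation of the Heisenberg generators whose induced effect on the generating variable is precisely~\eqref{varchange1}. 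Isolating the commutators that involve the canonical class $K_S$ then pins down $A_r, B_r$ and $Y_s, Z_s$ separately and, in principle, yields the two identities. Conceptually this is the shadow of strange duality that motivated Johnson's work, so an alternative route is to realise the correspondence as a specialisation of a Le Potier--type strange-duality isomorphism, with~\eqref{varchange1} encoding the level--rank exchange.

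The hard part will be the closed form, and even the algebraicity, of $Y_s, Z_s$ (equivalently $A_r, B_r$) for general $s$: these are exactly the functions that Marian--Oprea--Pandharipande could evaluate only for $s \in \{-2,-1,1,2\}$, and Conjectures~\ref{MOPconj1} and~\ref{MOPconj2} assert their algebraicity without proof. I therefore expect the crux to lie in extracting, from the vertex-operator computation or from the Serre-duality functional equation together with~\eqref{varchange1}, the single algebraic equation satisfied by each universal function, and then checking that the Verlinde and Segre functions are roots of the same equation. Absent such a structural identification, one can still confirm the correspondence to any prescribed order in $w$ by the toric computations of Section~\ref{sec:struc}, which settles finitely many coefficients but not the full statement.
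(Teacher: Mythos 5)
The statement you are addressing is recorded in the paper as a \emph{conjecture} (attributed to Johnson and Marian--Oprea--Pandharipande); the paper gives no proof of it. What the paper does record is exactly the elementary part: that the explicit formulas of Theorems \ref{EGLthm} and \ref{MOPthm} yield $g_r = V_s W_s^2$ and $f_r = W_s^{-4s} X_s^2$ under \eqref{varchange1}, and that the conjecture would make Conjectures \ref{MOPconj1} and \ref{MOPconj2} equivalent. So there is no proof in the paper to compare against, and the question is whether your proposal would itself constitute one.

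It would not, and you essentially concede this yourself. The parts of your argument that are solid --- the substitution check for $g_r, f_r$ against $V_s, W_s, X_s$, the Serre-duality relation $A_r = B_{-r}/B_r$ with its Segre-side analogue under $s \mapsto 2-s$, and the normalisations at $r = 0, \pm 1$ --- reproduce evidence already available and do not touch the content of the conjecture, which is precisely the pair of identities for $A_r, B_r$ versus $Y_s, Z_s$ for general $r$. For those, your ``main engine'' is a hoped-for intertwining of two vertex operators on Nakajima's Fock space by a reparametrisation of the Heisenberg generators inducing \eqref{varchange1}: you never construct the operators, never exhibit the intertwiner, and never explain why its effect on the $K_S$-dependent commutators would isolate $A_r, B_r$ and $Y_s, Z_s$ in the stated way; the phrase ``I expect'' carries the entire weight. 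The fallback of extracting ``the single algebraic equation satisfied by each universal function'' presupposes algebraicity, which is itself the open content of Conjectures \ref{MOPconj1} and \ref{MOPconj2}, and the toric computations you invoke at the end confirm only finitely many coefficients, which is the evidential status the conjecture already has. In short, every step of your proposal that goes beyond what the paper records is asserted rather than established, so there is a genuine gap at the heart of the argument: the mechanism that would relate the $LK_S$- and $K_S^2$-governed universal series on the two sides is never produced.
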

In particular, this conjecture implies that Conjectures \ref{MOPconj1} and \ref{MOPconj2} are equivalent.

Similar to the rank 1 case, for any $\rho>0$ and $s \in \Z$, a direct calculation shows that the universal functions of Conjectures \ref{conj:Verlindevir} (equation \eqref{fg}) and \ref{conj:Segrevir} are related as follows
\begin{align*}
f_{r/\rho}(w) &= V_s(z)^{\frac{s}{\rho} (\rho^{\frac{1}{2}} - \rho^{-\frac{1}{2}})^2} W_s(z)^{-\frac{4s}{\rho}} X_s(z)^2, \\
g_{r/\rho}(w) &= V_s(z) W_s(z)^2,
\end{align*}
where $s = \rho+r$ and 
\begin{equation} \label{varchange2}
w = v(1+v)^{\frac{r^2}{\rho^2}-1}, \quad z = t \Big(1+\Big(1-\frac{s}{\rho}\Big) t\Big)^{1-\frac{s}{\rho}}, \quad v=t\Big(1-\frac{r}{\rho}t \Big)^{-1}.
\end{equation}
We present a ``virtual Segre-Verlinde correspondence'' for arbitrary rank $\rho$.
\begin{conjecture} \label{conj:SVcorrvir} \cite{GK4}
For any $\rho>0$, $r \in \Z$, $s=\rho+r$, and under the formal variable change \eqref{varchange2}, we have 
\begin{align*}
A_{r}(w^{\frac{1}{2}}) &= W_{s}(z) Y_{s}(z^{\frac{1}{2}}), \quad A_{i,r}(w^{\frac{1}{2}}) = Y_{i,s}(z^{\frac{1}{2}}),\\ 
B_{r}(w^{\frac{1}{2}}) &= Z_{s}(z^{\frac{1}{2}}), \quad \quad \quad \, B_{ij,r}(w^{\frac{1}{2}}) = Z_{ij,s}(z^{\frac{1}{2}}),
\end{align*}
for all $1 \leq i \leq j \leq \rho-1$.\footnote{The series $A_{r}, B_{r}, \ldots$ and $Y_{s}, Z_{s}, \ldots$ depend on $w^{\frac{1}{2}}$ and $z^{\frac{1}{2}}$, so strictly speaking we rather use the coordinate transformation $w^{\frac{1}{2}} = v^{\frac{1}{2}}(1+v)^{\frac{1}{2}(r^2/\rho^2-1)}$ etc.}
\end{conjecture}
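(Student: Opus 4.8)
The plan is to show that both conjectural generating functions of Conjectures \ref{conj:Verlindevir} and \ref{conj:Segrevir} arise from a \emph{single} universal residue expression coming from Mochizuki's formula, and then to deduce the correspondence by the substitution \eqref{varchange2}. First I would apply virtual Hirzebruch--Riemann--Roch \cite{CFK, FG} to rewrite the Verlinde number as a purely cohomological integral
\[
\chi(M, \mu(L) \otimes E^{\otimes r} \otimes \O_M^{\vir}) = \int_{[M]^{\vir}} e^{c_1(\mu(L) \otimes E^{\otimes r})} \, \td(T_M^{\vir}),
\]
so that, exactly like the Segre number $\int_{[M]^{\vir}} c(\alpha_M)$, it becomes a tautological integral of a characteristic class of a complex built from the universal sheaf $\E$ against $[M]^{\vir}$. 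Both sides then fall under the universality structure of Theorem \ref{thm:univ}: Mochizuki's formula expresses any such integral as an iterated residue in $\rho-1$ auxiliary variables whose integrand is the product of a fixed universal kernel (independent of the insertion) with a factor $\prod_i \phi(\xi_i)$, where $\phi$ is the multiplicative ``insertion function'' determined by the integrand and $\xi_i$ are the residue variables. The entire content of the correspondence is that the Verlinde and Segre insertion functions $\phi_{\mathrm{Ver}}$, $\phi_{\mathrm{Seg}}$ are intertwined by \eqref{varchange2}.

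Second, I would make the insertion functions explicit. On the Verlinde side, using \eqref{muLEr}--\eqref{lambdaE} together with $c_1(T_M^{\vir}) = -\rho\,\mu(K_S)$ \cite[Prop.~8.3.1]{HL}, the classes $c_1(\mu(L) \otimes E^{\otimes r})$ and $\td(T_M^{\vir})$ become universal power series in the Chern roots of the tautological class $\pi_{M!}(\pi_S^*\alpha \cdot \E \cdot \det(\E)^{-1/\rho})$, and the resulting $\phi_{\mathrm{Ver}}$ is the $K$-theoretic weight combining an exponential with the Todd factor. On the Segre side, $c(\alpha_M) = \prod_i (1+x_i)$ gives the weight $\phi_{\mathrm{Seg}}(x) = 1+x$. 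The crucial computation is that, after the substitution $v = t(1-\tfrac{r}{\rho}t)^{-1}$ with $s = \rho + r$ of \eqref{varchange2} relating the generating variables $w^{1/2}$ and $z^{1/2}$, one has $\phi_{\mathrm{Ver}} = \phi_{\mathrm{Seg}}$ up to the scalar factor $W_s(z)$. This is precisely the rank-one input of Johnson and Marian--Oprea--Pandharipande \cite{Joh, MOP3}, now read off at the level of the insertion function rather than the final closed form; the $\det(\E)^{-1/\rho}$-twist in the definitions of $\alpha_M$ and of \eqref{muLEr} is what produces the shift $s = \rho + r$ and the rational exponents in \eqref{varchange2}.

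Third, I would feed these matched insertion functions back into the common residue formula. Because the universal kernel is the same for both invariants, substituting \eqref{varchange2} transforms the Verlinde iterated residue into the Segre one term by term. The leading factors match via the already-verified identities $f_{r/\rho} = V_s^{\cdots} W_s^{-4s/\rho} X_s^2$ and $g_{r/\rho} = V_s W_s^2$ of \eqref{fg} and the paragraph preceding the conjecture, together with the scalar $W_s(z)$ extracted above, which accounts for the discrepancy between $A_r = W_s Y_s$ (carrying the scalar $W_s$) and $A_{i,r} = Y_{i,s}$ (carrying none). The Seiberg--Witten--indexed functions $A_{i,r}, B_{ij,r}$ and $Y_{i,s}, Z_{ij,s}$ arise solely from the \emph{linear} and \emph{quadratic} dependence of the residue on the auxiliary variables $\xi_i$ associated to each $a_i$; since the insertion $\phi$ enters these multiplicatively and identically on both sides, the substitution matches $A_{i,r}(w^{1/2}) = Y_{i,s}(z^{1/2})$ and $B_{ij,r}(w^{1/2}) = Z_{ij,s}(z^{1/2})$ with no extra scalar, while $B_r = Z_s$ follows from the diagonal $i=j$ part. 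Algebraicity of $Y_s, Z_s, Y_{i,s}, Z_{ij,s}$, hence of $A_r, B_r, A_{i,r}, B_{ij,r}$, would then follow because the residue of an algebraic weight against an algebraic kernel is algebraic, bootstrapping from the rank-one case.

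The hard part will be making the universality of Theorem \ref{thm:univ} precise enough to isolate a single insertion-independent kernel and to prove the intertwining $\phi_{\mathrm{Ver}} \mapsto \phi_{\mathrm{Seg}}$ \emph{simultaneously} for the leading factors and for every Seiberg--Witten--indexed factor. Unlike the rank-one case, where explicit closed forms for all universal functions are available and the correspondence is checked by direct substitution, here the functions $A_r, B_r, A_{i,r}, B_{ij,r}$ have no known closed form, so the argument must be carried out structurally inside Mochizuki's residue formula. Controlling the $\det(\E)^{-1/\rho}$-normalization through the residues --- in particular verifying that it contributes exactly the factor $(\rho^{1/2}-\rho^{-1/2})^2$ seen in the $f$--$V$ relation and the single scalar $W_s$ in $A_r = W_s Y_s$, and nothing further in the quadratic factors --- is where I expect the main technical difficulty to lie.
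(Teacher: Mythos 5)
The statement you are addressing is presented in the paper as a \emph{conjecture}, and the paper offers no proof of it. The only support given there is: (i) a direct computation verifying the analogous relations between the explicitly known universal functions $f_{r/\rho}, g_{r/\rho}$ and $V_s, W_s, X_s$ under the variable change \eqref{varchange2}; (ii) consistency with the explicit conjectural algebraic expressions for $A_r$, $B_r$, $A_{i,r}$, $B_{ij,r}$ and $Y_s$, $Z_s$, $Y_{i,s}$, $Z_{ij,s}$ in the low-rank examples of Section \ref{sec:alg}; and (iii) the rank-one precedent of Johnson and Marian--Oprea--Pandharipande, which is itself stated only as a conjecture. So there is no proof in the paper to compare yours against, and your proposal has to be judged on its own merits as an attempted proof of an open statement.

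Judged that way, it has a genuine gap at its central step. Your plan reduces everything to the claim that, inside Mochizuki's iterated residue, the Verlinde insertion $\phi_{\mathrm{Ver}}$ (an exponential times a Todd factor) and the Segre insertion $\phi_{\mathrm{Seg}}(x)=1+x$ are intertwined by \eqref{varchange2} ``term by term'', uniformly across the Seiberg--Witten--indexed factors. But the variable change acts on the generating variable $w^{\frac{1}{2}}$ (resp.\ $z^{\frac{1}{2}}$), which only appears \emph{after} the residues $\Res_{t_1}\cdots\Res_{t_{\rho-1}}$ have been taken and the individual universal series have been assembled via the exponentiation and basis-change (cobordism) argument of Theorem \ref{thm:univ}; a formal substitution in the generating variable does not commute with these operations, so there is no single insertion-independent kernel on which the substitution can be seen to act multiplicatively. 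In effect, the asserted intertwining \emph{is} the conjecture restated one level down, and it is not established even for $\rho=1$, where the Segre--Verlinde correspondence is likewise only conjectural. Two further points would fail as written: the scalar $W_s(z)$ appearing in $A_r = W_s Y_s$ but absent from $A_{i,r}=Y_{i,s}$ cannot be ``extracted from the insertion function'' without already knowing how the $\det(\E)^{-1/\rho}$-normalization distributes between the leading factors and the $a_i$-dependent factors, which is precisely the bookkeeping you defer to the end; and the algebraicity claim does not follow from ``the residue of an algebraic weight against an algebraic kernel is algebraic'' --- extracting the coefficient of $t^{-1}$ from a function algebraic in $(t,q)$ need not yield an algebraic function of $q$.
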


This conjecture implies that the algebraicity statements of Conjectures \ref{conj:Verlindevir} and \ref{conj:Segrevir} are equivalent.
Combining Conjectures \ref{conj:SVcorrvir} and \ref{conj:SDvir}, we obtain interesting relations among the universal functions of Conjecture \ref{conj:Segrevir}.  
In the cases where we have explicit (conjectural) algebraic expressions for $A_r$, $B_r$, $A_{i,r}$, $B_{ij,r}$, $Y_{\rho+r}$, $Z_{\rho+r}$, $Y_{i,\rho+r}$, $Z_{ij,\rho+r}$, we show that they satisfy the equations of this conjecture. We give some examples of this in Section \ref{sec:alg}.

\subsection{Algebraicity} \label{sec:alg}

As mentioned in Sections \ref{sec:verlinde:rkgeneral} and \ref{sec:Segre:arbrk}, the algebraicity part of Conjectures \ref{conj:Verlindevir} and  \ref{conj:Segrevir} are supported by explicit conjectural formulae for $A_r$, $B_r$, $A_{i,r}$, $B_{ij,r}$, $Y_s$, $Z_s$, $Y_{i,s}$, $Z_{ij,s}$ for several values of $\rho,r,s$. In this section, we present three examples of such formulae. They are verified on a list of surfaces and up to certain virtual dimensions using the methods of Section \ref{sec:struc}. See \cite{GK4} for the precise list of verifications and many more examples. The formulae we present are connected by the virtual Segre-Verlinde correspondence.
This provides checks of Conjectures \ref{conj:Verlindevir}, \ref{conj:SDvir}, \ref{conj:Segrevir}, and \ref{conj:SVcorrvir}. \\

\noindent {\bf Example 1 ($\rho=2$).} For $\rho=2$ we conjecture
\begin{align*}
Y_{1,s}(z^{\frac{1}{2}})&=\frac{Y_{s}(-z^{\frac{1}{2}})}{Y_{s}(z^{\frac{1}{2}})}, \quad Z_{11,s}(z^{\frac{1}{2}})=\frac{Z_{s}(-z^{\frac{1}{2}})}{Z_{s}(z^{\frac{1}{2}})},\\
A_{1,r}(w^{\frac{1}{2}})&=\frac{A_r(-w^{\frac{1}{2}})}{A_r(w^{\frac{1}{2}})}, \, \, \, B_{11,r}(w^{\frac{1}{2}})=\frac{B_r(-w^{\frac{1}{2}})}{B_r(w^{\frac{1}{2}})},
\end{align*}
for any $s, r \in \Z$. For $s=1$, $r=-1$, $z=t(1+\smfr{1}{2}t)^{\frac{1}{2}}$, and $w=v(1+v)^{-\frac{3}{4}}$ 
we conjecture
\begin{align*}Y_1(z^{\frac{1}{2}})&=(1+t)+t^{\frac{1}{2}}(1+\smfr{3}{4}t)^{\frac{1}{2}},\quad
Z_1(z^{\frac{1}{2}})=\frac{1+\smfr{3}{4}t}{1+\smfr{1}{2}t}-\smfr{1}{2} t^{\frac{1}{2}} \frac{(1+\smfr{3}{4}t)^{\frac{1}{2}}}{1+\smfr{1}{2}t},\\
A_{-1}(w^{\frac{1}{2}})&=1+\smfr{1}{2}v+v^{\frac{1}{2}}(1+\smfr{1}{4}v)^{\frac{1}{2}}\quad
B_{-1}(w^{\frac{1}{2}})=1+\smfr{1}{4}v- \smfr{1}{2} v^{\frac{1}{2}} (1+\smfr{1}{4}v)^{\frac{1}{2}}.
\end{align*}
Taking
$v=t(1+\smfr{1}{2}t)^{-1}$, this is consistent with the virtual Segre-Verlinde correspondence. \\

\noindent {\bf Example 2 ($\rho=2$).} For $\rho=2$, $s=3$, $r=1$, $z=t(1-\smfr{1}{2}t)^{-\frac{1}{2}}$, and $w=v(1+v)^{-\frac{3}{4}}$, we conjecturally have
\begin{align*}
Y_{3}(z^{\frac{1}{2}})&=1+t^{\frac{1}{2}}(1-\smfr{1}{4}t)^{\frac{1}{2}},\\
Z_3(z^{\frac{1}{2}})&=\frac{1+\smfr{1}{2}t}{(1-\smfr{1}{2}t)^3} ((1-\smfr{1}{4}t)(1+\smfr{1}{2}t)-\smfr{3}{2}t^{\frac{1}{2}}(1-\smfr{1}{4}t)^{\frac{1}{2}}(1-\smfr{1}{6}t)),\\
A_{1}(w^{\frac{1}{2}})&=\frac{1+\smfr{1}{2}v+v^{\frac{1}{2}}(1+\smfr{1}{4}v)^{\frac{1}{2}}}{1+v},\\
B_{1}(w^{\frac{1}{2}})&=(1+v)((1+v)(1+\smfr{1}{4}v)-\smfr{3}{2}v^{\frac{1}{2}}(1+\smfr{1}{3}v)(1+\smfr{1}{4}v)^{\frac{1}{2}}).
\end{align*}
Taking
$v=t(1-\smfr{1}{2}t)^{-1}$, this is consistent with the virtual Segre-Verlinde correspondence.
Together with the previous example, we also immediately obtain the relations 
$$A_{\pm 1}(w^{\frac{1}{2}})=(1+v)^{-\frac{1}{2}} \Bigg( \frac{B_{\mp 1}(-w^{\frac{1}{2}})}{B_{\pm 1}(w^{\frac{1}{2}})} \Bigg)^{\frac{1}{2}}$$ predicted by virtual Serre duality (Conjecture \ref{conj:SDvir}). \\

\noindent {\bf Example 3 ($\rho=3$).} We take $\rho=3$ and define
\begin{align*}
a_1&:=(1+\smfr{2}{3}t)^{\frac{1}{2}}(2+\smfr{17}{6}t),\quad b_1:=\smfr{3}{2}t(1+\smfr{10}{9}t)^{\frac{1}{2}},\\ 
c_1&:=6t + \smfr{25}{2}t^2 + \smfr{20}{3}t^3,\quad \quad  d_1:=(6t+\smfr{17}{2}t^2)(1+\smfr{2}{3}t)^{\frac{1}{2}}(1+\smfr{10}{9}t)^{\frac{1}{2}},\\
a_2&:=(3+\smfr{10}{3}t)(1+\smfr{2}{3}t)^{\frac{1}{2}},\quad \,  b_2:=(1+\smfr{5}{3}t)(1+\smfr{10}{9}t)^{\frac{1}{2}},\\ 
c_2&:=6t+\smfr{35}{3}t^2+\smfr{50}{9}t^3,\quad \ \ \ d_2:=(6t+\smfr{20}{3}t^2)(1+\smfr{2}{3}t)^{\frac{1}{2}}(1+\smfr{10}{9}t)^{\frac{1}{2}}.
\end{align*}
On the Segre side, taking $s = 1$, $z=t(1+\smfr{2}{3}t)^{\frac{2}{3}}$, and suppressing the argument $z^{\frac{1}{2}}$, we conjecturally have
\begin{align*}
Y_1&=\smfr{1}{2}\big(a_1+b_1-\sqrt{c_1+d_1}\big),\quad \quad \ \, Y_1Y_{1,1}Y_{2,1}=\smfr{1}{2}\big(a_1+b_1+\sqrt{c_1+d_1}\big), \\ 
Y_1Y_{1,1}&=\smfr{1}{2}\big(a_1-b_1+\sqrt{c_1-d_1}\big),\quad \quad \ \, \quad \ \, Y_1Y_{2,1}=\smfr{1}{2}\big(a_1-b_1-\sqrt{c_1-d_1}\big),\\
Z_1&=\frac{a_2+b_2+\sqrt{c_2+d_2}}{2(1+\frac{2}{3}t)^{\frac{3}{2}}},\quad Z_1Z_{11,1}Z_{12,1}Z_{22,1}=\frac{a_2+b_2-\sqrt{c_2+d_2}}{2(1+\frac{2}{3}t)^{\frac{3}{2}}}, \\ 
Z_1Z_{11,1}&=\frac{a_2-b_2-\sqrt{c_2-d_2}}{2(1+\frac{2}{3}t)^{\frac{3}{2}}},\quad \quad \quad \quad \quad Z_1Z_{22,1}=\frac{a_2-b_2+\sqrt{c_2-d_2}}{2(1+\frac{2}{3}t)^{\frac{3}{2}}}.
\end{align*}
On the Verlinde side, we put 
\begin{align*}
\alpha_1&:=2+\smfr{3}{2}v,\quad \quad \quad \quad \, \  \ \beta_1:=\smfr{3}{2}v(1+\smfr{4}{9}v)^{\frac{1}{2}},\\ 
\gamma_1&:=6v + \smfr{9}{2}v^2 + v^3,\quad \quad \delta_1:=(6v+\smfr{9}{2}v^2)(1+\smfr{4}{9}v)^{\frac{1}{2}},\\
\alpha_2&:=3+\smfr{4}{3}v,\quad \quad \quad \quad \quad  \beta_2:=(1+v)(1+\smfr{4}{9}v)^{\frac{1}{2}},\\ 
\gamma_2&:=6v+\smfr{11}{3}v^2+\smfr{4}{9}v^3,\quad \, \delta_2:=(6v+\smfr{8}{3}v^2)(1+\smfr{4}{9}v)^{\frac{1}{2}}.
\end{align*}
Then for $r=-2$ and $w = v(1+v)^{-\frac{5}{9}}$, we conjecturally have
\begin{align*}
A_{-2}&=\smfr{1}{2}\big(\alpha_1+\beta_1-\sqrt{\gamma_1+\delta_1}\big),\quad \quad \quad \ \ A_{-2}A_{1,-2}A_{2,-2}=\smfr{1}{2}\big(\alpha_1+\beta_1+\sqrt{\gamma_1+\delta_1}\big), \\ 
A_{-2}A_{1,-2}&=\smfr{1}{2}\big(\alpha_1-\beta_1+\sqrt{\gamma_1-\delta_1}\big),\quad \quad \quad \quad \quad \ \, A_{-2}A_{2,-2}=\smfr{1}{2}\big(\alpha_1-\beta_1-\sqrt{\gamma_1-\delta_1}\big),\\
B_{-2}&=\smfr{1}{2}\big(\alpha_2+\beta_2+\sqrt{\gamma_2+\delta_2}\big),\quad B_{-2}B_{11,-2}B_{12,-2}B_{22,-2}=\smfr{1}{2}\big(\alpha_2+\beta_2-\sqrt{\gamma_2+\delta_2}\big), \\ 
B_{-2}B_{11,-2}&=\smfr{1}{2}\big(\alpha_2-\beta_2-\sqrt{\gamma_2-\delta_2}\big),\quad \quad \quad \quad \ \ \ \, B_{-2}B_{22,-2}=\smfr{1}{2}\big(\alpha_2-\beta_2+\sqrt{\gamma_2-\delta_2}\big).
\end{align*}
Taking $v = t(1+\frac{2}{3}t)^{-1}$, this is compatible with the virtual Segre-Verlinde correspondence.

\section{Universal functions} \label{sec:struc}

For each of the virtual invariants of Gieseker-Maruyama moduli spaces discussed in this survey, we can show that they are determined by a universal function in Chern numbers and Seiberg-Witten invariants.  The main ingredient for our universality results is Mochizuki's formula for descendent Donaldson invariants. 

After introducing Mochizuki's formula, we illustrate how to derive the universal function in the case of virtual Euler characteristics in the rank 2 case (Theorem \ref{thm:univ}). The strategy for the other virtual invariants of this survey is similar. We end this section by discussing how the universal functions can be applied to verifications of our conjectures in examples.

\subsection{Mochizuki's formula}

This section is devoted to a remarkable formula appearing in T.~Mochizuki's monograph \cite[Thm.~7.5.2]{Moc}. Let $(S,H)$ be a smooth polarized surface satisfying $b_1(S) = 0$. Consider the Gieseker-Maruyama moduli space $M:=M_S^H(\rho,c_1,c_2)$ for arbitrary $\rho>1$. We assume $M$ does not contain strictly Gieseker $H$-semistable sheaves. For the moment, we also assume $S \times M$ has a universal sheaf $\EE$ ---an assumption we get rid of in Remark \ref{univdrop2}.

For any $\alpha \in H^*(S,\Q)$ and $k \geq 2$, we consider the slant product
$$
\ch_k(\EE) / \mathrm{PD}(\alpha) \in H^*(M,\Q),
$$
where $\mathrm{PD}(\alpha)$ denotes the Poincar\'e dual of $\alpha$. For any polynomial expression $P(\EE)$ in slant products, we refer to the virtual intersection number
$$
\int_{[M]^{\vir}} P(\EE) \in \Q
$$
as a descendent Donaldson invariant of $S$. Similar to Donaldson-Thomas theory, the word ``descendent'' refers to the fact that we allow $k>2$. Mochizuki's formula reduces \emph{any} descendent Donaldson invariant to an expression involving Seiberg-Witten invariants and intersection numbers on products of Hilbert schemes of points. We introduce the required notation.

For any non-negative integers $\bsn = (n_1, \ldots, n_\rho)$, we define
$$
S^{[\bsn]} := S^{[n_1]} \times \cdots \times S^{[n_\rho]}.
$$
For a tautological vector bundle $L^{[n_i]}$ on $S^{[n_i]}$, we denote its pull-back to $S^{[\bsn]}$ by the same symbol. Let $\I_i$ be the universal ideal sheaf on $S \times S^{[n_i]}$, then we denote its pull-back to $S \times S^{[\bsn]}$ by the same symbol too. We denote its twist by the pull-back of a divisor class $a_i \in A^1(S)$ by $\I_i(a_i)$.

We endow $S^{[\bsn]}$ with the trivial action of $\mathbb{T} = (\C^{*})^{\rho-1}$. Let 
$$
\t_1, \ldots, \t_{\rho-1} \in X(\mathbb{T}) \cong \Z^{\rho-1}
$$
be the standard degree one characters of $\mathbb{T}$. Then any character of $\mathbb{T}$ is of the form $\prod_i \t_i^{w_i}$ for some $w_1, \dots, w_{\rho-1} \in \Z$. Any $\mathbb{T}$-equivariant coherent sheaf $\F$ on $S^{[\bsn]}$ decomposes into eigensheaves
$$
\F = \bigoplus_{\boldsymbol{w} = (w_1, \ldots, w_{\rho-1}) \in \Z^{\rho-1}} \F_{\boldsymbol{w}} \otimes \prod_i \t_i^{w_i}.
$$
We also endow $S \times S^{[\bsn]}$ with the trivial $\mathbb{T}$-action, then projection $\pi : S \times S^{[\bsn]} \rightarrow S^{[\bsn]}$ is obviously a $\mathbb{T}$-equivariant morphism. Moreover, we write
$$
H_{\mathbb{T}}^*(\mathrm{pt},\Z) = \Z[t_1^{\pm 1}, \ldots, t_{\rho-1}^{\pm 1}],
$$
where $t_i := c_1^{\mathbb{T}}(\t_i)$ denotes the $\mathbb{T}$-equivariant first Chern class. The following (rational) characters in $X(\mathbb{T}) \otimes_{\Z} \Q$ play an important role in Mochizuki's formula
\begin{align}
\begin{split} \label{defT}
\frakT_i &:= \t_i^{-1} \prod_{j<i} \t_j^{\frac{1}{\rho-j}}, \quad \forall i=1, \ldots, \rho-1, \quad \frakT_\rho := \prod_{j<\rho} \t_j^{\frac{1}{\rho-j}}, \\
T_i &:= c_1^{\mathbb{T}}(\frak{T}_i), \quad \forall i=1, \ldots, \rho.
\end{split}
\end{align}

For any Chern character $\ch \in H^*(S,\Q)$ on $S$, we 
define
$$
\chi(\ch) := \int_S \ch \cdot \td(S).
$$
For any Chern character $\ch = (\rho,c_1,\frac{1}{2} c_1^2-c_2)$, we denote the corresponding Hilbert polynomial by $h_{\ch}(t) = \chi(\ch \cdot e^{tH})$. For any divisor class $c \in A^1(S)$, we set $\chi(c) := \chi(e^c)$.

Let $P(\E)$ be any polynomial expression in slant products such that $$P(\E) = P(\E \otimes \mathcal{L})$$ for any $\mathcal{L} \in \Pic(S \times M)$. Then $P(\E)$ is independent of the choice of universal sheaf. For a $\mathbb{T}$-equivariant coherent sheaf $\F$ on $S \times S^{[\bsn]}$, we denote by $P(\F)$ the expression obtained from $P(\E)$ by replacing $S \times M$ by $S \times S^{[\bsn]}$, $\E$ by $\F$, and all Chern classes by $\mathbb{T}$-equivariant Chern classes. For any divisor classes $\bsa = (a_1, \ldots, a_{\rho})$, we define
\begin{align*}
&Q\Big( \I_1(a_1) \otimes \frak{T}_1, \ldots, \I_\rho(a_{\rho}) \otimes \frak{T}_{\rho} \Big) := \\
&\prod_{i<j} e\Big( - R\pi_{*} R\hom (\I_i(a_i) \otimes \frakT_i , \I_j(a_j) \otimes \frakT_j) - R\pi_{*} R\hom(\I_j(a_j) \otimes \frakT_j , \I_i(a_i) \otimes \frakT_i) \Big),
\end{align*}
where $e(\cdot)$ denotes $\mathbb{T}$-equivariant Euler class and $\pi : S \times S^{[\bsn]} \rightarrow S^{[\bsn]}$ is projection. Using Mochizuki's notation \cite[Sect.~7.5.2]{Moc}, for any non-negative integers $\bsn = (n_1, \ldots, n_\rho)$ and any divisor classes $\bsa = (a_1, \ldots, a_{\rho})$ on $S$, we define
\begin{align*}
&\widetilde{\Psi}(\bsa,\bsn,{\boldsymbol{t}}) := \Bigg( \prod_{i=1}^{\rho-1} t_i^{-1+\sum_{j \geq i} \chi(1,a_j,\frac{1}{2}a_j^2 - n_j)} \Bigg) \Bigg( \prod_{i<j} \frac{1}{(T_j - T_i)^{\chi(a_j)}} \Bigg) \\
&\cdot \frac{P\Big( \bigoplus_{i=1}^{\rho} \I_i(a_i) \otimes \frak{T}_i \Big)}{Q\Big( \I_1(a_1) \otimes \frak{T}_1, \ldots, \I_\rho(a_\rho) \otimes \frak{T}_\rho \Big)} \Bigg( \prod_{i=1}^{\rho-1} e(\O(a_i)^{[n_i]}) \Bigg) \Bigg( \prod_{i<j} e(\O(a_j)^{[n_j]} \otimes \frakT_j \frakT_i^{-1}) \Bigg).
\end{align*}
Finally, we define
\begin{align*}
\Psi(\bsa,\bsn) := \Res_{t_{1}} \cdots \Res_{t_{\rho-1}} \widetilde{\Psi}(\bsa,\bsn,{\boldsymbol{t}}),
\end{align*}
where $\Res_{t_i}(\cdot)$ takes the residue of $(\cdot)$ in the variable $t_i$ at zero, i.e.~the coefficient of $t_{i}^{-1}$ after expanding $(\cdot)$ as a Laurent series in $t_i$.

\begin{theorem}[Mochizuki] \label{mocthm}
Let $(S,H)$ be a smooth polarized surface such that $b_1(S) = 0$ and $p_g(S) > 0$. Consider the Gieseker-Maruyama moduli space $M:=M_S^H(\rho,c_1,c_2)$ for some $\rho>0$. Assume the following:
\begin{enumerate}
\item $M$ does not contain strictly semistable sheaves,
\item there exists a universal sheaf $\E$ on $S \times M$ ,
\item $h_{(\rho, c_1, \frac{1}{2} c_1^2-c_2)} / \rho > h_{e^{K_S}}$,
\item $\chi(\rho, c_1, \frac{1}{2} c_1^2-c_2) > (\rho-2) \chi(\O_S)$.
\end{enumerate}
Let $P(\EE)$ be any polynomial expression in slant products such that $P(\E) = P(\E \otimes \mathcal{L})$ for all $\mathcal{L} \in \Pic(S \times M)$. Then
\begin{align*}
\int_{[M]^{\vir}} P(\EE) = (-1)^{\rho-1} \rho \sum_{(a_1, \ldots, a_{\rho}) \atop (n_1, \cdots, n_\rho)} \prod_{i=1}^{\rho-1} \SW(a_i) \int_{S^{[\boldsymbol{n}]}} \Psi(\boldsymbol{a}, \boldsymbol{n}),
\end{align*}
where the sum is over all $(a_1, \ldots, a_{\rho}) \in H^2(S,\Z)^\rho$ and $(n_1, \ldots, n_\rho) \in \Z_{\geq 0}^\rho$ satisfying
\begin{align*}
c_1 &= a_1 + \cdots + a_{\rho}, \\
c_2 &= n_1 + \cdots + n_\rho + \sum_{i<j} a_i a_j, \\
h_{(1,a_i,\frac{1}{2}a_i^2 - n_i)} &< \frac{1}{\rho - i} \sum_{j>i} h_{(1,a_j,\frac{1}{2}a_j^2 - n_j)} \quad \forall i=1, \ldots, \rho-1.
\end{align*}
\end{theorem}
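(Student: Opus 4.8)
The plan is to prove the formula by Mochizuki's wall-crossing method for pairs, reducing the rank $\rho$ descendent invariant on $M$ to contributions indexed by splittings of $c_1$, and then inducting on the rank until only rank-one pieces survive. First I would enhance the moduli problem: fix a sufficiently positive line bundle $N$ on $S$ and consider the moduli space of pairs $(E,s)$ with $s \in H^0(E \otimes N)$, equipped with a real stability parameter $\alpha$. Since $N$ is very ample, $H^i(E \otimes N)=0$ for $i>0$ and all $[E]\in M$, so in the ``geometric'' chamber of large $\alpha$ the pair space is a projective bundle $\PP(\pi_{M*}(\EE \otimes N))$ over $M$ with an induced perfect obstruction theory; integrating the slant-product insertion $P(\EE)$ over the fibres recovers $\int_{[M]^{\vir}} P(\EE)$ up to an explicit factor. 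The point is that the pair invariants are defined for every $\alpha$, while as $\alpha$ crosses to the opposite extreme the pair invariant vanishes (or becomes elementary), so the geometric invariant equals the total jump across the finitely many walls.

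Next I would analyse each wall by the master-space technique. Across a wall a pair $(E,s)$ is destabilised exactly when $E$ acquires a destabilising subsheaf, forcing a short exact sequence $0 \to E' \to E \to E'' \to 0$ with $E',E''$ of smaller rank. I would construct a master space $\mathcal{W}$ carrying a $\C^*$-action and a $\C^*$-equivariant perfect obstruction theory interpolating the two adjacent chambers, and apply the Graber--Pandharipande virtual localisation formula. The genuinely new fixed loci are the ``mixed'' ones, which are products of the moduli of $E'$ and $E''$; the equivariant Euler class of the virtual normal bundle is precisely the $\Ext$-pairing between the two factors, i.e.\ the $\C^*$-equivariant Euler class of $-R\pi_* R\hom(E',E'') - R\pi_* R\hom(E'',E')$, which is the source of the factor $Q(\cdots)$. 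Iterating this wall-crossing on the lower-rank factors is an induction on $\rho$ that eventually splits $E$ into $\rho$ rank-one torsion-free pieces $\I_i(a_i)$ with $c_1 = \sum_i a_i$; this is where the outer sum over $(a_1,\dots,a_\rho)$ and the inequalities on Hilbert polynomials originate, the latter recording in which chamber each splitting contributes.

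The rank-one pieces then supply the geometric input. Because $b_1(S)=0$, a rank-one torsion-free sheaf with $c_1=a_i$ and colength $n_i$ is an ideal-sheaf twist $\I_{Z_i}(a_i)$ with $Z_i \in S^{[n_i]}$, so the product of the factors yields $S^{[\bsn]}$, the tautological and $\frakT_i$-twisted insertions, and the localised integrand $\widetilde{\Psi}(\bsa,\bsn,\boldsymbol t)$. The Seiberg--Witten invariants enter as the virtual counts attached to these rank-one factors: when $p_g(S)>0$, a holomorphic two-form furnishes a cosection of the obstruction sheaf of a rank-one object, so its reduced virtual contribution concentrates on the classes $a_i$ with $a_i(a_i-K_S)=0$ and equals $\SW(a_i)$ in Mochizuki's normalisation; only $\rho-1$ such weights appear because the final class is determined by $c_1$. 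Finally I would carry out the equivariant bookkeeping: the residues $\Res_{t_1}\cdots\Res_{t_{\rho-1}}$ implement the non-equivariant limit of the localisation formula by extracting the coefficient of $\prod_i t_i^{-1}$, and collecting the overall sign and multiplicity produces the prefactor $(-1)^{\rho-1}\rho$.

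The main obstacle will be the construction and control of the master space $\mathcal{W}$: one must equip it with a genuine $\C^*$-equivariant perfect obstruction theory compatible with those on the two chambers, identify all $\C^*$-fixed loci precisely, and verify that the mixed loci are exactly the advertised products with the stated virtual normal bundle. Hypotheses (3) and (4) are what make this succeed, since they guarantee the vanishing of the relevant $\Hom$ and $\Ext^2$ groups, equivalently that destabilising subsheaves are themselves semistable with controlled Hilbert polynomial, so that the chain of wall-crossings is finite and terminates in rank-one pieces rather than in more complicated strata. The remaining steps---the projective-bundle comparison in the geometric chamber, the $\Ext$-pairing computation behind $Q$, and the residue extraction---are technical but amount to bookkeeping once the master space is in place.
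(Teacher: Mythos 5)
This theorem is not proved in the paper at all: it is quoted verbatim from Mochizuki's monograph \cite[Thm.~7.5.2]{Moc}, so there is no in-paper proof to compare against. Your outline is, at the level of strategy, a faithful summary of Mochizuki's actual argument --- enhancement to (oriented, reduced) Bradlow-type pairs $(E,s)$ with a varying stability parameter, master spaces with a $\C^*$-action and Graber--Pandharipande localization at each wall, iterated reduction to rank-one pieces $\I_{Z_i}(a_i)$ living on $S^{[\bsn]}$, the $\Ext$-pairing producing the factor $Q$, Seiberg--Witten invariants emerging from the abelian contributions, and the residues implementing the non-equivariant limit. So the approach is correct; but be aware it is an outline of an argument that fills a monograph, and two of your attributions are imprecise. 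First, the Seiberg--Witten factors $\SW(a_i)$ do not arise from a Kiem--Li-style cosection on the rank-one moduli; in Mochizuki's setup they appear as virtual intersection numbers on the Hilbert schemes of divisors (the linear systems $|a_i|$) that multiply the $S^{[n_i]}$-integrals, with $p_g(S)>0$ forcing these to vanish unless $a_i(a_i-K_S)=0$ --- the mechanism is the holomorphic two-form, but the technical implementation predates and differs from cosection localization. Second, your reading of hypotheses (3) and (4) as ``vanishing of the relevant $\Hom$ and $\Ext^2$'' conflates two different roles: condition (3) controls the wall-and-chamber combinatorics and is conjecturally removable (Remark \ref{strongmoc}), whereas condition (4) is what forces the invariant to vanish at the far end of the chain of wall-crossings, so that the geometric invariant equals the sum of transition terms; the paper stresses that (4) is essential and cannot be dropped, which your sketch does not explain.
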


\begin{remark} \label{univdrop2}
Mochizuki derives his formula for the Deligne-Mumford stack $\mathcal{M}$ of oriented sheaves, i.e.~pairs $(E,\phi)$ where $[E] \in M$, $\phi : \det E \cong \O(c_1)$, and $\O(c_1)$ is a fixed line bundle with first Chern class $c_1$. Then $S \times \mathcal{M}$ always has a universal sheaf $\mathcal{E}$. When $\mathcal{M}$ does not contain strictly Gieseker semistable sheaves, this can be used to define descendent Donaldson invariants $\int_{[\mathcal{M}]^{\vir}} P(\mathcal{E})$ for any polynomial in slant products. Mochizuki's formula for $\int_{[\mathcal{M}]^{\vir}} P(\mathcal{E})$ only differs from the above formula by a factor $\rho$. 

Furthermore, there exists a degree $\frac{1}{\rho} : 1$ \'etale morphism $\mathcal{M} \rightarrow M$, which can be used to derive Mochizuki's formula for $\int_{[M]^{\vir}} P(\EE)$ as stated above (essentially by push-forward). Since we require $P(\E)$ to be invariant upon replacing $\E$ by $\E \otimes \mathcal{L}$, it follows that $P(\E)$ is defined without assuming the existence of a universal sheaf $\E$ on $S \times M$, so Condition (2) can be dropped from Theorem \ref{mocthm}. Finally, Mochizuki also extends his formula to the case $\mathcal{M}$ has strictly semistable sheaves, but we will not discuss this.
\end{remark}

\begin{remark} \label{strongmoc}
Conjecturally,  Condition (3) can be dropped from Theorem \ref{mocthm} and the sum in the formula can be replaced by the sum over \emph{all} $(a_1, \ldots, a_\rho) \in H^2(S,\Z)^{\rho}$ and $(n_1, \ldots, n_\rho) \in \Z_{\geq 0}^{\rho}$, i.e.~without imposing the inequalities (see also \cite{GNY3, GK1}). Condition (4) is essential and cannot be dropped.
\end{remark}

\subsection{Universal function}

We now derive a universal function that determines the virtual Euler characteristics of all rank 2 Gieseker-Maruyama moduli spaces on any smooth polarized surface $(S,H)$ satisfying $b_1(S) = 0$ and $p_g(S)>0$. For each of the virtual invariants in this survey, we have a similar universal function derived by a similar proof \cite{GK1, GK2, GK3, GK4, GKW}.
\begin{theorem} \cite{GK1} \label{thm:univ}
There exist  
$
A_1(t,q), \ldots, A_7(t,q) \in 1 + q \, \Q(t)[[q]]
$
with the following property. 
Let $(S,H)$ be any smooth polarized surface such that $b_1(S) = 0$ and $p_g(S) > 0$. Consider $M:=M_S^H(2,c_1,c_2)$ and assume the following:
\begin{enumerate}
\item[(a)] $M$ does not contain strictly Gieseker $H$-semistable sheaves,
\item[(b)] $h_{(2, c_1, \frac{1}{2} c_1^2-c_2)} / 2 > h_{e^{K_S}}$,
\item[(c)] $\chi(2, c_1, \frac{1}{2} c_1^2-c_2) > 0$,
\item[(d)] for any $a_1, a_2 \in H^2(S,\Z)$ such that $a_1$ is a Seiberg-Witten basic class, $a_1 + a_2=c_1$, and $a_1 H \leq a_2 H$, the inequality is strict.
\end{enumerate}
Then $e^{\vir}(M)$ equals $\Res_{t}$ of the coefficient of $x^{\vd(M)}$ of the following expression
\begin{align*}
&-2  \!\!\!\!\! \sum_{(a_1,a_2) \in H^2(S,\Z)^{2} \atop a_1+a_2 = c_1 \, \mathrm{and} \, a_1 H \leq a_2 H} \!\! \SW(a_1) \, 2^{ - \chi(a_2)} \, t^{\chi(\O_S)-1}  \Big( \frac{2t}{1+2t} \Big)^{\chi(a_2-a_1)} \Big( \frac{-2t}{1-2t} \Big)^{\chi(a_1-a_2)} x^{-(a_1-a_2)^2-3 \chi(\O_S)} \\
&\cdot A_1(t,x^4)^{a_1^2} \, A_2(t,x^4)^{a_1 a_2} \, A_3(t,x^4)^{a_2^2} \, A_4(t,x^4)^{a_1 K_S} \, A_5(t,x^4)^{a_2 K_S} \, A_6(t,x^4)^{K_S^2} \, A_7(t,x^4)^{\chi(\O_S)}.
\end{align*}
\end{theorem}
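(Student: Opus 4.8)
The plan is to combine Mochizuki's formula (Theorem~\ref{mocthm}) for $\rho=2$ with a universality result for products of two Hilbert schemes of points, in the spirit of \cite[Thm.~4.1]{EGL}.

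First I would realise $e^{\vir}(M)$ as a descendent Donaldson invariant. By \eqref{Tvir} the virtual tangent bundle $T_M^{\vir}=R\pi_*R\hom(\EE,\EE)_0[1]$ depends only on the trace-free complex and is therefore unchanged under $\EE\mapsto\EE\otimes\mathcal L$; Grothendieck--Riemann--Roch then expresses $\ch(T_M^{\vir})$, hence the top Chern class $c_{\vd}(T_M^{\vir})$, as a polynomial $P(\EE)$ in slant products $\ch_k(\EE)/\mathrm{PD}(\alpha)$ with $P(\EE)=P(\EE\otimes\mathcal L)$. The variable $x$ serves as a degree marker: substituting $c_1=a_1+a_2$ and $c_2=n_1+n_2+a_1a_2$ into \eqref{vd} gives $\vd=4(n_1+n_2)-(a_1-a_2)^2-3\chi(\O_S)$, so each pair of Hilbert schemes contributes $x^{4(n_1+n_2)}$ (i.e.\ $q=x^4$) and the coefficient of $x^{\vd(M)}$ of a single generating series over all $c_2$ isolates the invariant attached to the given $M$.

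Next I would apply Theorem~\ref{mocthm}. For $\rho=2$ one has $\mathbb T=\C^*$ with a single variable $t:=t_1$, $\frakT_1=\t_1^{-1}$, $\frakT_2=\t_1$, so $T_2-T_1=2t$ and $\Psi$ carries a single residue $\Res_t$. Mochizuki's formula writes $e^{\vir}(M)=-2\sum\SW(a_1)\int_{S^{[n_1]}\times S^{[n_2]}}\Psi(\bsa,\bsn)$, the sum running over $(a_1,a_2)$ with $a_1+a_2=c_1$ and over $(n_1,n_2)\in\Z_{\ge0}^2$ subject to the Hilbert-polynomial inequality of that theorem. I would show this inequality is vacuous under (a)--(d): since $\SW(a_1)\ne0$ forces $a_1$ to be a Seiberg--Witten basic class, condition~(d) upgrades $a_1H\le a_2H$ to a strict inequality, and the polynomials $h_{(1,a_i,\frac12a_i^2-n_i)}$ have equal leading coefficient $\tfrac12 H^2$ while their sub-leading coefficients differ by $a_1H-a_2H$ (the $n_i$ affect only the constant term), so $h_{(1,a_1,\ldots)}<h_{(1,a_2,\ldots)}$ for every $n_1,n_2\ge0$. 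Dropping the inequality lets the sum range over all $n_1,n_2\ge0$, which is exactly what makes the generating series factor.

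Finally I would invoke universality. The integrand $\Psi(\bsa,\bsn)$ is assembled from Chern classes of the tautological bundles $\O(a_i)^{[n_i]}$ and of the tangent bundles $T_{S^{[n_i]}}$ (the latter entering through $P$ and through the $Q$-factor, once its $R\hom$ Euler classes are rewritten by Grothendieck--Riemann--Roch), together with the equivariant parameter $t$. A two-factor, $t$-equivariant generalisation of \cite[Thm.~4.1]{EGL}, together with the multiplicativity underlying the Ellingsrud--G\"ottsche--Lehn bootstrap, shows that $\sum_{n_1,n_2\ge0}q^{n_1+n_2}\int_{S^{[n_1]}\times S^{[n_2]}}\Psi$ is universal and factors as a product of series in $1+q\,\Q(t)[[q]]$ raised to the seven independent intersection numbers $a_1^2,a_1a_2,a_2^2,a_1K_S,a_2K_S,K_S^2,\chi(\O_S)$; these are $A_1(t,q),\dots,A_7(t,q)$, evaluated at $q=x^4$. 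The remaining non-tautological factors --- the power of $t$ from $\prod_i t_i^{\,\cdots}$, the factor $(2t)^{-\chi(a_2)}$ from $(T_2-T_1)^{-\chi(a_2)}$, and the rational-in-$t$ contributions $(2t/(1+2t))^{\chi(a_2-a_1)}$, $(-2t/(1-2t))^{\chi(a_1-a_2)}$ and $x^{-(a_1-a_2)^2-3\chi(\O_S)}$ --- are computed directly using $\chi(c)=\tfrac12 c(c-K_S)+\chi(\O_S)$; taking $\Res_t$ and summing against $\SW(a_1)$ with the overall factor $-2$ produces the asserted formula. The main obstacle is the equivariant two-factor universality in this last step: one must check that $\Psi$ genuinely lies in the tautological algebra to which the \cite{EGL} machinery applies, i.e.\ that every $R\pi_*R\hom$ contribution is expressible through Chern classes of $\O(a_i)^{[n_i]}$ and $T_{S^{[n_i]}}$, and then upgrade the single-surface argument to a product of two Hilbert schemes carrying independent twists $\O(a_1),\O(a_2)$ and the variable $t$, verifying that the logarithm of the generating series stays linear in the basic invariants so that exactly seven universal series occur. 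Given these structural facts, matching the explicit $t$-rational prefactors and fixing the residue normalisation is a finite, if lengthy, computation.
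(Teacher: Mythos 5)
Your proposal follows essentially the same route as the paper's proof: reduce $e^{\vir}(M)=\int_{[M]^{\vir}}c(T_M^{\vir})$ to descendent Donaldson invariants via Grothendieck--Riemann--Roch and the K\"unneth decomposition of the diagonal, apply Mochizuki's formula for $\rho=2$ (with $T_1=-t$, $T_2=t$), peel off the $n_1=n_2=0$ leading term to get the explicit $t$-rational prefactors, and then use multiplicativity under disjoint union together with the \cite{EGL}-type universality (extended to products of Hilbert schemes as in \cite[Sect.~5]{GNY1}) and the seven-basis-triples cobordism argument to produce $A_1,\dots,A_7$. Your additional observation that condition (d) makes Mochizuki's Hilbert-polynomial inequality independent of $(n_1,n_2)$ --- since the leading coefficients agree and the subleading ones differ by $a_1H-a_2H$ --- is correct and is exactly the point that lets the sum over all $n_1,n_2\geq 0$ factor into a generating series.
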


\begin{proof}
\noindent \textbf{Reduction to Donaldson invariants.} We first express virtual Euler characteristics in terms of descendent Donaldson invariants. We denote projections to the factors of $S \times M$ by $\pi_S$ and $\pi_M$ respectively. Recall the virtual Poinca\'re-Hopf formula 
$$
e^{\vir}(M) = \int_{[M]^{\vir}} c(T_M^{\vir}),
$$
where $c$ is total Chern class and $T_M^{\vir} = R\pi_{M*} R\hom(\E,\E)_0[1]$ (see \eqref{Tvir}). By Grothendieck-Riemann-Roch, we can express $c(T_M^{\vir})$ as a polynomial in expressions of the following form
$$
\pi_{M*} \big( \pi_S^* \alpha \cdot \ch_a(\EE) \cdot \ch_b(\EE) \big), 
$$
where $\alpha$ is a component of $\td(S)$. Next, we write each such expression as a polynomial in slant products. Denote by $\pi_{ij}$ and $\pi_i$ the projections from $S \times S \times M$ to factors $(i,j)$ and $i$ respectively. Then 
\begin{equation} \label{MxSxS}
\pi_{M*} \big( \pi_S^* \alpha \cdot \ch_a(\EE) \cdot \ch_b(\EE) \big)  = \pi_{3*} \big( \pi_1^* \alpha \cdot \pi_{12}^{*} \Delta \cdot \pi_{23}^{*} \ch_a(\E) \cdot \pi_{13}^{*} \ch_b(\E)\big),
\end{equation}
where $\Delta \in H^4(S \times S,\Q)$ is (Poincar\'e dual to) the class of the diagonal. Consider the K\"unneth decomposition 
$$
\Delta = \sum_{i+j=4} \theta_{1}^{(i)} \boxtimes \theta_{2}^{(j)},
$$
where $\theta_{1}^{(i)} \in H^i(S,\Q)$ and $\theta_{2}^{(j)} \in H^j(S,\Q)$. Substituting into \eqref{MxSxS} and using the projection formula yields
\begin{equation*} 
\pi_{M*} \big( \pi_{S}^{*} \alpha \cdot \ch_a(\E) \cdot \ch_b(\E) \big) = \sum_{i+j=4} ( \ch_a(\E) / \alpha \theta_{1}^{(i)}) \cdot ( \ch_b(\E) / \theta_{2}^{(j)}). 
\end{equation*}

\noindent \textbf{Leading term.} By Theorem \ref{mocthm}, we can express $e^{\vir}(M)$ as $\mathrm{Res}_t$ of
$$
-2 \sum_{(a_1,a_2) \in H^2(S,\Z)^{2} \atop a_1+a_2 = c_1 \, \textrm{and} \, a_1 H \leq a_2 H}  \sum_{n_1 + n_2 =c_2 - a_1a_2} \SW(a_1) \int_{S^{[n_1]} \times S^{[n_2]}} \widetilde{\Psi}(a_1,a_2,n_1,n_2,t).
$$
We isolate the part involving intersection numbers on Hilbert schemes and put them into a separate generating function as follows
$$
\sum_{n_1,n_2 \geq 0} q^{n_1+n_2} \int_{S^{[n_1]} \times S^{[n_2]}} \widetilde{\Psi}(a_1,a_2,n_1,n_2,t).
$$
We define its constant term by 
$$
\mathsf{C}(a_1,a_2,t) := \widetilde{\Psi}(a_1,a_2,0,0,t),
$$ 
i.e.~the term corresponding to $n_1=n_2=0$. Defining $t:=t_1$, we obtain 
\begin{align*}
&\mathsf{C}(a_1,a_2,t) = t^{-1 + 2 \chi(\O_S)+ \frac{1}{2} a_1(a_1-K_S) + \frac{1}{2} a_2(a_2-K_S) }  (T_2 - T_1)^{\chi(a_2-a_1) - \chi(a_2)} (T_1-T_2)^{\chi(a_1-a_2)}  \\
&\cdot c^{\mathbb{T}}(R\Gamma(S,\O_S)-R\Hom_S(\O_S(a_1) \otimes \mathfrak{T}_1 \oplus \O_S(a_2) \otimes \mathfrak{T}_2, \O_S(a_1) \otimes \mathfrak{T}_1 \oplus \O_S(a_2) \otimes \mathfrak{T}_2)).
\end{align*}
By \eqref{defT}, we have $T_1 = -t$ and $T_2=t$, hence
\begin{align*}
\mathsf{C}(a_1,a_2,t) = t^{-1 + 2 \chi(\O_S)+ \frac{1}{2} a_1(a_1-K_S) + \frac{1}{2} a_2(a_2-K_S) }  (2t)^{ - \chi(a_2)} \Big( \frac{2t}{1+2t} \Big)^{\chi(a_2-a_1)} \Big( \frac{-2t}{1-2t} \Big)^{\chi(a_1-a_2)}.
\end{align*}
Furthermore, when $a_1$ is a Seiberg-Witten basic class, we have $a_1^2 = a_1 K_S$. \\

\noindent \textbf{Multiplicativity.} Let $S$ be any \emph{possibly disconnected} smooth projective surface and let $a_1,a_2 \in A^1(S)$ be \emph{arbitrary} divisor classes on $S$. Define the generating function
\begin{equation}\label{defZ}
\mathsf{Z}_S(a_1,a_2,t,q) := \frac{1}{\mathsf{C}(a_1,a_2,t)} \sum_{n_1,n_2 \geq 0} q^{n_1+n_2} \int_{S^{[n_1]} \times S^{[n_2]}} \widetilde{\Psi}(a_1,a_2,n_1,n_2,t).
\end{equation}
We claim that for any $(S',a_1',a_2')$ and $(S'',a_1'',a_2'')$, we have 
\begin{align}
\begin{split} \label{mult}
&\mathsf{Z}_{S' \sqcup S''}(a_1' \sqcup a_1'', a_2' \sqcup a_2'', t, q) =  \mathsf{Z}_{S'}(a_1',a_2',t,q) \, \mathsf{Z}_{S''}(a_1'',a_2'',t,q).
\end{split}
\end{align}
This follows from the decompositions 
\begin{align*}
&(S' \sqcup S'')^{[n_1]} \times (S' \sqcup S'')^{[n_2]} = \bigsqcup_{n_{11}+n_{12}=n_1} \bigsqcup_{n_{21}+n_{22} = n_2} S^{\prime [n_{11}]} \times S^{\prime [n_{21}]} \times S^{\prime\prime [n_{12}]} \times S^{\prime\prime [n_{22}]}, \\
&\bigoplus_{i=1}^{2} \I_i(a_i) \otimes \frakT_i  \Big|_{S^{\prime[n_{11}]} \times S^{\prime[n_{21}]} \times S^{\prime\prime [n_{12}]} \times S^{\prime\prime [n_{22}]}} = \bigoplus_{i=1}^{2}  \I'_i(a'_i) \otimes \frakT_i \oplus  \bigoplus_{i=1}^{2} \I''_i(a''_i) \otimes \frakT_i,
\end{align*}
where we suppress various pull-backs, combined with the identity $c(V + W) = c(V) c(W)$ for the total Chern class. \\

\noindent \textbf{Universality.} By the general universality property \cite[Thm.~4.1]{EGL}, there exists a universal function\footnote{More precisely, \cite[Thm.~4.1]{EGL} only deals with intersection numbers on a single Hilbert scheme. The extension to intersection numbers on products of Hilbert schemes was established in \cite[Sect.~5]{GNY1}.}
$$
G(x_1, \ldots, x_7,t,q) \in \Q[x_1,\ldots, x_7](t)[[q]],
$$
such that for any $(S,a_1,a_2)$ we have
\begin{equation} \label{expG}
\mathsf{Z}_S(a_1,a_2,t,q) = \exp G(a_1^2,a_1a_2,a_2^2,a_1K_S,a_2K_S,K_S^2,\chi(\O_S),t,q).
\end{equation}
Exponentiation is possible because $\mathsf{Z}_S(a_1,a_2,t,q)$ starts with a 1 (due to normalization by $\mathsf{C}(a_1,a_2,t)$). 

We now combine \eqref{mult} and \eqref{expG} in order to construct the universal functions $A_i(t,q)$. This follows from a (by now) standard cobordism argument used in several different settings in modern enumerative geometry (notably \cite{Got3, GNY1}). More precisely, we choose seven triples $(S^{(i)},a_1^{(i)},a_2^{(i)})$ such that the vectors 
$$
w_i := ((a_1^{(i)})^2,a_1^{(i)}a_2^{(i)},(a_2^{(i)})^2,a_1^{(i)}K_{S^{(i)}},a_2^{(i)}K_{S^{(i)}},K_{S^{(i)}}^2,\chi(\O_{S^{(i)}})) \in \Q^{7}
$$
form a $\Q$-basis. Now consider an arbitrary triple $(S,a_1,a_2)$. Then we can decompose $w = (a_1^2, \ldots, \chi(\O_S))$ as
$w = \sum_i n_i w_i$ for some $n_i \in \Q$.
If all $n_i \in \Z_{\geq 0}$, then \eqref{mult} implies 
\begin{equation} \label{intermed}
\sfZ_S(a_1,a_2,t,q) = \prod_{i=1}^{7} \big(\exp G(w_i,t,q) \big)^{n_i} = \exp \Big( \sum_{i=1}^{7} n_i G(w_i,t,q) \Big).
\end{equation}
Denote by $W$ the matrix with column vectors $w_1, \ldots, w_{7}$ and let $M = (m_{ij})$ be its inverse. We define 
$$
A_j(t,q) := \exp\Big(\sum_i m_{ij} G(w_i,t,q)\Big), \quad \forall j=1, \ldots, 7.
$$ 
Then \eqref{intermed} finally yields
\begin{align} 
\begin{split} \label{ZA}
&\sfZ_S(a_1,a_2,t,q) = \\
&A_1(t,q)^{a_1^2} \, A_2(t,q)^{a_1 a_2} \, A_3(t,q)^{a_2^2} \, A_4(t,q)^{a_1 K_S} \, A_5(t,q)^{a_2 K_S} \, A_6(t,q)^{K_S^2} \, A_7(t,q)^{\chi(\O_S)}.
\end{split}
\end{align}
Since the points $w = \sum_i n_i w_i$, with $n_i \in \Z_{\geq 0}$, lie Zariski dense in $\Q^{7}$, we deduce that \eqref{ZA} holds for \emph{all} triples $(S,a_1,a_2)$.
\end{proof}

\begin{remark}
Consider Conditions (a)--(d) of Theorem \ref{thm:univ}. By Remark \ref{strongmoc}, Conditions (b) and (d) can be conjecturally dropped and the sum over ``$a_1+a_2=c_1$ satisfying $a_1H \leq a_2H$'' can be replaced by the sum over all ``$a_1+a_2=c_1$''. Some of the verifications of the conjectures mentioned in this survey are unconditional, whereas others assume that Conditions (b) and (d) can be dropped and we can sum over all ``$a_1+a_2 = c_1$''. See \cite[Sect.~7]{GK1} for details.

Condition (c) in Theorem \ref{thm:univ} is necessary (Remark \ref{strongmoc}). Condition (c) gives an upper bound on $c_2$. However, this upper bound can be made arbitrarily large as follows. The map $- \otimes \O_S(mH)$ induces an isomorphism on Gieseker-Maruyama moduli spaces and it does not change our virtual invariants. However, the upper bound on $c_2$ coming from Condition (c) becomes arbitrarily large for $m \rightarrow \infty$. Therefore, in principle, we can apply Theorem \ref{thm:univ} for arbitrarly large values of $c_2$. See also \cite[Sect.~6.1]{GK1}.
\end{remark}

\begin{remark}
For each of the virtual invariants discussed in this survey, we have a universal function similar to the one in Theorem \ref{thm:univ}. The number of universal functions $A_i$, the expression for the leading term $\mathsf{C}$, and the expression for $\sfZ_{S}$ are of course different in each situation. Nonetheless, the strategy is always the same as in the proof of Theorem \ref{thm:univ}. In particular, in the first step we reduce the virtual invariant to an expression in terms of descendent Donaldson invariants. In the case of virtual $\chi_y$-genera, elliptic genera, and Verlinde numbers this requires the virtual Hirzebruch-Riemann-Roch theorem \cite{CFK, FG}. In the case of virtual cobordism classes, this requires a theorem of Shen \cite{She} stating that $\pi_*[M]_{\Omega}^{\vir}$ can be expressed in terms of the collection of virtual Chern numbers of $M$, cf.~\eqref{vircobChern}.
\end{remark}

\subsection{Toric calculations} \label{sec:toric}

The proof of Theorem \ref{thm:univ} expresses the universal functions $A_i(t,q)$ explicitly in terms of intersection numbers on Hilbert schemes of points. We now show how this provides an algorithm for calculating $A_i(t,q)$ up to, in principle, any order in $q$. Once we know all universal functions $A_i(t,q)$ explicitly up to a certain order in $q$, we can apply Theorem \ref{thm:univ} to perform the verifications mentioned in Section \ref{sec:eulervir:rk2}. The same strategy was used for the verifications of the other virtual invariants in this survey.

Recall that for \emph{any} possibly disconnected smooth projective surface $S$ (not necessarily satisfying $b_1(S)=0$ or $p_g(S)>0$!) and any $a_1, a_2 \in A^1(S)$, we defined $\sfZ_S(a_1,a_2,t,q)$ by equation \eqref{defZ}. Furthermore, we showed that there exist $A_1, \ldots, A_7 \in 1 + q \, \Q(t)[[q]]$ such that for any $(S,a_1,a_2)$ we have (cf.~\eqref{ZA})
 \begin{align*} 
\sfZ_S(a_1,a_2,t,q) = A_1^{a_1^2} \, A_2^{a_1 a_2} \, A_3^{a_2^2} \, A_4^{a_1 K_S} \, A_5^{a_2 K_S} \, A_6^{K_S^2} \, A_7^{\chi(\O_S)}.
\end{align*}
Consider the following triples
 \begin{align*}
(S,a_1,a_2) = \, &(\PP^2,0,0), (\PP^2,H,0), (\PP^2,0,H), (\PP^2,H,H), \\
&(\PP^1 \times \PP^1,0,0), (\PP^1 \times \PP^1,H_1,0), (\PP^1 \times \PP^1,0,H_1),
\end{align*}
where $H \subset \PP^2$ is the class of a line and $H_1:=\{\pt\} \times \PP^1$. Then the corresponding vectors of Chern numbers form a basis of $\Q^7$ and the universal functions $A_i(t,q)$ are determined by the generating functions $\sfZ_S(a_1,a_2,t,q)$ for the above seven triples.

Note that $S = \PP^2$ and $S = \PP^1 \times \PP^1$ are toric surfaces with dense open torus $T = (\C^*)^2$. Moreover, the chosen divisors $a_1,a_2$ are $T$-invariant. The action of $T$ on $S$ lifts to an action of $T$ on $S^{[n]}$ for each $n$. Therefore, we can apply the Atiyah-Bott localization formula to the coefficients of the generating function $\sfZ_S(a_1,a_2,t,q)$. 

The calculation of intersection numbers on Hilbert schemes of points on toric surfaces is a well-studied subject, e.g.~\cite{ES} is one of the classical references. The fixed locus $(S^{[n]})^T$ consists of isolated reduced points. More precisely, we can cover $S$ by maximal $T$-invariant affine open subsets 
$$
\{U_{\sigma} \cong \Spec \C[x_{\sigma}, y_{\sigma} ]\}_{\sigma=1}^{e(S)}
$$
and the fixed locus $(S^{[n]})^T$ precisely consists of all collections of monomial ideals
$$
\{ I_\sigma \subset \C[x_{\sigma}, y_{\sigma}] \}_{\sigma=1}^{e(S)}
$$ 
of total colength $n$. In turn, monomial ideals of finite colength in $\C[x, y]$ are in bijective correspondence with partitions. Specifically, $\lambda = (\lambda_1 \geq  \cdots \geq  \lambda_\ell)$ corresponds to the ideal
$$
\big(y^{\lambda_1}, x y^{\lambda_2}, \ldots, x^{\ell-1}y^{\lambda_\ell}, x^{\ell}\big),
$$
where $\ell(\lambda) = \ell$ denotes the length of $\lambda$. Hence we can index the fixed locus $(S^{[n]})^T$ by collections of partitions 
$$
{\boldsymbol{\lambda}} = \{ \lambda^{(\sigma)} \}_{\sigma=1}^{e(S)}
$$
of total size 
$$
\sum_{\sigma=1}^{e(S)} |\lambda^{(\sigma)}| = \sum_{\sigma=1}^{e(S)} \sum_{i=1}^{\ell(\lambda^{(\sigma)})} \lambda_i^{(\sigma)} = n.
$$
We denote the closed subscheme corresponding ${\boldsymbol{\lambda}}$ by $Z_{{\boldsymbol{\lambda}}}$. It is well-known how to determine explicit expressions for 
$$
T_{S^{[n]}} |_{Z_{{\boldsymbol{\lambda}}}}, \quad L^{[n]}|_{Z_{{\boldsymbol{\lambda}}}} = H^0(L|_{Z_{{\boldsymbol{\lambda}}}}) \in K_0^{T}(\pt) = \Z[s_1^{\pm1}, s_2^{\pm1}],
$$
where $s_1, s_2$ are the equivariant parameters of $T$. In order to calculate the $K$-group classes coming from $T_M^{\vir}$, the following lemma is useful \cite[Prop.~4.1]{GK1}. 
\begin{lemma}
Let $W$ and $Z$ be 0-dimensional $T$-invariant subschemes supported on a maximal $T$-invariant affine open subset $U_\sigma$ of a smooth projective toric surface $S$. Suppose we choose coordinates such that $U_\sigma = \Spec \C[x,y]$ and the torus action is given by $(s_1,s_2) \cdot (x,y) = (s_1x,s_2y)$. Let $D$ be a $T$-invariant divisor on $S$ and denote the character corresponding to $D|_{U_\sigma}$ by $\chi(s_1,s_2)$. Then
$$
R\Hom_S(\O_W, \O_Z(D)) = \chi(s_1,s_2) \, W^* Z \frac{(1-s_1) (1- s_2)}{s_1s_2} \in K_0^T(\pt),
$$
where $W^*$ and $Z$ denote the classes of the $T$-representations of $H^0(\O_W)^*$ and $H^0(\O_Z)$.
\end{lemma}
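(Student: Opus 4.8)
The plan is to reduce the global statement to an elementary local computation on the affine chart $U_\sigma=\Spec R$ with $R=\C[x,y]$, and then to evaluate the alternating sum of $\Ext$-modules by means of a finite $T$-equivariant free resolution. Throughout I write $[V]\in\Z[s_1^{\pm1},s_2^{\pm1}]$ for the class (i.e.\ the $T$-character) of a finite-dimensional $T$-representation $V$, using the weight convention of the paper, under which $H^0(\O_{U_\sigma})=\C[x,y]$ has (formal) character $\big((1-s_1)(1-s_2)\big)^{-1}$, so that $H^0(\O_Z)$ has class $Z$ and $H^0(\O_W)^*$ has class $W^*$.

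First I would reduce to the affine setting. Since $W$ and $Z$ are $0$-dimensional and supported inside $U_\sigma$, the sheaves $\O_W$ and $\O_Z(D)$ have finite support, so the local sheaves $\ext^i_S(\O_W,\O_Z(D))$ are supported on the finite set $W\cap Z\subset U_\sigma$ and have vanishing higher cohomology. Hence the local-to-global spectral sequence degenerates and $\Ext^i_S(\O_W,\O_Z(D))=H^0(S,\ext^i_S(\O_W,\O_Z(D)))$. As these local $\ext$-sheaves are computed on the affine open $U_\sigma$, this identifies them $T$-equivariantly with $\Ext^i_R(M_W,M_Z(D))$, where $M_W:=H^0(\O_W)$ and $M_Z:=H^0(\O_Z)$ are finite-length $T$-equivariant $R$-modules. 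Taking the alternating sum, it remains to compute $\sum_i(-1)^i[\Ext^i_R(M_W,M_Z(D))]$ in $\Z[s_1^{\pm1},s_2^{\pm1}]$.

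Next I would dispose of the twist. Because $D$ is $T$-invariant, $\O_S(D)|_{U_\sigma}$ is equivariantly the trivial line bundle with linearization twisted by the character $\chi(s_1,s_2)$ of $D|_{U_\sigma}$, so $M_Z(D)\cong M_Z\otimes\chi$ and $[M_Z(D)]=\chi(s_1,s_2)\,Z$. This scalar factors out, reducing the claim to
\[
\sum_i(-1)^i[\Ext^i_R(M_W,M_Z)]=W^*\,Z\,\frac{(1-s_1)(1-s_2)}{s_1s_2}.
\]
For the core computation I would choose a finite $T$-equivariant free resolution $F_\bull\to M_W$ with $F_i=R\otimes_\C V_i$, which exists and terminates in length $\le 2$ since $R$ is regular of dimension $2$. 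Applying $\Hom_R(-,M_Z)$ and using $\Hom_R(R\otimes V_i,M_Z)=V_i^*\otimes M_Z$, the Euler characteristic over the resolution gives $\sum_i(-1)^i[\Ext^i_R(M_W,M_Z)]=\big(\sum_i(-1)^i[V_i^*]\big)\,Z$. To evaluate $\sum_i(-1)^i[V_i]$ I compare characters in the completed representation ring: the resolution yields $[M_W]=\big((1-s_1)(1-s_2)\big)^{-1}\sum_i(-1)^i[V_i]$, hence $\sum_i(-1)^i[V_i]=[M_W](1-s_1)(1-s_2)$. Dualizing via $s_j\mapsto s_j^{-1}$ gives $\sum_i(-1)^i[V_i^*]=W^*(1-s_1^{-1})(1-s_2^{-1})=W^*\frac{(1-s_1)(1-s_2)}{s_1s_2}$, and substituting back, then restoring the factor $\chi(s_1,s_2)$, proves the lemma.

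The main obstacle is essentially bookkeeping rather than conceptual: one must pin down the weight convention that produces the factor $s_1 s_2$ in the denominator (equivalently, that $\C[x,y]$ has character $\prod_i(1-s_i)^{-1}$ rather than $\prod_i(1-s_i^{-1})^{-1}$), and one must make the formal character manipulations rigorous. The latter is safe because each alternating sum is a genuine Laurent polynomial—the modules are finite-length and the resolutions finite—and the character map from finite-length $T$-equivariant $R$-modules to $\Z[s_1^{\pm1},s_2^{\pm1}]$ is injective, so the identity of formal characters in the completed ring descends to the asserted equality in $\Z[s_1^{\pm1},s_2^{\pm1}]$.
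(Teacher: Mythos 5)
Your proof is correct. The survey states this lemma without proof, citing \cite[Prop.~4.1]{GK1}; your argument --- reduce to the affine chart (noting the $\ext$-sheaves have finite support so the local-to-global spectral sequence degenerates), strip off the twist by $\chi(s_1,s_2)$, and evaluate the alternating sum via a finite $T$-equivariant free resolution $F_\bull\to M_W$ whose alternating character is $[M_W](1-s_1)(1-s_2)$, then dualize via $s_j\mapsto s_j^{-1}$ --- is the standard ``vertex'' computation and matches the approach of the cited reference.
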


Using the method described in this section, we determined the universal functions $A_i(t,q)$ up to order $q^{30}$. For instance, the first few coefficients of $A_7(t,tq)$ are
{\scriptsize{
\begin{align*}
A_7(t,tq) =&\, 1+ \left( 24\,t-\frac{6}{t} \right) q+ \left( 360\,{t}^{2}-180+\frac{30}{t^2}-\frac{9}{4 t^4}+{\frac {3}{32\,{t}^{6}}} \right) {q}^{2} \\
&+ \left( 
4160\,{t}^{3}-3200\,t+\frac{1020}{t}-\frac{210}{t^3}+{\frac {135}{4\,{t}
^{5}}}-{\frac {55}{16\,{t}^{7}}}+{\frac {5}{32\,{t}^{9}}} \right) {q}^{3} \\
&+ \left( 40560\,{t}^{4}-43380\,{t}^{2}+20280-\frac{6480}{t^2}+{\frac 
{7065}{4\,{t}^{4}}}-{\frac {6255}{16\,{t}^{6}}}+{\frac {975}{16\,{t}^{
8}}}-{\frac {735}{128\,{t}^{10}}}+{\frac {495}{2048\,{t}^{12}}}
 \right) {q}^{4} +O(q^5).
\end{align*}
}}

Atiyah-Bott localization can also be used to express $\sfZ_S(a_1,a_2,t,q)$ in terms of the Nekrasov partition function with one fundamental matter and one adjoint matter. This is worked out in \cite[App.~B]{GK1}. This may provide a first step towards an approach to Conjecture \ref{conj:evir:rk2} along the lines of \cite{GNY3}.

{\tt{gottsche@ictp.it, m.kool1@uu.nl}}
\end{document}